\theoremstyle{plain}
\newtheorem{thm}{Theorem}[section]
\newtheorem{prop}[thm]{Proposition}
\newtheorem{cor}[thm]{Corollary}
\newtheorem{lem}[thm]{Lemma}
\newtheorem{prob}[thm]{Problem}
\newtheorem{thrm}{Theorem}
\theoremstyle{definition}
\newtheorem{dfn}[thm]{Definition}
\newtheorem{exa}[thm]{Example}
\newtheorem{rem}[thm]{Remark}
\def\dim{\mathop{\mathrm{dim}}\nolimits}
\def\det{\mathop{\mathrm{det}}\nolimits}
\def\Im{\mathop{\mathrm{Im}}\nolimits}
\def\Ker{\mathop{\mathrm{Ker}}\nolimits}
\def\Coker{\mathop{\mathrm{Coker}}\nolimits}
\def\diag{\mathop{\mathrm{diag}}\nolimits}
\def\sgn{\mathop{\mathrm{sgn}}\nolimits}
\def\Cyc{\mathop{\mathrm{Cyc}}\nolimits}
\def\triv{\mathop{\mathrm{\mathbf{triv}}}\nolimits}
\def\Res{\mathop{\mathrm{Res}}\nolimits}
\newcommand{\C}{\mathbf{C}}
\newcommand{\Z}{\mathbf{Z}}
\newcommand{\Q}{\mathbf{Q}}
\newcommand{\M}{\mathcal{M}}
\newcommand{\Lg}{\mathcal{L}}
\newcommand{\A}{\mathcal{A}}
\newcommand{\h}{\mathfrak{h}}
\newcommand{\gm}{\gamma}
\newcommand{\mtau}{{\tau'_{k,\Q}}^{\hspace{-3.5mm}\M}}
\def\triv{\mathop{\mathrm{\mb{triv}}}\nolimits}
\def\Cyc{\mathop{\mathrm{Cyc}}\nolimits}
\def\pr{\mathop{\mathrm{pr}}\nolimits}
\def\cont{\mathop{\mathrm{cont}}\nolimits}
\def\LR{\mathop{\mathrm{LR}}\nolimits}
\def\GL{\mathop{\mathrm{GL}}\nolimits}
\def\Sp{\mathop{\mathrm{Sp}}\nolimits}
\def\Mob{\mathop{\text{M\"{o}b}}\nolimits}
\def\maj{\mathop{\mathrm{maj}}\nolimits}
\newcommand{\isoto}[1][]%
{{\mathop{\buildrel{\sim}\over\longrightarrow}\limits_{#1}}}
\newcommand{\mf}[1]{{\mathfrak{#1}}}
\newcommand{\mb}[1]{{\mathbf{#1}}}
\newcommand{\bb}[1]{{\mathbb{#1}}}
\title{\textbf{New series in the Johnson cokernels of the mapping class groups of surfaces}}
\author{
Naoya Enomoto\footnote{Department of Mathematics, Kyoto University, e-mail: enomoto@math.kyoto-u.ac.jp}
 \ and \ Takao Satoh\footnote{Department of Mathematics, Kyoto University, e-mail: takao@math.kyoto-u.ac.jp}
}
\date{\empty}
\begin{document}
\maketitle
\vspace{-2em}
\begin{center}
{\large \textsf{Dedicated to the memory of Midori Kato}}
\end{center}
\begin{abstract}
Let $\Sigma_{g,1}$ be a compact oriented surface of genus $g$ with one boundary component, and $\M_{g,1}$ its mapping class group.
Morita showed that the image of the $k$-th Johnson homomorphism $\tau_k^{\M}$ of $\M_{g,1}$ is contained in
the kernel $\h_{g,1}(k)$ of an $\Sp$-equivariant surjective homomorphism $H \otimes_\Z \Lg_{2g}(k+1) \to \Lg_{2g}(k+2)$,
where $H:= H_1(\Sigma_{g,1},\Z)$ and $\Lg_{2g}(k)$ is the degree $k$-part of the free Lie algebra $\Lg_{2g}$ generated by $H$.

In this paper, we study the $\Sp$-module structure of the cokernel $\h_{g,1}^{\Q}(k)/\mathrm{Im}(\tau_{k,\Q}^{\M})$ of the rational Johnson homomorphism
$\tau_{k,\Q}^{\M} := \tau_k^{\M} \otimes \mathrm{id}_{\Q}$ where $\h_{g,1}^{\Q}(k):= \h_{g,1}(k) \otimes_{\Z} \Q$.
In particular, we show that the irreducible $\Sp$-module corresponding to a partition $[1^k]$ appears in the $k$-th Johnson cokernel
for any $k \equiv 1$ $\pmod{4}$ and $k \geq 5$ with multiplicity one. We also give a new proof of the fact due to Morita that
the irreducible $\Sp$-module corresponding to a partition $[k]$ appears in the Johnson cokernel with multiplicity one for odd $k \geq 3$.

The strategy of the paper is to give explicit descriptions of maximal vectors with highest weight $[1^k]$ and $[k]$ in the Johnson cokernel.
Our construction is inspired by the Brauer-Schur-Weyl duality between $\Sp(2g,\Q)$ and the Brauer algebras, and our previous work
for the Johnson cokernel of the automorphism group of a free group.
\end{abstract}

\tableofcontents
\newpage

\section{Introduction}
Dennis Johnson established a new remarkable method to investigate
the group structure of the mapping class group of a surface and the Torelli group
in a series of his pioneer works \cite{Jo1}, \cite{Jo2}, \cite{Jo3} and \cite{Jo4} in 1980's.
Especially, he gave a finite set of generators of the Torelli group, and constructed a homomorphism $\tau$
to determine the abelianization of the Torelli group.
Now, his homomorphism $\tau$ is called the first Johnson homomorphism, and it
is generalized to the Johnson homomorphisms of higher degrees.
Over the last two decades, the study of the Johnson homomorphisms of the mapping class group has achieved a good progress
by many authors including Morita \cite{Mo1}, Hain \cite{Hai} and so on.

\vspace{0.5em}

To put it plainly, the Johnson homomorphism are used to describe \lq\lq one by one approximations" of the Torelli group as follows.
To explain it, let us fix some notations.
For a compact oriented surface $\Sigma_{g,1}$ of genus $g$ with one boundary component, let $\M_{g,1}$ be its mapping class group.
Namely, $\M_{g,1}$ is a group of isotopy classes of orientation-preserving diffeomorphisms of $\Sigma_{g,1}$ which fix the boundary component
pointwise. The fundamental group $\pi_1(\Sigma_{g,1}, *)$ of $\Sigma_{g,1}$ is isomorphic to a free group $F_{2g}$ of rank $2g$.
In this paper we fix an isomorphism $\pi_1(\Sigma_{g,1}, *) \cong F_{2g}$.
Let $\Gamma_{2g}(k)$ be the lower central series of $F_{2g}$ beginning with $\Gamma_{2g}(1) = F_{2g}$,
and set $\mathcal{L}_{2g}(k) := \Gamma_{2g}(k)/\Gamma_{2g}(k+1)$.
For each $k \geq 1$ let $\M_{g,1}(k)$ be a normal subgroup of $\M_{g,1}$ consisting of elements which act $F_{2g}/\Gamma_{2g}(k+1)$ trivially.
Then we have a descending filtration
\[ \M_{g,1}(1) \supset \M_{g,1}(2) \supset \cdots \supset \M_{g,1}(k) \supset \cdots \]
of $\M_{g,1}$ such that the first term $\M_{g,1}(1)$ is just the Torelli group $\mathcal{I}_{g,1}$. This filtration is called the Johnson
filtration of $\M_{g,1}$.
Set $\mathrm{gr}^k(\M_{g,1}) := \M_{g,1}(k)/\M_{g,1}(k+1)$ for each $k \geq 1$. Then each of $\mathrm{gr}^k(\M_{g,1})$ is an
$\mathrm{Sp}(2g,\Z)$-equivariant free abelian group of finite rank, and they are considered as one by one approximations of the Torelli group.
Although to clarify the $\mathrm{Sp}(2g,\Z)$-module structure of each of $\mathrm{gr}^k(\M_{g,1})$ plays an important role on various studies of the
Torelli group, even to determine its rank is quite a difficult problem in general.

\vspace{0.5em}

In order to study each graded quotients $\mathrm{gr}^k(\M_{g,1})$, the Johnson homomorphisms
\[ \tau_k^{\M} : \mathrm{gr}^k(\M_{g,1}) \hookrightarrow H^* \otimes_{\Z} \mathcal{L}_{2g}(k+1) \]
of $\M_{g,1}$ are valuable tools where $H:=H_1(\Sigma_{g,1},\Z)$ and $H^* :=\mathrm{Hom}_{\Z}(H,\Z)$. Here we remark that $H^*$ is canonically
isomorphic to $H$ by the Poincar\'{e} duality.
In general, the $k$-th Johnson homomorphism is denoted by $\tau_k$ simply. In this paper, however, to distinguish the Johnson homomorphism of
the mapping class group from that of the automorphism group of a free group, we attach a subscript $\M$ to that of the mapping class group.
(See Subsection {\rmfamily \ref{Ss-John}} for details.) Since each of $\tau_k^{\M}$ is an $\Sp(2g,\Z)$-equivariant injective homomorphism,
to determine the image $\mathrm{Im}(\tau_k^{\M})$ of $\tau_k^{\M}$
is one of the most basic problems. In particular, from a representation theoretic view, it is important to clarify the irreducible
decomposition of $\mathrm{Im}(\tau_{k,\Q}^{\M})$ as an $\Sp(2g,\Q)$-module where
$\tau_{k,\Q}^{\M} := \tau_{k}^{\M} \otimes \mathrm{id}_{\Q}$. In the following, the subscript $\Q$ always means tensoring with $\Q$ over $\Z$.
Now, we have $\mathrm{Im}(\tau_1^{\M}) \cong \Lambda^3 H$ due to Johnson \cite{Jo1}.
Furthermore the $\Sp(2g,\Q)$-module structure of $\mathrm{Im}(\tau_{k,\Q}^{\M})$ are completely determined for
$1 \leq k \leq 4$. (See a table in Subsection {\rmfamily \ref{Ss-John}}.)

\vspace{0.5em}

On the other hand, Morita \cite{Mo1} began to study the Johnson images systematically, and gave many remarkable results.
Here we recall some of them. First, Morita \cite{Mo1} showed that $\mathrm{Im}(\tau_k^{\M})$ is contained in the kernel $\h_{g,1}(k)$
of $H \otimes_{\Z} \mathcal{L}_{2g}(k+1) \rightarrow \mathcal{L}_{2g}(k+2)$ for any $k \geq 2$. (See Subsection {\rmfamily \ref{Ss-John}}.)
Second, he also showed that $\mathrm{Im}(\tau_k^{\M})$ does not coincide with $\h_{g,1}(k)$ in general.
Namely, the Johnson homomorphism $\tau_{k}^{\M} : \mathrm{gr}^k(\M_{g,1}) \hookrightarrow \h_{g,1}(k)$ is not surjective in general.
More precisely, he constructed an $\Sp(2g,\Q)$-equivariant surjective homomorphisms
\[ \mathrm{Tr}_k : \h_{g,1}^{\Q}(k) \rightarrow S^k H_{\Q} \]
such that $\mathrm{Tr}_k \circ \tau_{k,\Q}^{\M} \equiv 0$ for any odd $k \geq 3$ using the Magnus representation of $\M_{g,1}$.
Here $S^k H_{\Q}$ is the symmetric tensor product of $H_{\Q}$ of degree $k$, and is isomorphic to the irreducible $\Sp(2g,\Q)$-module with highest weight $[k]$.
Hence $S^k H_{\Q}$ appears in the irreducible decomposition
of the cokernel $\mathrm{Coker}(\tau_{k,\Q}^{\M}) := \h_{g,1}^{\Q}(k)/\mathrm{Im}(\tau_{k,\Q}^{\M})$ for odd $k \geq 3$.
We should remark that throughout the paper $\mathrm{Coker}(\tau_{k,\Q}^{\M})$ denotes $\h_{g,1}^{\Q}(k)/\mathrm{Im}(\tau_{k,\Q}^{\M})$,
not $H_{\Q}^* \otimes_{\Q} \mathcal{L}_{2g}^{\Q}(k+1) / \mathrm{Im}(\tau_{k,\Q}^{\M})$.
Now, the map $\mathrm{Tr}_k$ is called the Morita trace, and $S^k H_{\Q}$ the Morita obstruction. Here the term \lq\lq obstruction" means
an obstruction for the surjectivity of the Johnson homomorphism $\tau_{k,\Q}^{\M}$.
We also remark that Hiroaki Nakamura, partially Asada and Nakamura \cite{AN},
showed that the multiplicity of $S^k H_{\Q}$ in $\mathrm{Coker}(\tau_{k,\Q}^{\M})$ is exactly one in his unpublished work.

\vspace{0.5em}

From results for the irreducible decomposition of $\mathrm{Coker}(\tau_{k,\Q}^{\M})$ for low degrees, 
it seems that the number of the irreducible components in $\mathrm{Coker}(\tau_{k,\Q}^{\M})$ grows rapidly as degree increases.
At the present stage, however, there are few results for obstructions other than the Morita obstruction for a general degree $k$.
Thus, to establish a new method to detect a non-trivial irreducible component in $\mathrm{Coker}(\tau_k^{\M})$ other than the Morita obstruction
is an important problem in the study of the Johnson homomorphisms.

\vspace{0.5em}

The main purpose of the paper is to detect new series of obstructions in the Johnson cokernels. 
To state our theorem, we will use the following notations. First, we remark that for each $k \geq 1$
the symmetric group $\mf{S}_{k+2}$ of degree $k+2$ naturally acts on
the space $H_\Q^{\otimes{k+2}}$ from the right as a permutation of the components.
For each $1 \le i \le k+1$, denote by $s_i \in \mf{S}_{k+2}$
the adjacent transposition between $i$ and $i+1$, and by $\sigma_{k+2}$ the cyclic permutation $s_{k+1}s_k \cdots s_2s_1$.
Let $P$ be a subgroup of $\mf{S}_{k+2}$ which fixes $1$. The group $P$ is isomorphic to $\mf{S}_{k+1}$.
The Dynkin-Specht-Wever element $\theta_P$ for $P$ in the group algebra $\Q\mf{S}_{k+2}$ is defined to be
\[
\theta_P:=(1-s_2)(1-s_3s_2) \cdots (1-s_{k+1}s_k \cdots s_2).
\]
Our main theorem is
\begin{thrm}($=$ Theorem {\rmfamily \ref{mt2}}.)
Suppose $k \equiv 1 \ (\mathrm{mod} \, 4)$, $k \geq 5$ and $g \ge k+2$. An element 
\begin{eqnarray*}
\varphi_{[1^k]}&:=&(\omega \otimes (e_1\wedge \cdots \wedge e_k)) \cdot \theta_P \cdot (1+\sigma_{k+2}+ \cdots +\sigma_{k+2}^{k+1})%\\
\end{eqnarray*}
is an $\Sp$-maximal vector of weight $[1^k]$ in $\mf{h}_{g,1}^{\Q}(k)$.
Moreover this gives a unique $\Sp$-irreducible component with highest weight $[1^k]$ in $\Coker{\tau_{k,\Q}^{\M}}$.
\end{thrm}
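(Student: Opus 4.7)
The plan is to verify the three claims of the theorem in turn: membership of $\varphi_{[1^k]}$ in $\mf{h}_{g,1}^{\Q}(k)$, the maximal-vector property of weight $[1^k]$, and multiplicity-one in the cokernel.

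First I would check that $\varphi_{[1^k]}$ genuinely lies in $\mf{h}_{g,1}^{\Q}(k) \subset H_\Q \otimes \Lg_{2g}^{\Q}(k+1)$. The element $\omega$ is the symplectic invariant in $H_\Q^{\otimes 2}$, so $\omega \otimes (e_1 \wedge \cdots \wedge e_k)$ sits in $H_\Q^{\otimes(k+2)}$. Since $P \cong \mf{S}_{k+1}$ fixes the first tensor slot, the Dynkin--Specht--Wever element $\theta_P$ acts only on the last $k+1$ slots; by the classical Dynkin--Specht--Wever theorem it projects $H_\Q^{\otimes(k+1)}$ onto $\Lg_{2g}^{\Q}(k+1)$ (up to the scalar $k+1$). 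Hence $(\omega \otimes (e_1 \wedge \cdots \wedge e_k)) \cdot \theta_P \in H_\Q \otimes \Lg_{2g}^{\Q}(k+1)$. The cyclic average $1+\sigma_{k+2}+\cdots+\sigma_{k+2}^{k+1}$ is exactly the standard symmetrization whose image, upon composition with the bracket $H_\Q \otimes \Lg_{2g}^{\Q}(k+1) \to \Lg_{2g}^{\Q}(k+2)$, vanishes by a Jacobi-type cancellation; this is the same mechanism that produces elements of $\mf{h}_{g,1}$ from arbitrary tensors in the Brauer/Schur--Weyl construction.

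Next I would verify that $\varphi_{[1^k]}$ is a nonzero maximal vector of weight $[1^k]$. The weight assertion is immediate from weight-bookkeeping: $\omega$ has weight $0$ under the diagonal torus of $\Sp(2g,\Q)$, while $e_1 \wedge \cdots \wedge e_k$ carries the extreme weight $(1^k,0^{g-k})$; since $\theta_P$ and the cyclic average are $\Sp$-equivariant, the whole element has weight $[1^k]$. The vector $e_1 \wedge \cdots \wedge e_k$ is annihilated by the positive root operators of $\Sp(2g,\Q)$ (using that $g \ge k$ and hence we avoid the contractions through $\omega$ that would require indices beyond those present), and the same holds for $\omega \otimes (e_1 \wedge \cdots \wedge e_k)$. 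Equivariance of $\theta_P$ and of the cyclic average then upgrades this to maximality of $\varphi_{[1^k]}$. The more delicate point is non-vanishing: one expands $\varphi_{[1^k]}$ in the tensor basis and exhibits a coefficient that is nonzero. This is where the hypothesis $k \equiv 1 \pmod 4$ enters — the cyclic sum of length $k+2$, composed with the antisymmetrization from $e_1 \wedge \cdots \wedge e_k$ and with $\theta_P$, will have a sign-pattern that adds rather than cancels precisely under this congruence (analogous to the sign condition in the authors' companion work on $\Aut F_n$). I expect this combinatorial non-vanishing computation, likely carried out by evaluating at a judiciously chosen monomial, to be the technical heart of the argument.

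Finally, for the uniqueness and the cokernel statement, I would combine a dimension count with the known structure of $\Im(\tau_{k,\Q}^{\M})$. Using Brauer--Schur--Weyl duality between $\Sp(2g,\Q)$ and the Brauer algebra, the multiplicity of $[1^k]$ in $H_\Q \otimes \Lg_{2g}^{\Q}(k+1)$ (and therefore in $\mf{h}_{g,1}^{\Q}(k)$, once the bracket is subtracted) is computable from the $\mf{S}_{k+2}$-module structure of $\Lg_{2g}^{\Q}(\cdot)$ via Klyachko's formula for the character of the free Lie algebra; combined with the branching from $\GL(2g)$ to $\Sp(2g)$ this yields that the $[1^k]$-isotypic component of $\mf{h}_{g,1}^{\Q}(k)$ has multiplicity exactly one for $k \equiv 1 \pmod 4$, $k \ge 5$. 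Since $\varphi_{[1^k]}$ is a nonzero maximal vector of that weight, it spans this isotypic piece. The last step is to show $\varphi_{[1^k]} \notin \Im(\tau_{k,\Q}^{\M})$: this follows by constructing (or invoking) an $\Sp$-equivariant functional on $\mf{h}_{g,1}^{\Q}(k)$ that vanishes on the image but is nonzero on $\varphi_{[1^k]}$, in the spirit of the Morita trace; equivalently, by checking that $\Im(\tau_{k,\Q}^{\M})$ contains no $[1^k]$-component in this degree, using known upper bounds for the image. This non-triviality in the cokernel, together with multiplicity one in $\mf{h}_{g,1}^{\Q}(k)$, gives the second conclusion of the theorem.
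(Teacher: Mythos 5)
Your outline follows the same broad skeleton as the paper, but it misses the specific mechanism that makes the cokernel statement provable at all, and it treats as two independent problems what the paper collapses into one. The paper's argument is built around a single $\Sp$-equivariant map
\[
c_k : \mf{h}_{g,1}^{\Q}(k) \longrightarrow \mathcal{C}_{2g}^{\Q}(k),
\]
obtained by composing the inclusion into $H_\Q \otimes \Lg_{2g}^\Q(k+1)$, the Poincar\'e-duality isomorphism with $H_\Q^* \otimes \Lg_{2g}^\Q(k+1)$, the contraction to $H_\Q^{\otimes k}$, and the quotient by the cyclic action. The crucial structural inputs are (a) Hain's theorem, giving $\Im(\tau_{k,\Q}^\M) = \Im({\tau'_{k,\Q}}^\M) \subset \Im(\tau'_{k,\Q})$, and (b) the earlier $\Aut F_n$ result $\Coker(\tau'_{k,\Q}) \cong \mathcal{C}_{2g}^\Q(k)$ for $2g \geq k+2$, which together force $\Im(\tau_{k,\Q}^\M) \subset \Ker(c_k)$. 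Once one knows this, the whole theorem reduces to a single computation: show $c_k(\varphi_{[1^k]}) \neq 0$. That one inequality proves simultaneously that $\varphi_{[1^k]} \neq 0$, that it is not in $\Im(\tau_{k,\Q}^\M)$, and (combined with the multiplicity-one count in $\mf{h}_{g,1}^\Q(k)$, your Brauer--Schur--Weyl/branching step, which matches the paper's Proposition on multiplicities) that it exhausts the $[1^k]$-isotypic component. Your proposal instead splits these into ``exhibit a nonzero coefficient of $\varphi_{[1^k]}$ in a tensor basis'' and, separately, ``construct an $\Sp$-equivariant functional vanishing on the image, in the spirit of the Morita trace.'' The second of these is $c_k$ in disguise, but you do not say why such a functional exists or why it kills $\Im(\tau_{k,\Q}^\M)$; without invoking Hain plus the $\Aut F_n$ cokernel computation, there is no reason a Morita-trace-like map with these properties should be available. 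Your alternative fallback --- ``checking that $\Im(\tau_{k,\Q}^\M)$ contains no $[1^k]$-component'' --- is essentially the conclusion one is trying to prove and would not be easier to establish directly.

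A second, smaller gap: membership of $\varphi_{[1^k]}$ in $\mf{h}_{g,1}^\Q(k)$ is not just a ``Jacobi-type cancellation''; it requires a genuine lemma (the paper's Corollary to the Dynkin--Specht--Wever discussion) showing $v \cdot \theta_P (1 + \sigma_{k+2} + \cdots + \sigma_{k+2}^{k+1}) \in \mf{h}_{g,1}^\Q(k)$ for every $v \in H_\Q^{\otimes(k+2)}$, proved by rewriting the cyclic average as a sum of iterated brackets. Finally, the actual verification $c_k(\varphi_{[1^k]}) \neq 0$ is a four-step binomial-coefficient computation (expanding $\theta_P$, applying the cyclic average, applying the contraction $\cont_k$, and summing the resulting alternating binomials, which vanish, leaving the isolated $-2g$-term that survives). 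Your proposal to ``evaluate at a judiciously chosen monomial'' is a plausible alternative route to nonvanishing, but it would not by itself certify nonmembership in the image --- which is precisely why the paper uses $c_k$ rather than a raw coefficient extraction.
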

In addition to this, we also give a new proof of the fact that the Morita obstruction uniquely appears in $\mathrm{Coker}(\tau_k^{\M})$ for odd $k \geq 3$,
due to Morita \cite{Mo1} and Nakamura. (See Theorem {\rmfamily \ref{mt1}}.)

\vspace{0.5em}

In order to prove these, we use two key facts. The first one is a remarkable work with respect to $\mathrm{gr}^k(\M_{g,1})$ due to Hain \cite{Hai}.
In general, the graded sum $\mathrm{gr}(\M_{g,1}) := \oplus_{k \geq 1} \mathrm{gr}^k(\M_{g,1})$ has a Lie algebra structure induced from the commutator
bracket of $\mathcal{I}_{g,1}$.
In \cite{Hai}, Hain showed that the Lie algebra $\mathrm{gr}_{\Q}(\M_{g,1})$ is generated by the degree one part $\mathrm{gr}_{\Q}^1(\M_{g,1})$
as a Lie algebra. This shows the following. Let $\M_{g,1}'(k)$ be the lower central series of $\mathcal{I}_{g,1}$ and set
$\mathrm{gr}^k(\M_{g,1}') := \M_{g,1}'(k)/\M_{g,1}'(k+1)$. Then we can define the Johnson homomorphism
like homomorphism
\[ {\tau'_{k}}^{\M} : \mathrm{gr}^k(\M_{g,1}') \rightarrow \h_{g,1}(k). \]
(See Subsection {\rmfamily \ref{Ss-John}}.) Then Hain's result above induces $\mathrm{Im}(\tau_{k,\Q}^{\M})= \Im(\mtau)$ for any $k \geq 1$.

\vspace{0.5em}

The second is our previous result for the cokernel of the Johnson homomorphism of the automorphism group of a free group.
By a classical work of Dehn and Nielsen, it is known that a natural homomorphism $\M_{g,1} \rightarrow \mathrm{Aut}\,F_{2g}$ induced from the action
of $\M_{g,1}$ of the fundamental group $\pi_1(\Sigma_{g,1},*) \cong F_{2g}$ is injective. Namely, we can consider $\M_{g,1}$ as a subgroup of
$\mathrm{Aut}\,F_{2g}$. From this view point, we can apply results for the Johnson homomorphisms of $\mathrm{Aut}\,F_{2g}$ to the study of that of $\M_{g,1}$.
For any $n \geq 2$, in general, a subgroup $\mathrm{IA}_n$ consisting of automorphisms of a free group $F_n$ which acts on $H_1(F_n,\Z)$ trivially is called
the IA-automorphism group of $F_n$. Let $\mathcal{A}_n'(k)$ be the lower central series of $\mathrm{IA}_n$, and set
$\mathrm{gr}^k(\mathcal{A}_n') := \mathcal{A}_n'(k)/\mathcal{A}_n'(k+1)$ for any $k \geq 1$. Then we can define the Johnson homomorphism
$\tau_k' : \mathrm{gr}^k(\mathcal{A}_n') \rightarrow H^* \otimes_{\Z} \mathcal{L}_n(k+1)$ for each $k \geq 1$.
Then, in our paper \cite{S11}, we showed that for $k \geq 2$ and $n \geq k+2$,
\[ \mathrm{Coker}(\tau_{k,\Q}') \cong \mathcal{C}_n^{\Q}(k) \]
where $\mathcal{C}_n(k) := H^{\otimes k} / \langle a_1 \otimes \cdots \otimes a_k - a_2 \otimes \cdots \otimes a_k \otimes a_1 \,|\, a_i \in H \rangle$.
(See Subsection {\rmfamily \ref{Ss-John}} for details.)

\vspace{0.5em}

In our previous paper \cite{ES}, we gave the irreducible decomposition of $\mathrm{Coker}(\tau_{k,\Q}') \cong \mathcal{C}_n^{\Q}(k)$
as a $\mathrm{GL}(n,\Q)$-module.
Especially, we showed that $S^k H_{\Q}$, which is also called the Morita obstruction, appears in $\mathrm{Coker}(\tau_{k,\Q}')$ with multiplicity one
for any $k \geq 2$, and that $\Lambda^k H_{\Q}$ appears with multiplicity one for odd $k \geq 3$.

\vspace{0.5em}

We remark that, as a $\mathrm{GL}(n,\Q)$-module, $\mathcal{C}_n^{\Q}(k)$ is isomorphic to the invariant part $a_n(k):=(H_{\Q}^{\otimes k})^{\Cyc_k}$ of $H_{\Q}^{\otimes k}$
by the action of $\Cyc_k$. Namely, the cokernel $\mathrm{Coker}(\tau_{k, \Q}')$ is isomorphic to Kontsevich's
$a_n(k)$ as a $\mathrm{GL}(n,\Q)$-module. We also remark that in our notation $a_n(k)$ is considered for any $n \geq 2$ in constrast to Kontsevich's notation
for even $n=2g$. (See \cite{Kon1} and \cite{Kon2}.)

\vspace{0.5em}

Combining Hain's result above and the fact $\mathrm{Coker}(\tau_{k,\Q}') \cong \mathcal{C}_n^{\Q}(k)$ for $n \geq k+2$, we can establish a new method to
detect non-trivial $\Sp$-irreducible components in $\mathrm{Coker}(\tau_k^{\M})$. (For more details, see Section {\ref{sec:str}}.)
The present paper produces the first successful results for the use of such method.

\vspace{0.5em}

{\bf Note added}:
After we wrote this paper, Professor Hiroaki Nakamura told us about the following
personal communication. 
In 1996, in his mail to Professor Shigeyuki Morita, he mentioned that, for $1 \leq k \leq 3$, an $\Sp$-module $[1^{4k+1}]$ appears in $\mf{h}_{g,1}(k)$ with multiplicity one,
based on his explicit calculation in \cite{NT}. And he conjectured that these
$\Sp$-irreducible components $[1^{4k+1}]$ survive in the Johnson
cokernel.

\section{Notations}

Throughout the paper, we use the following notations. Let $G$ be a group and $N$ a normal subgroup of $G$.
\begin{itemize}
\item The binomial coefficient $\binom{n}{r}$ is denoted by ${}_{n}C_{r}$.
\item For any real number $x$, we set $\lfloor x \rfloor := \mathrm{max} \{ n \in \Z \,|\, n \leq x \}$.
\item For any integer $p$, set
\[ \delta_{p \equiv a \, (\mathrm{mod}\, m)} := \begin{cases}
                                                 1 \hspace{1em} & \mathrm{if} \hspace{1em} p \equiv a \, (\mathrm{mod}\, m), \\
                                                 0 \hspace{1em} & \mathrm{if} \hspace{1em} \mathrm{otherwise}.
                                              \end{cases}\]
\item The automorphism group $\mathrm{Aut}\,F_n$ of $F_n$ acts on $F_n$ from the right unless otherwise noted.
      For any $\sigma \in \mathrm{Aut}\,F_n$ and $x \in F_n$, the action of $\sigma$ on $x$ is denoted by $x^{\sigma}$.
\item For an element $g \in G$, we also denote the coset class of $g$ by $g \in G/N$ if there is no confusion.
\item For elements $x$ and $y$ of $G$, the commutator bracket $[x,y]$ of $x$ and $y$
      is defined to be $[x,y]:=xyx^{-1}y^{-1}$.
\item For elements $g_1, \ldots, g_k \in G$, a left-normed commutator
\[ [[ \cdots [[ g_{1},g_{2}],g_{3}], \cdots ], g_{k}] \]
of weight $k$ is denoted by $[g_{i_1},g_{i_2}, \cdots, g_{i_k}]$.
\item For any $\Z$-module $M$ and a commutative ring $R$, we denote $M \otimes_{\Z} R$ by the symbol obtained by attaching a subscript $R$ to $M$, like
      $M_{R}$ or $M^{R}$. Similarly, for any $\Z$-linear map $f: A \rightarrow B$,
      the induced $R$-linear map $A_{R} \rightarrow B_{R}$ is denoted by $f_{R}$
      or $f^{R}$.
\item For a semisimple $G$-module $M$ and an irreducible $G$-module $N$, we denote by $[N:M]$ the multiplicity of $N$ in the irreducible decomposition
of $M$.
\end{itemize}

\section{Johnson homomorphisms of the mapping class groups and the automorphism group of free groups}

\subsection{Mapping class groups of surfaces}

Here we recall some properties of the mapping class groups of surfaces.
For any integer $g \geq 1$, let $\Sigma_{g,1}$ be the compact oriented surface of genus $g$ with one boundary component.
We denote by $\M_{g,1}$ the mapping class group of ${\Sigma}_{g,1}$. Namely, $\M_{g,1}$ is the group of isotopy classes of
orientation preserving diffeomorphisms of ${\Sigma}_{g,1}$ which fix the boundary pointwise. \\
\quad The mapping class group $\M_{g,1}$ has an important normal subgroup called the Torelli group.
Let $\mu_{\M} : \M_{g,1} \rightarrow \mathrm{Aut}(H_1(\Sigma_{g,1},\Z))$ be the classical representation of $\M_{g,1}$
induced from the action of $\M_{g,1}$ on the integral first homology group $H_1(\Sigma_{g,1},\Z)$ of $\Sigma_{g,1}$.
The kernel of $\mu_{\M}$ is called the Torelli group, denoted by $\mathcal{I}_{g,1}$. Namely, $\mathcal{I}_{g,1}$ consists of mapping
classes of $\Sigma_{g,1}$ which act on $H_1(\Sigma_{g,1},\Z)$ trivially. \\
\quad Let us observe the image of $\mu_{\M}$. Take a base point $*$ of $\Sigma_{g,1}$ on the boundary.
Then the fundamental group $\pi_1(\Sigma_{g,1}, *)$ of $\Sigma_{g,1}$ is a free group of rank $2g$.
We fix a basis $x_1, \ldots, x_{2g}$ of $\pi_1(\Sigma_{g,1},*)$ as shown Figure 1.

\begin{figure}[h]
\[
%WinTpicVersion3.08
\unitlength 0.1in
\begin{picture}( 55.0000, 18.0000)(  9.0000,-22.0000)
% LINE 2 0 3 0
% 2 1800 400 6200 400
% 
\special{pn 8}%
\special{pa 1800 400}%
\special{pa 6200 400}%
\special{fp}%
% LINE 2 0 3 0
% 2 1800 2200 6200 2200
% 
\special{pn 8}%
\special{pa 1800 2200}%
\special{pa 6200 2200}%
\special{fp}%
% ELLIPSE 2 0 3 0
% 4 1800 1300 2700 2200 1800 303 1800 2229
% 
\special{pn 8}%
\special{ar 1800 1300 900 900  1.5707963 4.7123890}%
% ELLIPSE 2 0 3 0
% 4 1760 1270 1920 1540 1760 780 1760 1500
% 
\special{pn 8}%
\special{ar 1760 1270 160 270  1.5707963 4.7123890}%
% ELLIPSE 2 0 3 0
% 4 1660 1260 1740 1450 1660 1560 1660 740
% 
\special{pn 8}%
\special{ar 1660 1260 80 190  4.7123890 6.2831853}%
\special{ar 1660 1260 80 190  0.0000000 1.5707963}%
% ELLIPSE 2 0 3 0
% 4 2760 1270 2920 1540 2760 780 2760 1500
% 
\special{pn 8}%
\special{ar 2760 1270 160 270  1.5707963 4.7123890}%
% ELLIPSE 2 0 3 0
% 4 2660 1260 2740 1450 2660 1560 2660 740
% 
\special{pn 8}%
\special{ar 2660 1260 80 190  4.7123890 6.2831853}%
\special{ar 2660 1260 80 190  0.0000000 1.5707963}%
% ELLIPSE 2 0 3 0
% 4 4960 1270 5120 1540 4960 780 4960 1500
% 
\special{pn 8}%
\special{ar 4960 1270 160 270  1.5707963 4.7123890}%
% ELLIPSE 2 0 3 0
% 4 4860 1260 4940 1450 4860 1560 4860 740
% 
\special{pn 8}%
\special{ar 4860 1260 80 190  4.7123890 6.2831853}%
\special{ar 4860 1260 80 190  0.0000000 1.5707963}%
% ELLIPSE 1 0 3 0
% 4 2040 1170 2260 1940 2040 2109 2040 63
% 
\special{pn 13}%
\special{ar 2040 1170 220 770  4.7123890 6.2831853}%
\special{ar 2040 1170 220 770  0.0000000 1.5707963}%
% LINE 1 0 3 0
% 2 2060 1940 6340 1940
% 
\special{pn 13}%
\special{pa 2060 1940}%
\special{pa 6340 1940}%
\special{fp}%
% ELLIPSE 1 0 3 0
% 4 3070 1170 3290 1940 3070 2109 3070 63
% 
\special{pn 13}%
\special{ar 3070 1170 220 770  4.7123890 6.2831853}%
\special{ar 3070 1170 220 770  0.0000000 1.5707963}%
% ELLIPSE 1 0 3 0
% 4 5240 1170 5460 1940 5240 2109 5240 63
% 
\special{pn 13}%
\special{ar 5240 1170 220 770  4.7123890 6.2831853}%
\special{ar 5240 1170 220 770  0.0000000 1.5707963}%
% SPLINE 1 0 3 0
% 8 2040 1940 1750 1740 1460 1240 1650 810 1910 920 2000 1510 2030 1940 2030 1940
% 
\special{pn 13}%
\special{pa 2040 1940}%
\special{pa 2014 1924}%
\special{pa 1986 1906}%
\special{pa 1960 1890}%
\special{pa 1932 1872}%
\special{pa 1906 1854}%
\special{pa 1880 1836}%
\special{pa 1852 1818}%
\special{pa 1826 1800}%
\special{pa 1802 1780}%
\special{pa 1776 1760}%
\special{pa 1750 1740}%
\special{pa 1726 1720}%
\special{pa 1702 1698}%
\special{pa 1678 1676}%
\special{pa 1656 1654}%
\special{pa 1634 1630}%
\special{pa 1612 1606}%
\special{pa 1592 1582}%
\special{pa 1574 1556}%
\special{pa 1556 1528}%
\special{pa 1538 1500}%
\special{pa 1524 1472}%
\special{pa 1510 1442}%
\special{pa 1496 1412}%
\special{pa 1486 1380}%
\special{pa 1476 1348}%
\special{pa 1470 1314}%
\special{pa 1464 1280}%
\special{pa 1460 1244}%
\special{pa 1460 1206}%
\special{pa 1460 1168}%
\special{pa 1462 1130}%
\special{pa 1468 1092}%
\special{pa 1474 1054}%
\special{pa 1484 1018}%
\special{pa 1494 982}%
\special{pa 1508 950}%
\special{pa 1522 918}%
\special{pa 1540 890}%
\special{pa 1560 866}%
\special{pa 1580 844}%
\special{pa 1604 828}%
\special{pa 1630 816}%
\special{pa 1658 810}%
\special{pa 1686 808}%
\special{pa 1718 810}%
\special{pa 1750 818}%
\special{pa 1780 830}%
\special{pa 1812 846}%
\special{pa 1842 864}%
\special{pa 1870 884}%
\special{pa 1896 906}%
\special{pa 1920 930}%
\special{pa 1940 956}%
\special{pa 1956 984}%
\special{pa 1972 1012}%
\special{pa 1982 1040}%
\special{pa 1992 1070}%
\special{pa 2000 1100}%
\special{pa 2006 1132}%
\special{pa 2008 1164}%
\special{pa 2012 1198}%
\special{pa 2012 1230}%
\special{pa 2012 1264}%
\special{pa 2012 1298}%
\special{pa 2010 1332}%
\special{pa 2008 1366}%
\special{pa 2006 1400}%
\special{pa 2004 1434}%
\special{pa 2002 1468}%
\special{pa 2000 1502}%
\special{pa 2000 1536}%
\special{pa 2000 1570}%
\special{pa 2000 1602}%
\special{pa 2002 1634}%
\special{pa 2004 1666}%
\special{pa 2006 1698}%
\special{pa 2008 1730}%
\special{pa 2010 1762}%
\special{pa 2014 1794}%
\special{pa 2016 1824}%
\special{pa 2020 1856}%
\special{pa 2024 1888}%
\special{pa 2028 1918}%
\special{pa 2030 1940}%
\special{sp}%
% SPLINE 1 0 3 0
% 8 3060 1940 2770 1740 2480 1240 2670 810 2930 920 3020 1510 3050 1940 3050 1940
% 
\special{pn 13}%
\special{pa 3060 1940}%
\special{pa 3034 1924}%
\special{pa 3006 1906}%
\special{pa 2980 1890}%
\special{pa 2952 1872}%
\special{pa 2926 1854}%
\special{pa 2900 1836}%
\special{pa 2872 1818}%
\special{pa 2846 1800}%
\special{pa 2822 1780}%
\special{pa 2796 1760}%
\special{pa 2770 1740}%
\special{pa 2746 1720}%
\special{pa 2722 1698}%
\special{pa 2698 1676}%
\special{pa 2676 1654}%
\special{pa 2654 1630}%
\special{pa 2632 1606}%
\special{pa 2612 1582}%
\special{pa 2594 1556}%
\special{pa 2576 1528}%
\special{pa 2558 1500}%
\special{pa 2544 1472}%
\special{pa 2530 1442}%
\special{pa 2516 1412}%
\special{pa 2506 1380}%
\special{pa 2496 1348}%
\special{pa 2490 1314}%
\special{pa 2484 1280}%
\special{pa 2480 1244}%
\special{pa 2480 1206}%
\special{pa 2480 1168}%
\special{pa 2482 1130}%
\special{pa 2488 1092}%
\special{pa 2494 1054}%
\special{pa 2504 1018}%
\special{pa 2514 982}%
\special{pa 2528 950}%
\special{pa 2542 918}%
\special{pa 2560 890}%
\special{pa 2580 866}%
\special{pa 2600 844}%
\special{pa 2624 828}%
\special{pa 2650 816}%
\special{pa 2678 810}%
\special{pa 2706 808}%
\special{pa 2738 810}%
\special{pa 2770 818}%
\special{pa 2800 830}%
\special{pa 2832 846}%
\special{pa 2862 864}%
\special{pa 2890 884}%
\special{pa 2916 906}%
\special{pa 2940 930}%
\special{pa 2960 956}%
\special{pa 2976 984}%
\special{pa 2992 1012}%
\special{pa 3002 1040}%
\special{pa 3012 1070}%
\special{pa 3020 1100}%
\special{pa 3026 1132}%
\special{pa 3028 1164}%
\special{pa 3032 1198}%
\special{pa 3032 1230}%
\special{pa 3032 1264}%
\special{pa 3032 1298}%
\special{pa 3030 1332}%
\special{pa 3028 1366}%
\special{pa 3026 1400}%
\special{pa 3024 1434}%
\special{pa 3022 1468}%
\special{pa 3020 1502}%
\special{pa 3020 1536}%
\special{pa 3020 1570}%
\special{pa 3020 1602}%
\special{pa 3022 1634}%
\special{pa 3024 1666}%
\special{pa 3026 1698}%
\special{pa 3028 1730}%
\special{pa 3030 1762}%
\special{pa 3034 1794}%
\special{pa 3036 1824}%
\special{pa 3040 1856}%
\special{pa 3044 1888}%
\special{pa 3048 1918}%
\special{pa 3050 1940}%
\special{sp}%
% SPLINE 1 0 3 0
% 8 5240 1940 4950 1740 4660 1240 4850 810 5110 920 5200 1510 5230 1940 5230 1940
% 
\special{pn 13}%
\special{pa 5240 1940}%
\special{pa 5214 1924}%
\special{pa 5186 1906}%
\special{pa 5160 1890}%
\special{pa 5132 1872}%
\special{pa 5106 1854}%
\special{pa 5080 1836}%
\special{pa 5052 1818}%
\special{pa 5026 1800}%
\special{pa 5002 1780}%
\special{pa 4976 1760}%
\special{pa 4950 1740}%
\special{pa 4926 1720}%
\special{pa 4902 1698}%
\special{pa 4878 1676}%
\special{pa 4856 1654}%
\special{pa 4834 1630}%
\special{pa 4812 1606}%
\special{pa 4792 1582}%
\special{pa 4774 1556}%
\special{pa 4756 1528}%
\special{pa 4738 1500}%
\special{pa 4724 1472}%
\special{pa 4710 1442}%
\special{pa 4696 1412}%
\special{pa 4686 1380}%
\special{pa 4676 1348}%
\special{pa 4670 1314}%
\special{pa 4664 1280}%
\special{pa 4660 1244}%
\special{pa 4660 1206}%
\special{pa 4660 1168}%
\special{pa 4662 1130}%
\special{pa 4668 1092}%
\special{pa 4674 1054}%
\special{pa 4684 1018}%
\special{pa 4694 982}%
\special{pa 4708 950}%
\special{pa 4722 918}%
\special{pa 4740 890}%
\special{pa 4760 866}%
\special{pa 4780 844}%
\special{pa 4804 828}%
\special{pa 4830 816}%
\special{pa 4858 810}%
\special{pa 4886 808}%
\special{pa 4918 810}%
\special{pa 4950 818}%
\special{pa 4980 830}%
\special{pa 5012 846}%
\special{pa 5042 864}%
\special{pa 5070 884}%
\special{pa 5096 906}%
\special{pa 5120 930}%
\special{pa 5140 956}%
\special{pa 5156 984}%
\special{pa 5172 1012}%
\special{pa 5182 1040}%
\special{pa 5192 1070}%
\special{pa 5200 1100}%
\special{pa 5206 1132}%
\special{pa 5208 1164}%
\special{pa 5212 1198}%
\special{pa 5212 1230}%
\special{pa 5212 1264}%
\special{pa 5212 1298}%
\special{pa 5210 1332}%
\special{pa 5208 1366}%
\special{pa 5206 1400}%
\special{pa 5204 1434}%
\special{pa 5202 1468}%
\special{pa 5200 1502}%
\special{pa 5200 1536}%
\special{pa 5200 1570}%
\special{pa 5200 1602}%
\special{pa 5202 1634}%
\special{pa 5204 1666}%
\special{pa 5206 1698}%
\special{pa 5208 1730}%
\special{pa 5210 1762}%
\special{pa 5214 1794}%
\special{pa 5216 1824}%
\special{pa 5220 1856}%
\special{pa 5224 1888}%
\special{pa 5228 1918}%
\special{pa 5230 1940}%
\special{sp}%
% STR 2 0 3 0
% 3 1650 1610 1650 1710 4 0
% $x_1$
\put(16.5000,-17.1000){\makebox(0,0)[rt]{$x_1$}}%
% STR 2 0 3 0
% 3 2740 1640 2740 1740 4 0
% $x_2$
\put(27.4000,-17.4000){\makebox(0,0)[rt]{$x_2$}}%
% STR 2 0 3 0
% 3 4880 1610 4880 1710 4 0
% $x_g$
\put(48.8000,-17.1000){\makebox(0,0)[rt]{$x_g$}}%
% STR 2 0 3 0
% 3 5450 560 5450 660 2 0
% $x_{g+1}$
\put(54.5000,-6.6000){\makebox(0,0)[lb]{$x_{g+1}$}}%
% STR 2 0 3 0
% 3 3250 580 3250 680 2 0
% $x_{2g-1}$
\put(32.5000,-6.8000){\makebox(0,0)[lb]{$x_{2g-1}$}}%
% STR 2 0 3 0
% 3 2240 570 2240 670 2 0
% $x_{2g}$
\put(22.4000,-6.7000){\makebox(0,0)[lb]{$x_{2g}$}}%
% ELLIPSE 2 0 3 0
% 4 6200 1300 6000 2200 6200 2330 6200 -50
% 
\special{pn 8}%
\special{ar 6200 1300 200 900  4.7123890 6.2831853}%
\special{ar 6200 1300 200 900  0.0000000 1.5707963}%
% ELLIPSE 2 2 3 0
% 4 6200 1300 6400 2200 6200 -50 6200 2330
% 
\special{pn 8}%
\special{ar 6200 1300 200 900  1.5707963 1.5926145}%
\special{ar 6200 1300 200 900  1.6580691 1.6798872}%
\special{ar 6200 1300 200 900  1.7453418 1.7671600}%
\special{ar 6200 1300 200 900  1.8326145 1.8544327}%
\special{ar 6200 1300 200 900  1.9198872 1.9417054}%
\special{ar 6200 1300 200 900  2.0071600 2.0289781}%
\special{ar 6200 1300 200 900  2.0944327 2.1162509}%
\special{ar 6200 1300 200 900  2.1817054 2.2035236}%
\special{ar 6200 1300 200 900  2.2689781 2.2907963}%
\special{ar 6200 1300 200 900  2.3562509 2.3780691}%
\special{ar 6200 1300 200 900  2.4435236 2.4653418}%
\special{ar 6200 1300 200 900  2.5307963 2.5526145}%
\special{ar 6200 1300 200 900  2.6180691 2.6398872}%
\special{ar 6200 1300 200 900  2.7053418 2.7271600}%
\special{ar 6200 1300 200 900  2.7926145 2.8144327}%
\special{ar 6200 1300 200 900  2.8798872 2.9017054}%
\special{ar 6200 1300 200 900  2.9671600 2.9889781}%
\special{ar 6200 1300 200 900  3.0544327 3.0762509}%
\special{ar 6200 1300 200 900  3.1417054 3.1635236}%
\special{ar 6200 1300 200 900  3.2289781 3.2507963}%
\special{ar 6200 1300 200 900  3.3162509 3.3380691}%
\special{ar 6200 1300 200 900  3.4035236 3.4253418}%
\special{ar 6200 1300 200 900  3.4907963 3.5126145}%
\special{ar 6200 1300 200 900  3.5780691 3.5998872}%
\special{ar 6200 1300 200 900  3.6653418 3.6871600}%
\special{ar 6200 1300 200 900  3.7526145 3.7744327}%
\special{ar 6200 1300 200 900  3.8398872 3.8617054}%
\special{ar 6200 1300 200 900  3.9271600 3.9489781}%
\special{ar 6200 1300 200 900  4.0144327 4.0362509}%
\special{ar 6200 1300 200 900  4.1017054 4.1235236}%
\special{ar 6200 1300 200 900  4.1889781 4.2107963}%
\special{ar 6200 1300 200 900  4.2762509 4.2980691}%
\special{ar 6200 1300 200 900  4.3635236 4.3853418}%
\special{ar 6200 1300 200 900  4.4507963 4.4726145}%
\special{ar 6200 1300 200 900  4.5380691 4.5598872}%
\special{ar 6200 1300 200 900  4.6253418 4.6471600}%
% SPLINE 1 0 3 0
% 5 2030 1920 1870 1550 1760 1370 1740 1290 1740 1290
% 
\special{pn 13}%
\special{pa 2030 1920}%
\special{pa 2020 1890}%
\special{pa 2010 1860}%
\special{pa 2000 1830}%
\special{pa 1988 1798}%
\special{pa 1978 1768}%
\special{pa 1966 1738}%
\special{pa 1954 1710}%
\special{pa 1940 1680}%
\special{pa 1928 1650}%
\special{pa 1912 1622}%
\special{pa 1898 1594}%
\special{pa 1880 1566}%
\special{pa 1864 1540}%
\special{pa 1844 1514}%
\special{pa 1826 1486}%
\special{pa 1808 1460}%
\special{pa 1790 1432}%
\special{pa 1776 1406}%
\special{pa 1762 1376}%
\special{pa 1752 1346}%
\special{pa 1746 1314}%
\special{pa 1740 1290}%
\special{sp}%
% SPLINE 1 2 3 0
% 6 1740 1290 1750 1230 1790 1090 1990 420 2080 400 2080 400
% 
\special{pn 13}%
\special{pa 1740 1290}%
\special{pa 1746 1258}%
\special{pa 1752 1228}%
\special{pa 1758 1196}%
\special{pa 1768 1166}%
\special{pa 1778 1136}%
\special{pa 1786 1104}%
\special{pa 1794 1072}%
\special{pa 1802 1038}%
\special{pa 1806 1004}%
\special{pa 1812 968}%
\special{pa 1816 930}%
\special{pa 1818 894}%
\special{pa 1822 856}%
\special{pa 1824 818}%
\special{pa 1828 782}%
\special{pa 1830 744}%
\special{pa 1834 708}%
\special{pa 1840 674}%
\special{pa 1846 640}%
\special{pa 1852 606}%
\special{pa 1860 576}%
\special{pa 1872 546}%
\special{pa 1884 520}%
\special{pa 1898 494}%
\special{pa 1914 472}%
\special{pa 1934 454}%
\special{pa 1956 436}%
\special{pa 1982 424}%
\special{pa 2010 414}%
\special{pa 2042 408}%
\special{pa 2074 402}%
\special{pa 2080 400}%
\special{sp -0.045}%
% SPLINE 1 2 3 0
% 6 2750 1290 2760 1230 2800 1090 3000 420 3090 400 3090 400
% 
\special{pn 13}%
\special{pa 2750 1290}%
\special{pa 2756 1258}%
\special{pa 2762 1228}%
\special{pa 2768 1196}%
\special{pa 2778 1166}%
\special{pa 2788 1136}%
\special{pa 2796 1104}%
\special{pa 2804 1072}%
\special{pa 2812 1038}%
\special{pa 2816 1004}%
\special{pa 2822 968}%
\special{pa 2826 930}%
\special{pa 2828 894}%
\special{pa 2832 856}%
\special{pa 2834 818}%
\special{pa 2838 782}%
\special{pa 2840 744}%
\special{pa 2844 708}%
\special{pa 2850 674}%
\special{pa 2856 640}%
\special{pa 2862 606}%
\special{pa 2870 576}%
\special{pa 2882 546}%
\special{pa 2894 520}%
\special{pa 2908 494}%
\special{pa 2924 472}%
\special{pa 2944 454}%
\special{pa 2966 436}%
\special{pa 2992 424}%
\special{pa 3020 414}%
\special{pa 3052 408}%
\special{pa 3084 402}%
\special{pa 3090 400}%
\special{sp -0.045}%
% SPLINE 1 0 3 0
% 5 3030 1920 2870 1550 2760 1370 2740 1290 2740 1290
% 
\special{pn 13}%
\special{pa 3030 1920}%
\special{pa 3020 1890}%
\special{pa 3010 1860}%
\special{pa 3000 1830}%
\special{pa 2988 1798}%
\special{pa 2978 1768}%
\special{pa 2966 1738}%
\special{pa 2954 1710}%
\special{pa 2940 1680}%
\special{pa 2928 1650}%
\special{pa 2912 1622}%
\special{pa 2898 1594}%
\special{pa 2880 1566}%
\special{pa 2864 1540}%
\special{pa 2844 1514}%
\special{pa 2826 1486}%
\special{pa 2808 1460}%
\special{pa 2790 1432}%
\special{pa 2776 1406}%
\special{pa 2762 1376}%
\special{pa 2752 1346}%
\special{pa 2746 1314}%
\special{pa 2740 1290}%
\special{sp}%
% SPLINE 1 0 3 0
% 5 5230 1930 5070 1560 4960 1380 4940 1300 4940 1300
% 
\special{pn 13}%
\special{pa 5230 1930}%
\special{pa 5220 1900}%
\special{pa 5210 1870}%
\special{pa 5200 1840}%
\special{pa 5188 1808}%
\special{pa 5178 1778}%
\special{pa 5166 1748}%
\special{pa 5154 1720}%
\special{pa 5140 1690}%
\special{pa 5128 1660}%
\special{pa 5112 1632}%
\special{pa 5098 1604}%
\special{pa 5080 1576}%
\special{pa 5064 1550}%
\special{pa 5044 1524}%
\special{pa 5026 1496}%
\special{pa 5008 1470}%
\special{pa 4990 1442}%
\special{pa 4976 1416}%
\special{pa 4962 1386}%
\special{pa 4952 1356}%
\special{pa 4946 1324}%
\special{pa 4940 1300}%
\special{sp}%
% SPLINE 1 2 3 0
% 6 4950 1290 4960 1230 5000 1090 5200 420 5290 400 5290 400
% 
\special{pn 13}%
\special{pa 4950 1290}%
\special{pa 4956 1258}%
\special{pa 4962 1228}%
\special{pa 4968 1196}%
\special{pa 4978 1166}%
\special{pa 4988 1136}%
\special{pa 4996 1104}%
\special{pa 5004 1072}%
\special{pa 5012 1038}%
\special{pa 5016 1004}%
\special{pa 5022 968}%
\special{pa 5026 930}%
\special{pa 5028 894}%
\special{pa 5032 856}%
\special{pa 5034 818}%
\special{pa 5038 782}%
\special{pa 5040 744}%
\special{pa 5044 708}%
\special{pa 5050 674}%
\special{pa 5056 640}%
\special{pa 5062 606}%
\special{pa 5070 576}%
\special{pa 5082 546}%
\special{pa 5094 520}%
\special{pa 5108 494}%
\special{pa 5124 472}%
\special{pa 5144 454}%
\special{pa 5166 436}%
\special{pa 5192 424}%
\special{pa 5220 414}%
\special{pa 5252 408}%
\special{pa 5284 402}%
\special{pa 5290 400}%
\special{sp -0.045}%
% STR 2 0 3 0
% 3 4110 1170 4110 1270 5 0
% $\cdots$
\put(41.1000,-12.7000){\makebox(0,0){$\cdots$}}%
% STR 2 0 3 0
% 3 6380 1780 6380 1880 1 0
% $*$
\put(63.8000,-18.8000){\makebox(0,0)[lt]{$*$}}%
% VECTOR 2 0 3 0
% 2 2260 1130 2260 1050
% 
\special{pn 8}%
\special{pa 2260 1130}%
\special{pa 2260 1050}%
\special{fp}%
\special{sh 1}%
\special{pa 2260 1050}%
\special{pa 2240 1118}%
\special{pa 2260 1104}%
\special{pa 2280 1118}%
\special{pa 2260 1050}%
\special{fp}%
% VECTOR 2 0 3 0
% 2 1460 1220 1460 1170
% 
\special{pn 8}%
\special{pa 1460 1220}%
\special{pa 1460 1170}%
\special{fp}%
\special{sh 1}%
\special{pa 1460 1170}%
\special{pa 1440 1238}%
\special{pa 1460 1224}%
\special{pa 1480 1238}%
\special{pa 1460 1170}%
\special{fp}%
% VECTOR 2 0 3 0
% 2 2480 1200 2480 1150
% 
\special{pn 8}%
\special{pa 2480 1200}%
\special{pa 2480 1150}%
\special{fp}%
\special{sh 1}%
\special{pa 2480 1150}%
\special{pa 2460 1218}%
\special{pa 2480 1204}%
\special{pa 2500 1218}%
\special{pa 2480 1150}%
\special{fp}%
% VECTOR 2 0 3 0
% 2 3290 1140 3290 1060
% 
\special{pn 8}%
\special{pa 3290 1140}%
\special{pa 3290 1060}%
\special{fp}%
\special{sh 1}%
\special{pa 3290 1060}%
\special{pa 3270 1128}%
\special{pa 3290 1114}%
\special{pa 3310 1128}%
\special{pa 3290 1060}%
\special{fp}%
% VECTOR 2 0 3 0
% 2 5460 1180 5460 1100
% 
\special{pn 8}%
\special{pa 5460 1180}%
\special{pa 5460 1100}%
\special{fp}%
\special{sh 1}%
\special{pa 5460 1100}%
\special{pa 5440 1168}%
\special{pa 5460 1154}%
\special{pa 5480 1168}%
\special{pa 5460 1100}%
\special{fp}%
% VECTOR 2 0 3 0
% 2 4660 1230 4660 1180
% 
\special{pn 8}%
\special{pa 4660 1230}%
\special{pa 4660 1180}%
\special{fp}%
\special{sh 1}%
\special{pa 4660 1180}%
\special{pa 4640 1248}%
\special{pa 4660 1234}%
\special{pa 4680 1248}%
\special{pa 4660 1180}%
\special{fp}%
% STR 2 0 3 0
% 3 6150 1130 6150 1230 2 0
% $\zeta$
\put(61.5000,-12.3000){\makebox(0,0)[lb]{$\zeta$}}%
% ELLIPSE 2 2 3 0
% 4 5870 1300 6070 2200 5870 -50 5870 2330
% 
\special{pn 8}%
\special{ar 5870 1300 200 900  1.5707963 1.5926145}%
\special{ar 5870 1300 200 900  1.6580691 1.6798872}%
\special{ar 5870 1300 200 900  1.7453418 1.7671600}%
\special{ar 5870 1300 200 900  1.8326145 1.8544327}%
\special{ar 5870 1300 200 900  1.9198872 1.9417054}%
\special{ar 5870 1300 200 900  2.0071600 2.0289781}%
\special{ar 5870 1300 200 900  2.0944327 2.1162509}%
\special{ar 5870 1300 200 900  2.1817054 2.2035236}%
\special{ar 5870 1300 200 900  2.2689781 2.2907963}%
\special{ar 5870 1300 200 900  2.3562509 2.3780691}%
\special{ar 5870 1300 200 900  2.4435236 2.4653418}%
\special{ar 5870 1300 200 900  2.5307963 2.5526145}%
\special{ar 5870 1300 200 900  2.6180691 2.6398872}%
\special{ar 5870 1300 200 900  2.7053418 2.7271600}%
\special{ar 5870 1300 200 900  2.7926145 2.8144327}%
\special{ar 5870 1300 200 900  2.8798872 2.9017054}%
\special{ar 5870 1300 200 900  2.9671600 2.9889781}%
\special{ar 5870 1300 200 900  3.0544327 3.0762509}%
\special{ar 5870 1300 200 900  3.1417054 3.1635236}%
\special{ar 5870 1300 200 900  3.2289781 3.2507963}%
\special{ar 5870 1300 200 900  3.3162509 3.3380691}%
\special{ar 5870 1300 200 900  3.4035236 3.4253418}%
\special{ar 5870 1300 200 900  3.4907963 3.5126145}%
\special{ar 5870 1300 200 900  3.5780691 3.5998872}%
\special{ar 5870 1300 200 900  3.6653418 3.6871600}%
\special{ar 5870 1300 200 900  3.7526145 3.7744327}%
\special{ar 5870 1300 200 900  3.8398872 3.8617054}%
\special{ar 5870 1300 200 900  3.9271600 3.9489781}%
\special{ar 5870 1300 200 900  4.0144327 4.0362509}%
\special{ar 5870 1300 200 900  4.1017054 4.1235236}%
\special{ar 5870 1300 200 900  4.1889781 4.2107963}%
\special{ar 5870 1300 200 900  4.2762509 4.2980691}%
\special{ar 5870 1300 200 900  4.3635236 4.3853418}%
\special{ar 5870 1300 200 900  4.4507963 4.4726145}%
\special{ar 5870 1300 200 900  4.5380691 4.5598872}%
\special{ar 5870 1300 200 900  4.6253418 4.6471600}%
% SPLINE 1 0 3 0
% 6 6310 1940 6150 1970 6000 2120 5920 2180 5880 2200 5880 2200
% 
\special{pn 13}%
\special{pa 6310 1940}%
\special{pa 6276 1942}%
\special{pa 6244 1946}%
\special{pa 6212 1950}%
\special{pa 6182 1958}%
\special{pa 6152 1970}%
\special{pa 6128 1986}%
\special{pa 6104 2006}%
\special{pa 6082 2028}%
\special{pa 6060 2054}%
\special{pa 6038 2078}%
\special{pa 6016 2104}%
\special{pa 5994 2126}%
\special{pa 5970 2148}%
\special{pa 5944 2166}%
\special{pa 5916 2182}%
\special{pa 5888 2198}%
\special{pa 5880 2200}%
\special{sp}%
% SPLINE 1 0 3 0
% 8 5850 400 5910 450 5980 630 5980 630 6110 1580 6270 1900 6330 1930 6330 1930
% 
\special{pn 13}%
\special{pa 5850 400}%
\special{pa 5876 420}%
\special{pa 5900 440}%
\special{pa 5922 464}%
\special{pa 5938 490}%
\special{pa 5952 520}%
\special{pa 5962 550}%
\special{pa 5970 582}%
\special{pa 5978 614}%
\special{pa 5984 648}%
\special{pa 5990 680}%
\special{pa 5996 712}%
\special{pa 6000 744}%
\special{pa 6004 776}%
\special{pa 6010 808}%
\special{pa 6014 840}%
\special{pa 6018 870}%
\special{pa 6022 902}%
\special{pa 6026 932}%
\special{pa 6028 964}%
\special{pa 6032 994}%
\special{pa 6036 1026}%
\special{pa 6038 1056}%
\special{pa 6042 1088}%
\special{pa 6044 1118}%
\special{pa 6048 1150}%
\special{pa 6052 1180}%
\special{pa 6054 1212}%
\special{pa 6058 1244}%
\special{pa 6062 1274}%
\special{pa 6066 1306}%
\special{pa 6070 1338}%
\special{pa 6074 1370}%
\special{pa 6080 1402}%
\special{pa 6084 1434}%
\special{pa 6090 1466}%
\special{pa 6094 1500}%
\special{pa 6102 1532}%
\special{pa 6108 1566}%
\special{pa 6114 1600}%
\special{pa 6122 1634}%
\special{pa 6130 1666}%
\special{pa 6140 1700}%
\special{pa 6150 1732}%
\special{pa 6162 1764}%
\special{pa 6176 1792}%
\special{pa 6192 1820}%
\special{pa 6210 1846}%
\special{pa 6232 1870}%
\special{pa 6254 1890}%
\special{pa 6280 1906}%
\special{pa 6310 1920}%
\special{pa 6330 1930}%
\special{sp}%
% VECTOR 2 0 3 0
% 2 6060 1250 6060 1330
% 
\special{pn 8}%
\special{pa 6060 1250}%
\special{pa 6060 1330}%
\special{fp}%
\special{sh 1}%
\special{pa 6060 1330}%
\special{pa 6080 1264}%
\special{pa 6060 1278}%
\special{pa 6040 1264}%
\special{pa 6060 1330}%
\special{fp}%
\end{picture}%
\]
\caption{generators $x_1, \ldots ,x_{2g}$ of $\pi_1(\Sigma_{g,1},*)$ and a simple closed curve $\zeta$}\label{fig1}
\end{figure}
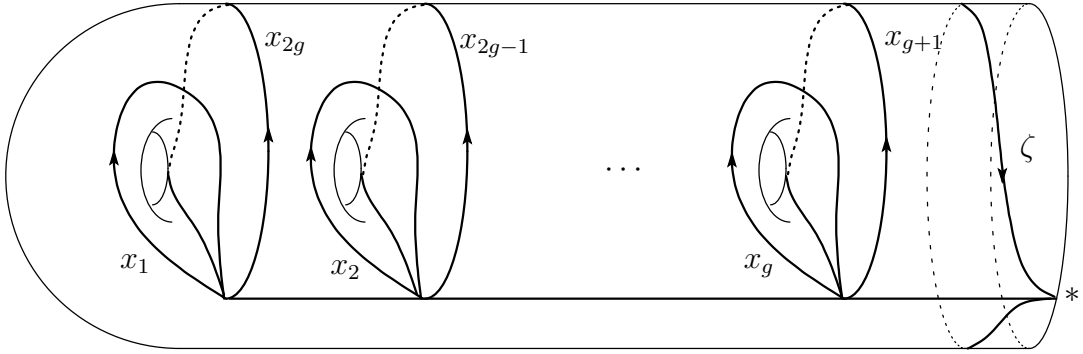

Then the homology classes $e_1, \ldots, e_{2g}$ of $x_1, \ldots, x_{2g}$ form a
symplectic basis of the homology group $H_1(\Sigma_{g,1},\Z)$. Using this symplectic basis, we can identify
$\mathrm{Aut}(H_1(\Sigma_{g,1},\Z))$ as the general linear group $\GL(2g,\Z)$. Under this identification, the image of $\mu_{M}$ is considered as
the symplectic group
\[ \Sp(2g,\Z):=\{X \in \GL(2g,\Z) \ | \ {}^{t}X J X = J \} \,\,\, \mathrm{for} \,\,\, J= \left(
\begin{array}{cc}
0 & I_g \\
-I_g & 0
\end{array}
\right) \]
where $I_g$ is the identity matrix of degree $g$.

\vspace{0.5em}

Next, we consider an embedding of the mapping class group $\M_{g,1}$ into the automorphism group of a free group of rank $2g$.
For $n \geq 2$ let $F_n$ be a free group of rank $n$ with basis $x_1, \ldots, x_n$.
We denote by $\mathrm{Aut}\,F_n$ the automorphism group of $F_n$. Let $H$ be the abelianization $H_1(F_n,\Z)$ of $F_n$
and $\mu : \mathrm{Aut}\,F_n \rightarrow \mathrm{Aut}\,H$ a natural homomorphism induced from
the abelianization map $F_n \rightarrow H$. Throughout the paper, we identify $\mathrm{Aut}\,H$ with the general linear group $\GL(n,\Z)$ by
fixing a basis $e_1, \ldots, e_n$ of $H$ induced from the basis $x_1, \ldots , x_n$ of $F_n$.
By a classical work of Nielsen \cite{Ni1}, a finite presentation of $\mathrm{Aut}\,F_n$ is obtained.
Observing the images of the generators of Nielsen's presentation, we see that $\rho$ is surjective.
The kernel $\mathrm{IA}_n$ of $\rho$ is called the IA-automorphism group of $F_n$. The IA-automorphism group $\mathrm{IA}_n$
is a free group analogue of the Torelli group $\mathcal{I}_{g,1}$.

\vspace{0.5em}

Now, throughout the paper, we identify $\pi_1(\Sigma_{g,1}, *)$ with $F_{2g}$, and $H_1(\Sigma_{g,1},\Z)$ with $H$ for $n=2g$ using the basis above.
Then the action of $\M_{g,1}$ on $\pi_1(\Sigma_{g,1}, *)=F_{2g}$ induces a natural homomorphism
\[ \varphi : \M_{g,1} \rightarrow \mathrm{Aut}\,F_{2g}. \]
By a classical work due to Dehn and Nielsen, it is known that $\varphi$ is injective. More precisely, we have
\begin{thm}[Dehn and Nielsen]
For any $g \geq 1$, we have
\[ \varphi({\M_{g,1}}) = \{ \sigma \in \mathrm{Aut}\,F_{2g} \,\, | \,\, \zeta^{\sigma} = \zeta \} \]
where $\zeta=[x_1,x_{2g}] [x_2,x_{2g-1}] \cdots [x_{g},x_{g+1}] \in F_{2g}$, namely $\zeta$ is a homotopy class of a simple closed curve on
$\Sigma_{g,1}$ parallel to the boundary.
\end{thm}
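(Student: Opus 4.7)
The proof naturally splits into two inclusions. The inclusion $\varphi(\M_{g,1}) \subseteq \{\sigma \in \Aut F_{2g} : \zeta^\sigma = \zeta\}$ is the easy direction. With the basepoint $*$ on $\partial \Sigma_{g,1}$ and the basis $x_1,\ldots,x_{2g}$ chosen as in Figure \ref{fig1}, the standard surface relation gives $\zeta=[x_1,x_{2g}][x_2,x_{2g-1}]\cdots[x_g,x_{g+1}]$ as the based homotopy class of a loop running once around the boundary component. Any diffeomorphism $\phi$ representing an element of $\M_{g,1}$ fixes $\partial\Sigma_{g,1}$ pointwise, so it fixes this loop and hence $\zeta^{\varphi(\phi)}=\zeta$.

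For the reverse inclusion, let $\sigma\in\Aut F_{2g}$ satisfy $\zeta^\sigma=\zeta$; the plan is to realize $\sigma$ by a mapping class. Since $\Sigma_{g,1}$ is aspherical with $\pi_1(\Sigma_{g,1},*)=F_{2g}$, elementary obstruction theory yields a based continuous self-map $f\colon(\Sigma_{g,1},*)\to(\Sigma_{g,1},*)$ inducing $\sigma$ on $\pi_1$. The condition $\zeta^\sigma=\zeta$ lets us homotope $f$ so that it sends the boundary to itself by a map freely homotopic to the identity (traversing $\partial$ once with the same orientation); a further homotopy makes $f$ the identity on a collar neighborhood of $\partial\Sigma_{g,1}$.

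The decisive step is to promote this homotopy equivalence to a diffeomorphism. The cleanest route is to cap off $\partial\Sigma_{g,1}$ by a disk to form the closed surface $\Sigma_g$ and extend $f$ by the identity on the cap; then invoke the classical Nielsen--Baer realization for $\Sigma_g$, which states that every homotopy equivalence of a closed orientable surface of genus $\geq 1$ is homotopic to a homeomorphism. Because $\sigma$ stabilizes $\zeta$ (rather than sending it to $\zeta^{-1}$), the resulting homeomorphism preserves orientation. Removing the disk cap yields a diffeomorphism of $\Sigma_{g,1}$ that fixes $\partial$ pointwise and realizes $\sigma$ on $\pi_1$, as required. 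The main obstacle is the closed-surface realization step, which is a genuinely nontrivial classical input relying on the hyperbolic or conformal geometry of $\Sigma_g$; in this paper it would simply be cited from the standard literature (e.g.\ Farb--Margalit) rather than reproved.
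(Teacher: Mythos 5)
The paper does not prove this theorem; it cites it as the classical result of Dehn and Nielsen and moves on. So there is no internal proof to compare your argument against.

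Your easy direction is fine, and the overall plan (realize $\sigma$ by a self-map using asphericity, then promote to a diffeomorphism) is the right shape. The genuine gap is in the sentence ``Removing the disk cap yields a diffeomorphism of $\Sigma_{g,1}$ that fixes $\partial$ pointwise and realizes $\sigma$.'' Closed-surface Nielsen--Baer hands you a homeomorphism $F$ of $\Sigma_g$ in the same free homotopy class as your extension of $f$, but the homotopy is not relative to the capping disk $D$, so $F$ need not restrict to the identity on $D$ and ``removing the cap'' is not yet a legal operation. You can force $F$ to be the identity on $D$ by a further isotopy (the disk theorem / change of coordinates principle), but then the automorphism of $\pi_1(\Sigma_{g,1},*)=F_{2g}$ induced by $F\vert_{\Sigma_{g,1}}$ is only pinned down modulo the kernel of the capping map $\M_{g,1}\to\mathrm{Mod}(\Sigma_g,*)$; this kernel is generated by the boundary Dehn twist $T_\partial$, which acts on $F_{2g}$ as conjugation by $\zeta$. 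So a priori you realize some $\sigma'$ with $\zeta^{\sigma'}=\zeta$ agreeing with $\sigma$ only after quotienting by $\langle\!\langle\zeta\rangle\!\rangle$. Showing that $\sigma$ and $\sigma'$ then differ exactly by a power of $c_\zeta$ (so that composing with a power of $T_\partial^{-1}$ recovers $\sigma$ itself) is not automatic: it is precisely the extra algebraic content of the bounded case beyond the closed case, and your sketch suppresses it entirely. This is why the standard treatments of the bounded Dehn--Nielsen theorem (Zieschang, Magnus--Karrass--Solitar) argue directly with automorphisms of the free group rather than by capping off, or else set up the central extension $1\to\langle T_\partial\rangle\to\M_{g,1}\to\mathrm{Mod}(\Sigma_g,*)\to 1$ carefully and track the lift. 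Relatedly, the remark that $\zeta^\sigma=\zeta$ ``lets us homotope $f$ so that it sends the boundary to itself by a map freely homotopic to the identity'' should be sharpened: the based equality $\zeta^\sigma=\zeta$ (as opposed to mere conjugacy with $\zeta^{\pm1}$) is exactly what gives a based homotopy of $f\vert_\partial$ to the identity, which is what the subsequent collar homotopy actually uses; stating it as a free homotopy undersells the role of the hypothesis.
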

For $n=2g$, we have $\mu_{\M} = \mu \circ \varphi : \M_{g,1} \rightarrow \Sp(2g,\Z)$, and
a commutative diagram:
\[
\xymatrix{
  1\ar[r] & \mathrm{IA}_{2g}\ar[r] & \mathrm{Aut}\,F_{2g}  \ar[r]^{\mu} & \GL(2g,\Z)\ar[r] &  1 \\
  1 \ar[r] & \mathcal{I}_{g,1} \ar@{^{(}->}[u]_{\varphi|_{\mathcal{I}_{g,1}}}\ar[r] & \M_{g,1}\ar@{^{(}->}[u]_{\varphi}\ar[r]^{\mu_{\M}}& \Sp(2g,\Z)\ar[r]\ar@{^{(}->}[u] & 1 \\
}
\]

\subsection{Free Lie algebras}

In this subsection, we recall the free Lie algebra generated by $H$, and its derivation algebra.
(See \cite{Ser} and \cite{Re} for basic material concerning the free Lie algebra for instance.)

\vspace{0.5em}

Let $\Gamma_n(1) \supset \Gamma_n(2) \supset \cdots$ be the lower central series of a free group $F_n$ defined by the rule
\[ \Gamma_n(1):= F_n, \hspace{1em} \Gamma_n(k) := [\Gamma_n(k-1),F_n], \hspace{1em} k \geq 2. \]
We denote by $\mathcal{L}_n(k) := \Gamma_n(k)/\Gamma_n(k+1)$ the $k$-th graded quotient of the lower central series of $F_n$,
and by $\mathcal{L}_n := {\bigoplus}_{k \geq 1} \mathcal{L}_n(k)$ the associated graded sum.
The degree $1$ part $\mathcal{L}_n(1)$ of $\mathcal{L}_n$ is just $H$.
Classically, Magnus showed that each of $\mathcal{L}_n(k)$ is a free abelian, and Witt \cite{Wit} gave its rank as follows.
\begin{equation}\label{ex-witt}
 \mathrm{rank}_{\Z}(\mathcal{L}_n(k))=\frac{1}{k} \sum_{d | k} \Mob(d) n^{\frac{k}{d}}
\end{equation}
where $\Mob$ is the M$\ddot{\mathrm{o}}$bius function.
For any $k$, $l \geq 1$, let us consider a bilinear alternating map
\[ [ \, , \,]_{\mathrm{Lie}} : \mathcal{L}_n(k) \times \mathcal{L}_n(l) \rightarrow \mathcal{L}_n(k+l) \]
defined by $[\, [\alpha], [\beta]\, ]_{\mathrm{Lie}} := [\, [\alpha, \beta] \,]$ for any $[\alpha] \in \mathcal{L}_n(k)$ and $[\beta] \in \mathcal{L}_n(l)$,
where $[\alpha, \beta]$ is a commutator in $F_n$, and $[\, [\alpha, \beta] \,]$ is a coset class of $[\alpha, \beta]$ in $\mathcal{L}_n(k+l)$.
Then $[ \, , \,]_{\mathrm{Lie}}$ induces a graded Lie algebra structure of the graded sum $\mathcal{L}_n$.
By a classical work of Magnus, the Lie algebra $\mathcal{L}_n$ is isomorphic to the free Lie algebra generated by $H$.

\vspace{0.5em}

The Lie algebra $\mathcal{L}_n$ is considered as a Lie subalgebra of the tensor algebra generated by $H$ as follows.
Let
\[ T(H):= \Z \oplus H \oplus H^{\otimes 2} \oplus \cdots \]
be the tensor algebra of $H$ over $\Z$. Then $T(H)$ is the
universal enveloping algebra of the free Lie algebra $\mathcal{L}_n$, and the natural map
$\iota : \mathcal{L}_n \rightarrow T(H)$ defined by
\[ [X,Y] \mapsto X \otimes Y - Y \otimes X \]
for $X$, $Y \in \mathcal{L}_n$ 
is an injective graded Lie algebra homomorphism.
We denote by $\iota_k$ the homomorphism of degree $k$ part of $\iota$, and
consider $\mathcal{L}_n(k)$ as a submodule $H^{\otimes k}$ through $\iota_k$.

\vspace{0.5em}

Here, we recall the derivation algebra of the free Lie algebra.
Let $\mathrm{Der}(\mathcal{L}_n)$ be the graded Lie algebra of derivations of $\mathcal{L}_n$.
Namely,
\[ \mathrm{Der}(\mathcal{L}_n) := \{ f : \mathcal{L}_n \xrightarrow{\Z-\mathrm{linear}} \mathcal{L}_n \,|\, f([a,b]) = [f(a),b]+ [a,f(b)], \,\,\,
   a, b \in \mathcal{L}_n \}. \]
For $k \geq 0$, the degree $k$ part of $\mathrm{Der}(\mathcal{L}_n)$ is defined to be
\[ \mathrm{Der}(\mathcal{L}_n)(k) := \{ f \in \mathrm{Der}(\mathcal{L}_n) \,|\, f(a) \in \mathcal{L}_n(k+1), \,\,\, a \in H \}. \]
Then, we have
\[ \mathrm{Der}(\mathcal{L}_n) = \bigoplus_{k \geq 0} \mathrm{Der}(\mathcal{L}_n)(k), \]
and can consider $\mathrm{Der}(\mathcal{L}_n)(k)$ as
\[ \mathrm{Hom}_{\Z}(H,\mathcal{L}_n(k+1)) = H^* {\otimes}_{\Z} \mathcal{L}_n(k+1) \]
for each $k \geq 1$ by the universality of the free Lie algebra.
Let $\mathrm{Der}^+(\mathcal{L}_n)$ be a graded Lie subalgebra of $\mathrm{Der}(\mathcal{L}_n)(k)$ with positive degree.
(See Section 8 of Chapter II in \cite{Bou}.)

\subsection{(Higher) Johnson homomorphisms}\label{Ss-John}

First we recall the Johnson filtration and the Johnson homomorphisms of the automorphism group of a free group.
Then we consider those of the mapping class group.

\vspace{0.5em}

For each $k \geq 1$, let $N_{n,k}:=F_n/\Gamma_n(k+1)$ of $F_n$ be the free nilpotent group of class $k$ and rank $n$, and $\mathrm{Aut}\,N_{n,k}$
its automorphism group.
Since the subgroup $\Gamma_n(k+1)$ is characteristic in $F_n$, the group $\mathrm{Aut}\,F_n$ naturally acts on $N_{n,k}$ from the right.
This action induces a homomorphism $\mathrm{Aut}\,F_n \rightarrow \mathrm{Aut}\,N_{n,k}$.
Let $\mathcal{A}_n(k)$ be the kernel of this homomorphism. Then the groups $\mathcal{A}_n(k)$ define a descending filtration
\[ \mathrm{IA}_n = \mathcal{A}_n(1) \supset \mathcal{A}_n(2) \supset \cdots \]
This filtration is called the Johnson filtration of $\mathrm{Aut}\,F_n$.
Set $\mathrm{gr}^k (\mathcal{A}_n) := \mathcal{A}_n(k)/\mathcal{A}_n(k+1)$.
Andreadakis \cite{And} originally studied the Johnson filtration, and obtained basic and important properties of it as follows:
\begin{thm}[Andreadakis, \cite{And}]\label{T-And} \quad 
\begin{enumerate}[$(i)$]
\item For any $k$, $l \geq 1$, $\sigma \in \mathcal{A}_n(k)$ and $x \in \Gamma_n(l)$, $x^{-1} x^{\sigma} \in \Gamma_n(k+l)$.
\item For any $k$, $l \geq 1$, $[\mathcal{A}_n(k), \mathcal{A}_n(l)] \subset \mathcal{A}_n(k+l)$.
In other words, the Johnson filtration is a descending central filtration of $\mathrm{IA}_n$.
\item For any $k \geq 1$, $\mathrm{gr}^k (\mathcal{A}_n)$ is a free abelian group of finite rank.
\end{enumerate}
\end{thm}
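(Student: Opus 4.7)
The plan is to handle the three parts in order, with (i) carrying all the combinatorial weight, (ii) a short corollary of (i) via Hall--Witt-style manipulation, and (iii) an application of (i) via the Johnson homomorphism.

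For part (i), the plan is induction on $l$ with $k$ fixed. The base case $l=1$ is immediate: if $\sigma \in \mathcal{A}_n(k)$, then by definition $\sigma$ acts trivially on $F_n/\Gamma_n(k+1)$, so $x^{-1}x^{\sigma} \in \Gamma_n(k+1)$ for every $x \in F_n$. For the inductive step, I would first observe that the set $S_{k,l} := \{ x \in F_n \mid x^{-1}x^{\sigma} \in \Gamma_n(k+l)\}$ is a subgroup: if $x^{-1}x^{\sigma}, y^{-1}y^{\sigma} \in \Gamma_n(k+l)$, then $(xy)^{-1}(xy)^{\sigma} = y^{-1}(x^{-1}x^{\sigma})y \cdot (y^{-1}y^{\sigma}) \in \Gamma_n(k+l)$ by normality, and a similar trick handles inverses. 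Since $\Gamma_n(l+1) = [\Gamma_n(l),F_n]$ is generated by commutators $[a,b]$ with $a \in \Gamma_n(l)$, $b \in F_n$, it suffices to verify $[a,b]^{-1}[a,b]^{\sigma} \in \Gamma_n(k+l+1)$ for such generators. Writing $a^{\sigma} = a\cdot a_1$ with $a_1 \in \Gamma_n(k+l)$ and $b^{\sigma} = b\cdot b_1$ with $b_1 \in \Gamma_n(k+1)$ (by the inductive hypothesis and the base case), I would expand $[aa_1, bb_1]$ using $[xy,z] = [x,z]^y[y,z]$ and $[x,yz] = [x,z][x,y]^z$ and then observe that every error term lies in one of $[\Gamma_n(l),\Gamma_n(k+1)]$, $[\Gamma_n(k+l),F_n]$, or $[\Gamma_n(l+1),\Gamma_n(k+1)]$, all of which sit inside $\Gamma_n(k+l+1)$, while the main term $[a,b]^{b_1a_1}$ is congruent to $[a,b]$ modulo $\Gamma_n(k+l+1)$ because conjugation by an element of $\Gamma_n(k+1)$ changes $[a,b] \in \Gamma_n(l+1)$ by a commutator in $\Gamma_n(k+l+2)$. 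The bookkeeping of these error terms is the step I expect to be the most delicate.

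For part (ii), the plan is to fix $\sigma \in \mathcal{A}_n(k)$, $\tau \in \mathcal{A}_n(l)$ and any $x \in F_n$, and to show that $x^{[\sigma,\tau]} \equiv x \pmod{\Gamma_n(k+l+1)}$. Using part (i), write $x^{\sigma} = x\alpha$ with $\alpha \in \Gamma_n(k+1)$ and $x^{\tau} = x\beta$ with $\beta \in \Gamma_n(l+1)$; applying part (i) again to $\alpha$ (with respect to $\tau$) and to $\beta$ (with respect to $\sigma$) yields $x^{\sigma\tau} = x\beta\alpha\alpha'$ and $x^{\tau\sigma} = x\alpha\beta\beta'$, with $\alpha',\beta' \in \Gamma_n(k+l+1)$. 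Then $(x^{\sigma\tau})^{-1}x^{\tau\sigma}$ reduces modulo $\Gamma_n(k+l+1)$ to the commutator $[\alpha^{-1},\beta^{-1}] \in [\Gamma_n(k+1),\Gamma_n(l+1)] \subset \Gamma_n(k+l+2)$, so $x^{\sigma\tau} \equiv x^{\tau\sigma} \pmod{\Gamma_n(k+l+1)}$. Because $\Gamma_n(k+l+1)$ is characteristic in $F_n$, applying $\sigma^{-1}\tau^{-1}$ to both sides preserves the congruence and gives $x^{[\sigma,\tau]} \equiv x \pmod{\Gamma_n(k+l+1)}$, as required.

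For part (iii), the plan is to construct the Johnson homomorphism
\[
\tau_k : \mathcal{A}_n(k) \longrightarrow \Hom_{\Z}(H, \mathcal{L}_n(k+1)),
\qquad
\tau_k(\sigma)(x \Gamma_n(2)) := x^{-1}x^{\sigma} \bmod \Gamma_n(k+2).
\]
Well-definedness on $H = F_n/\Gamma_n(2)$ follows from part (i): if $x \in \Gamma_n(2)$, then $x^{-1}x^{\sigma} \in \Gamma_n(k+2)$. The homomorphism property reduces, after using $x^{\sigma\tau} = (x\cdot x^{-1}x^{\sigma})^{\tau}$, to showing $(x^{-1}x^{\sigma})^{\tau} \equiv x^{-1}x^{\sigma} \pmod{\Gamma_n(k+2)}$, which is exactly part (i) applied to $x^{-1}x^{\sigma} \in \Gamma_n(k+1)$ and $\tau \in \mathcal{A}_n(k)$ (since $2k+1 \geq k+2$), together with the fact that $\Gamma_n(k+1)/\Gamma_n(k+2)$ is abelian. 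The kernel of $\tau_k$ is $\mathcal{A}_n(k+1)$ directly from its definition, so $\tau_k$ descends to an injection $\mathrm{gr}^k(\mathcal{A}_n) \hookrightarrow H^{*} \otimes_{\Z} \mathcal{L}_n(k+1)$. Since the target is free abelian of finite rank, so is $\mathrm{gr}^k(\mathcal{A}_n)$. The technical heart of the entire theorem is the commutator calculation in the inductive step of part (i); once that is secured, parts (ii) and (iii) follow essentially formally.
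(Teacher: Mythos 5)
The paper states this theorem as a cited result of Andreadakis and gives no proof of its own, so there is nothing internal to compare against; your argument is the standard one (essentially Andreadakis's original induction) and is correct, with (i) carrying the weight and (ii), (iii) following formally as you say. The only point to watch is that the commutator identities you quote ($[xy,z]=[x,z]^y[y,z]$, etc.) belong to the convention $[x,y]=x^{-1}y^{-1}xy$, whereas the paper uses $[x,y]=xyx^{-1}y^{-1}$; with either convention the error terms land in $[\Gamma_n(l),\Gamma_n(k+1)]$, $[\Gamma_n(k+l),F_n]$ and $[\Gamma_n(l+1),\Gamma_n(k+1)]$ as you claim, so the bookkeeping goes through.
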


\vspace{0.5em}

In order to study the structure of ${\mathrm{gr}}^k (\mathcal{A}_n)$,
the $k$-th Johnson homomorphism of $\mathrm{Aut}\,F_n$ is defined as follows.
\begin{dfn}
For each $k \geq 1$, define a homomorphism
$\tilde{\tau}_k : \mathcal{A}_n(k) \rightarrow \mathrm{Hom}_{\Z}(H, {\mathcal{L}}_n(k+1))$ by
\[ \sigma \hspace{0.3em} \mapsto \hspace{0.3em} \big{(} x \,\, \mathrm{mod} \,\, \Gamma_n(2) \mapsto x^{-1} x^{\sigma}
     \,\, \mathrm{mod} \,\, \Gamma_n(k+2) \big{)}, \hspace{1em} x \in F_n. \]
Then the kernel of $\tilde{\tau}_k$ is just $\mathcal{A}_n(k+1)$. 
Hence it induces an injective homomorphism
\[ \tau_k : \mathrm{gr}^k (\mathcal{A}_n) \hookrightarrow \mathrm{Hom}_{\Z}(H, \mathcal{L}_n(k+1))
       = H^* \otimes_{\Z} \mathcal{L}_n(k+1). \]
This homomorphism is called the $k$-th Johnson homomorphism of $\mathrm{Aut}\,F_n$.
\end{dfn}

Here we consider actions of $\mathrm{GL}(n,\Z)=\mathrm{Aut}\,F_n/\mathrm{IA}_n$.
First, since each term of the lower central series of $F_n$ is a characteristic subgroup, $\mathrm{Aut}\,F_n$ naturally acts on it,
and hence each of the graded quotient $\mathcal{L}_n(k)$.
By (i) of Theorem {\rmfamily \ref{T-And}}, we see that the action of $\mathrm{IA}_n$ on $\mathcal{L}_n(k)$ is trivial.
Thus the action of $\mathrm{GL}(n,\Z) = \mathrm{Aut}\,F_n/\mathrm{IA}_n$ on $\mathcal{L}_n(k)$ is well-defined. \\
On the other hand, since each term of the Johnson filtration is a normal subgroup of $\mathrm{Aut}\,F_n$,
the group $\mathrm{Aut}\,F_n$ naturally acts on $\mathcal{A}_n(k)$ by conjugation, and hence each of the graded quotient
$\mathrm{gr}^k (\mathcal{A}_n)$. By (ii) of Theorem {\rmfamily \ref{T-And}}, we see that the action of $\mathrm{IA}_n$ on
$\mathrm{gr}^k (\mathcal{A}_n)$ is trivial. Namely, we may consider $\mathrm{gr}^k (\mathcal{A}_n)$ as a $\mathrm{GL}(n,\Z) = \mathrm{Aut}\,F_n/\mathrm{IA}_n$-module.
With respect to the actions above, we see that
The Johnson homomorphism $\tau_k$ is $\mathrm{GL}(n,\Z)$-equivariant for each $k \geq 1$. 

\vspace{0.5em}

Furthermore, we remark that the sum of the Johnson homomorphisms forms a Lie algebra homomorphism as follows.
Let ${\mathrm{gr}}(\mathcal{A}_n) := \bigoplus_{k \geq 1} {\mathrm{gr}}^k (\mathcal{A}_n)$ be the graded sum of
$\mathrm{gr}^k (\mathcal{A}_n)$.
The graded sum ${\mathrm{gr}}(\mathcal{A}_n)$ has a graded Lie algebra structure induced from the commutator bracket on $\mathrm{IA}_n$
by an argument similar to that of the free Lie algebra $\mathcal{L}_n$.
Then the sum of the Johnson homomorphisms
\[ \tau := \bigoplus_{k \geq 1} \tau_k : {\mathrm{gr}}(\mathcal{A}_n) \rightarrow \mathrm{Der}^{+}(\mathcal{L}_n) \]
is a graded Lie algebra homomorphism. (See also Theorem 4.8 in \cite{Mo1}.)

\vspace{0.5em}

In the following, we consider three central subfiltration of the Johnson filtration of $\mathrm{Aut}\,F_n$, and
\lq\lq restrictions" of the Johnson homomorphism $\tau_k$.

\vspace{0.5em}

The first one is the lower central series of $\mathrm{IA}_n$.
Let $\mathcal{A}_n'(k)$ be the lower central series of $\mathrm{IA}_n$ with $\mathcal{A}_n'(1)=\mathrm{IA}_n$.
Since the Johnson filtration is central, $\mathcal{A}_n'(k) \subset \mathcal{A}_n(k)$ for each $k \geq 1$.
Set $\mathrm{gr}^k(\mathcal{A}_n') := \mathcal{A}_n'(k)/\mathcal{A}_n'(k+1)$.
Then $\mathrm{GL}(n,\Z)$ naturally acts on each of $\mathrm{gr}^k(\mathcal{A}_n')$, and
the restriction of $\tilde{\tau}_k$ to $\mathcal{A}_n'(k)$ induces a $\mathrm{GL}(n,\Z)$-equivariant homomorphism
\[ \tau_k' : \mathrm{gr}^k (\mathcal{A}_n') \rightarrow H^* \otimes_{\Z} \mathcal{L}_n(k+1). \]
We also call $\tau_k'$ the Johnson homomorphism of $\mathrm{Aut}\,F_n$.
We remark that if we denote by $i_k : \mathrm{gr}^k(\mathcal{A}_n') \rightarrow \mathrm{gr}^k(\mathcal{A}_n)$ the homomorphism induced from the
inclusion $\mathcal{A}_n'(k) \hookrightarrow \mathcal{A}_n(k)$, then $\tau_k' = \tau_k \circ i_k$ for each $k \geq 1$.
Similarly to the sum $\tau$ of $\tau_k$s, the sum $\tau' := \oplus_{k \geq 1} \tau_k' : {\mathrm{gr}}(\mathcal{A}_n') \rightarrow \mathrm{Der}^{+}(\mathcal{L}_n)$
is a graded Lie algebra homomorphism.

\vspace{0.5em}

Let $\mathcal{C}_n(k)$ be a quotient module of $H^{\otimes k}$ by the action of cyclic group $\Cyc_k$ of order $k$ on the components: 
\[ \mathcal{C}_n(k) = H^{\otimes k} \big{/} \langle a_1 \otimes a_2 \otimes \cdots \otimes a_k - a_2 \otimes a_3 \otimes \cdots \otimes a_k \otimes a_1
   \,|\, a_i \in H \rangle. \]
In \cite{S11}, we determined the cokernel of the rational Johnson homomorphisms $\tau_k'$ in stable range. Namely, we have
\begin{thm}[Satoh, \cite{S11}]\label{T-S11}
For any $k \geq 2$ and $n \geq k+2$,
\[ \mathrm{Coker}(\tau_{k, \Q}') \cong \mathcal{C}_n^{\Q}(k). \]
\end{thm}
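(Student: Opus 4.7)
The plan is to construct an explicit $\GL(n,\Z)$-equivariant trace map
\[
\mathrm{Tr}_k : H^* \otimes_\Z \mathcal{L}_n(k+1) \longrightarrow \mathcal{C}_n(k),
\]
verify that $\mathrm{Tr}_k \circ \tau_k' = 0$, and prove that rationally $\ker(\mathrm{Tr}_{k,\Q}) = \Im(\tau_{k,\Q}')$. To define $\mathrm{Tr}_k$, use the Dynkin embedding $\iota_{k+1}: \mathcal{L}_n(k+1) \hookrightarrow H^{\otimes(k+1)}$ to view any element of $H^* \otimes \mathcal{L}_n(k+1)$ as an element of $H^* \otimes H \otimes H^{\otimes k}$; apply the natural evaluation $H^* \otimes H \to \Z$ to the first two factors, then project the resulting element of $H^{\otimes k}$ onto the cyclic quotient $\mathcal{C}_n(k)$. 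Surjectivity is checked by exhibiting explicit preimages of monomial classes via left-normed Lie brackets whose leading term in the Dynkin expansion is $x_{i_1}\otimes\cdots\otimes x_{i_k}$.

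The vanishing $\mathrm{Tr}_k \circ \tau_k' = 0$ rests on two points. First, since $\mathcal{A}_n'$ is the lower central series of $\mathrm{IA}_n$, the graded Lie algebra $\mathrm{gr}(\mathcal{A}_n')$ is generated as a Lie algebra by its degree one part, and by Magnus (together with later work) the rational degree one image is $\Im(\tau_{1,\Q}') = H^*_\Q \otimes \Lambda^2 H_\Q$. Second, the total $\tau' : \mathrm{gr}(\mathcal{A}_n') \to \mathrm{Der}^+(\mathcal{L}_n)$ is a graded Lie algebra homomorphism. Hence $\Im(\tau_{k,\Q}')$ is spanned by iterated derivation brackets of elements of $H^*_\Q \otimes \Lambda^2 H_\Q$, and a direct computation of $[D_1, D_2](x) = D_1(D_2(x)) - D_2(D_1(x))$ followed by the contraction defining $\mathrm{Tr}_k$ shows that the two contributions cancel modulo cyclic rotation of the remaining $k$ tensor factors.

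The main step, and the expected main obstacle, is the reverse inclusion $\ker(\mathrm{Tr}_{k,\Q}) \subset \Im(\tau_{k,\Q}')$. I would proceed by induction on $k$: given $\xi \in \ker(\mathrm{Tr}_{k,\Q})$, produce $\alpha \in H^*_\Q \otimes \Lambda^2 H_\Q$ and $\beta \in \Im(\tau_{k-1,\Q}')$ (available by the inductive hypothesis) such that $\xi - [\alpha,\beta]$ lies in a proper submodule of $\ker(\mathrm{Tr}_{k,\Q})$, filtered, say, by the number of distinct basis letters appearing. A complementary and perhaps cleaner route is a $\GL(n,\Q)$-character comparison: Witt's formula \eqref{ex-witt} gives the character of $H^*_\Q \otimes \mathcal{L}_n^\Q(k+1)$; the cyclic-invariant character of $\mathcal{C}_n^\Q(k) \cong (H_\Q^{\otimes k})^{\Cyc_k}$ is standard; and a PBW-type count controls the Lie subalgebra of $\mathrm{Der}^+(\mathcal{L}_n)_\Q$ generated by $H^*_\Q \otimes \Lambda^2 H_\Q$. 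Matching characters on both sides then forces the equality of kernel and image.

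The essential difficulty is that $\mathrm{gr}(\mathcal{A}_{n,\Q}')$ is not a free Lie algebra, so a naive PBW count has to be corrected by the known quadratic relations in $\mathrm{IA}_n$. The stability hypothesis $n \geq k+2$ enters crucially here: in stable range these corrections do not interfere with the cyclic trace, and enough linearly independent Magnus-type commutators are available to carry out either the explicit inductive reduction or the character match to completion.
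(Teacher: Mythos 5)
The paper states this result without proof, citing it from Satoh's earlier paper [S11]; there is no in-house argument to compare against. Your overall framework — define a $\GL(n,\Z)$-equivariant trace $\mathrm{Tr}_k$ via $\iota_{k+1}$ followed by contraction and cyclic projection, show $\mathrm{Tr}_k\circ\tau_k'=0$, and then identify $\ker(\mathrm{Tr}_{k,\Q})$ with $\Im(\tau_{k,\Q}')$ — is the right skeleton, and it matches the diagram the paper records after Proposition \ref{prop:Im} (the surjection $H^*_\Q\otimes\mathcal{L}_n^\Q(k+1)\twoheadrightarrow H_\Q^{\otimes k}\twoheadrightarrow\mathcal{C}_n^\Q(k)$ is precisely your contraction composed with the cyclic quotient). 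The surjectivity argument via left-normed brackets $e_j^*\otimes[e_j,e_{i_1},\dots,e_{i_k}]$ with $j\notin\{i_1,\dots,i_k\}$ is sound when $n\geq k+1$, and the vanishing on iterated brackets of $H^*_\Q\otimes\Lambda^2 H_\Q$ is a correct (if unsubstantiated) assertion. So far so good.

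The genuine gap is in the crucial inclusion $\ker(\mathrm{Tr}_{k,\Q})\subset\Im(\tau_{k,\Q}')$, and I want to flag two specific problems with the routes you sketch. First, the character-comparison route is circular: the character of $\ker(\mathrm{Tr}_{k,\Q})$ can indeed be computed from the explicit map, but the character of $\Im(\tau_{k,\Q}')$ is exactly the unknown you are trying to pin down, and a ``PBW-type count'' does not apply. The Lie subalgebra of $\mathrm{Der}^+(\mathcal{L}_n)_\Q$ generated by $H^*_\Q\otimes\Lambda^2 H_\Q$ is far from free: already in degree $2$ its ambient home $H^*_\Q\otimes\mathcal{L}_n^\Q(3)$ is much smaller than $\Lambda^2(H^*_\Q\otimes\Lambda^2 H_\Q)$, so the ``naive PBW count'' drastically overcounts and there is no off-the-shelf correction. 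Second, your diagnosis that the ``quadratic relations in $\mathrm{IA}_n$'' are what must be corrected for is actually a red herring: since $\mathrm{gr}(\mathcal{A}'_n)$ is generated in degree one and $\tau'$ is a Lie algebra map, $\Im(\tau'_{k,\Q})$ equals the degree-$k$ piece of the Lie subalgebra of $\mathrm{Der}^+(\mathcal{L}_n)_\Q$ generated by $\Im(\tau'_{1,\Q})$, \emph{regardless} of any relations in $\mathrm{IA}_n$ or in $\mathrm{gr}(\mathcal{A}'_n)$. Those relations affect $\ker(\tau')$, not the image. The obstacle is entirely internal to the derivation algebra, and saying ``in stable range these corrections do not interfere with the cyclic trace'' asserts precisely the difficult content of the theorem without argument. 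What is needed — and what Satoh supplies in [S11] — is an explicit construction, inside the Lie subalgebra generated by $H^*_\Q\otimes\Lambda^2 H_\Q$, of a spanning set for $\ker(\mathrm{Tr}_{k,\Q})$ in the stable range $n\geq k+2$, or equivalently a concrete lower bound on $\dim\Im(\tau'_{k,\Q})$ matching $\dim\ker(\mathrm{Tr}_{k,\Q})$; neither your inductive-filtration sketch nor the character match, as written, supplies that.
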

We also remark that in our previous paper \cite{ES}, we studied the $\GL$-irreducible decomposition of $\mathcal{C}_n^{\Q}(k)$. For more details, see Proposition \ref{prop:ES} and Proposition \ref{prop:mult}.

\vspace{0.5em}

Next, we consider the Johnson filtration of the mapping class group. By Dehn and Nielsen's classical work, we can consider $\M_{g,1}$ as a subgroup
of $\mathrm{Aut}\,F_{2g}$ as above. Under this embedding, set $\M_{g,1}(k) := \M_{g,1} \cap \A_{2g}(k)$ for each $k \geq 1$.
Then we have a descending filtration
\[ \mathcal{I}_{g,1} = \M_{g,1}(1) \supset \M_{g,1}(2) \supset \cdots \]
of the Torelli group $\mathcal{I}_{g,1}$. This filtration is called the Johnson filtration of $\M_{g,1}$. 
Set $\mathrm{gr}^k (\mathcal{M}_{g,1}) := \mathcal{M}_{g,1}(k)/\mathcal{M}_{g,1}(k+1)$.
For each $k \geq 1$, the mapping class group $\M_{g,1}$ acts on $\mathrm{gr}^k (\mathcal{M}_{g,1})$ by conjugation.
This action induces that of $\mathrm{Sp}(2g,\Z)=\M_{g,1}/\mathcal{I}_{g,1}$ on it.

\vspace{0.5em}

By an argument similar to that of $\mathrm{Aut}\,F_n$, the Johnson homomorphisms of $\M_{g,1}$ are defined as follows.
For $n=2g$ and $k \geq 1$, consider the restriction of
$\tilde{\tau}_k : \mathcal{A}_{2g}(k) \rightarrow \mathrm{Hom}_{\Z}(H,{\mathcal{L}}_{2g}(k+1))$ to $\M_{g,1}(k)$.
Then its kernel is just $\mathcal{M}_{g,1}(k+1)$. Hence we obtain an injective homomorphism
\[ \tau_k^{\M} : \mathrm{gr}^k (\mathcal{M}_{g,1}) \hookrightarrow \mathrm{Hom}_{\Z}(H, \mathcal{L}_{2g}(k+1))
       = H^* \otimes_{\Z} \mathcal{L}_{2g}(k+1). \]
The homomorphism $\tau_k^{\M}$ is $\mathrm{Sp}(2g,\Z)$-equivariant, and is called the $k$-th Johnson homomorphism of $\M_{g,1}$.
If we consider a $\GL(2g,\Z)$-module $H$ as a $\Sp(2g,\Z)$-module, then $H^* \cong H$ by the Poincar\'{e} duality.
Hence, in the following, we canonically identify the target $H^* \otimes_{\Z} \mathcal{L}_{2g}(k+1)$ of $\tau_k^{\M}$ with
$H \otimes_{\Z} \mathcal{L}_{2g}(k+1)$.

\vspace{0.5em}

Historically, the Johnson filtration of $\mathrm{Aut}\,F_n$ was originally studied by Andreadakis \cite{And} in 1960's as mentioned above.
On the other hand, the Johnson filtration and the Johnson homomorphisms of $\M_{g,1}$ were begun to study by D. Johnson \cite{Jo1} in 1980's
who determined the abelianization of the Torelli subgroup of the mapping class group of a surface in \cite{Jo4}. In particular, he showed that
$\mathrm{Im}(\tau_1^{\M}) \cong \Lambda^3 H$ as an $\Sp(2g,\Z)$-module, and it gives the free part of $H_1(\mathcal{I}_{g,1},\Z)$.

\vspace{0.5em}

Now, let us recall the fact that the image of $\tau_k^{\M}$ is contained in a certain $\Sp(2g,\Z)$-submodule of $H \otimes_{\Z} \mathcal{L}_{2g}(k+1)$,
due to Morita \cite{Mo1}.
In general, for any $n \geq 1$,
let $H \otimes_{\Z} \mathcal{L}_{n}(k+1) \rightarrow \mathcal{L}_{n}(k+2)$ be a $\GL(n,\Z)$-equivariant homomorphism defined by
\[ a \otimes X \mapsto [a,X], \hspace{1em} \mathrm{for} \hspace{1em} a \in H, \,\,\, X \in \mathcal{L}_{n}(k+1). \]
For $n=2g$, we denote by $\h_{g,1}(k)$ the kernel of this homomorphism:
\[ \h_{g,1}(k) := \mathrm{Ker}(H \otimes_{\Z} \mathcal{L}_{2g}(k+1) \rightarrow \mathcal{L}_{2g}(k+2)). \]
Then Morita \cite{Mo1} showed that the image $\mathrm{Im}(\tau_k^{\M})$ is contained in $\h_{g,1}(k)$.
Therefore, to determine how different is $\mathrm{Im}(\tau_k^{\M})$ from $\h_{g,1}(k)$ is one of the most basic problems. 
Throughout the paper, the cokernel $\mathrm{Coker}(\tau_k^{\M})$ of $\tau_k^{\M}$ always means the quotient $\Sp(2g,\Z)$-module $\h_{g,1}(k)/\mathrm{Im}(\tau_k^{\M})$.
So far, the $\Sp$-module structure of $\mathrm{Coker}(\tau_{k,\Q}^{\M})$ is determined for $1 \leq k \leq 4$ as follows.

{\small
\begin{center}
{\renewcommand{\arraystretch}{1.3}
\begin{tabular}{|c|l|l|l|} \hline
  $k$ & $\mathrm{Im}(\tau_{k,\Q}^{\M})$                     & $\mathrm{Coker}(\tau_{k,\Q}^{\M})$   &                         \\ \hline
  $1$ & $[1^3] \oplus [1]$                                  & $0$                                  & Johnson \cite{Jo1}      \\ \hline
  $2$ & $[2^2] \oplus [1^2] \oplus [0]$                     & $0$                                  & Morita \cite{Mo0}, Hain \cite{Hai} \\ \hline
  $3$ & $[3,1^2] \oplus [2,1]$                              & $[3]$                                & Asada-Nakamura \cite{AN}, Hain \cite{Hai}  \\ \hline
  $4$ & $[4,2] \oplus [3,1^3] \oplus [2^3] \oplus 2 [3,1] \oplus [2,1^2] \oplus 2[2]$  & $[2,1^2] \oplus [2]$    
                                                                                 & Morita \cite{Mo}  \\ \hline
\end{tabular}}
\end{center}
}

Morita \cite{Mo1} showed that the symmetric tensor product $S^k H_{\Q}$ appears in the $\Sp$-irreducible decomposition of $\mathrm{Coker}(\tau_{k,\Q}^{\M})$
for odd $k \geq 3$ using the Morita trace map. In general, however, to determine the cokernel of $\tau_k^{\M}$ is a difficult problem.

\vspace{0.5em}

Here, we recall a remarkable result of Hain.
As an $\Sp(2g,\Z)$-module, we consider $\h_{g,1}(k)$ as a submodule of the degree $k$ part $\mathrm{Der}(\mathcal{L}_n)(k)$ of the derivation algebra
of $\mathcal{L}_n$. On the other hand, the graded sum
\[ \h_{g,1} := \bigoplus_{k \geq 1} \h_{g,1}(k) \]
naturally has a Lie subalgebra structure of $\mathrm{Der}^+(\mathcal{L}_n)$. Therefore we obtain a graded Lie algebra homomorphism
\[ \tau^{\M} := \bigoplus_{k \geq 1} \tau_k^{\M} : {\mathrm{gr}}(\mathcal{M}_{g,1}) \rightarrow \h_{g,1}. \]
Then we have
\begin{thm}[Hain \cite{Hai}]\label{T-Hain}
The Lie subalgebra $\mathrm{Im}(\tau_{\Q}^{\M})$ is generated by the degree one part
$\mathrm{Im}(\tau_{1,\Q}^{\M}) = \Lambda^3 H_{\Q}$ as a Lie algebra.
\end{thm}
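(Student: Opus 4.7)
The plan is to follow Hain's Hodge-theoretic approach. Since $\tau^{\M}_\Q$ is an injective graded Lie algebra homomorphism (injective in each degree by construction of the Johnson filtration), proving that $\Im(\tau^{\M}_\Q)$ is generated by its degree-one part as a Lie algebra is equivalent to proving the analogous statement for the abstract graded Lie algebra $\mathrm{gr}_\Q(\M_{g,1})$. Thus I would first reduce the problem to showing that $\mathrm{gr}_\Q(\M_{g,1})$ is generated by $\mathrm{gr}^1_\Q(\M_{g,1}) \cong \Lambda^3 H_\Q$ as a Lie algebra.

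The main tool is the relative Malcev completion $\mathcal{G}$ of $\M_{g,1}$ along the symplectic representation $\mu_{\M}$, which sits in a short exact sequence $1 \to \mathcal{U} \to \mathcal{G} \to \Sp(2g,\Q) \to 1$ with $\mathcal{U}$ prounipotent. Let $\mathfrak{u}:=\mathrm{Lie}(\mathcal{U})$ be its pronilpotent Lie algebra. A standard Malcev-theoretic comparison identifies the graded Lie algebra $\mathrm{gr}\,\mathfrak{u}$ (with respect to the lower central series of $\mathfrak{u}$) with $\mathrm{gr}_\Q(\mathcal{I}_{g,1})$ as an $\Sp(2g,\Q)$-equivariant graded Lie algebra; combined with the separately established coincidence of the Johnson filtration and the lower central series of $\mathcal{I}_{g,1}$ over $\Q$, this yields $\mathrm{gr}\,\mathfrak{u} \cong \mathrm{gr}_\Q(\M_{g,1})$. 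It therefore suffices to show that $\mathfrak{u}$ is topologically generated, as a Lie algebra, by its abelianization.

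Next I would invoke the canonical mixed Hodge structure on $\mathfrak{u}$ arising from the interpretation of $\M_{g,1}$ as the orbifold fundamental group of the smooth Deligne--Mumford stack $\mathcal{M}_{g,1}$; this MHS is $\Sp$-equivariant and compatible with the Lie bracket, and its weight filtration satisfies $W_0\mathfrak{u}=0$. The crucial input is the weight purity of $H_1(\mathfrak{u}) = \mathfrak{u}^{\mathrm{ab}}$ in weight $-1$. Granting this, a formal argument in the category of pronilpotent Lie algebras in $\Sp$-equivariant MHS, using strictness of morphisms of MHS, shows that the weight filtration on $\mathfrak{u}$ coincides with its lower central series, and hence that $\mathrm{gr}\,\mathfrak{u}$ is generated as a Lie algebra by $\mathrm{gr}^W_{-1}\mathfrak{u} \cong H_1(\mathfrak{u})$. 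Combining this with Johnson's computation $H_1(\mathcal{I}_{g,1};\Q) \cong \Lambda^3 H_\Q$ completes the proof modulo the purity input.

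The main obstacle is precisely this weight purity of $H_1(\mathfrak{u})$, which is a genuinely geometric statement and not formally automatic. Hain proves it by combining Johnson's explicit identification of $H_1(\mathcal{I}_{g,1};\Q) \cong \Lambda^3 H_\Q$, which pins down what the weight $-1$ graded piece must be, with a Hodge-theoretic analysis of the relative completion using a smooth compactification of the universal curve over $\mathcal{M}_{g,1}$ and a careful accounting of the contributions of Dehn twists supported near boundary divisors. That a purely formal combinatorial substitute is unavailable is visible, for instance, from the fact that the analogous statement fails for $\mathrm{Aut}\,F_{2g}$ as evidenced by Morita's trace obstructions and Theorem \ref{T-S11}. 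Once purity is in hand, the remainder of the argument is formal Hodge/Malcev theory, and $\Im(\tau^{\M}_\Q)$ inherits generation in degree one from $\mathrm{gr}_\Q(\M_{g,1})$ via the injective Lie algebra map $\tau^{\M}_\Q$.
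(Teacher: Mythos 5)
First, a point of comparison: the paper does not prove Theorem \ref{T-Hain} at all --- it is imported verbatim from Hain's paper \cite{Hai} and used as a black box (together with its reformulation, Proposition \ref{prop:Im}). So your proposal can only be measured against Hain's original argument, not against anything in this text.

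As an outline of Hain's proof your sketch assembles the right ingredients (relative Malcev completion, the mixed Hodge structure on $\mathfrak{u}$, purity of $H_1(\mathfrak{u})$ in weight $-1$ via Johnson's computation $H_1(\mathcal{I}_{g,1};\Q)\cong\Lambda^3 H_{\Q}$, and the formal strictness argument identifying the weight filtration with the lower central series), but the logical order is circular at the decisive step. You identify $\mathrm{gr}\,\mathfrak{u}$ with $\mathrm{gr}_{\Q}(\M_{g,1})$ by invoking ``the separately established coincidence of the Johnson filtration and the lower central series of $\mathcal{I}_{g,1}$ over $\Q$.'' No such independent result is available: that coincidence is essentially the content of the theorem. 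Indeed, the associated graded of the lower central series of \emph{any} group is automatically generated in degree one, so if the two filtrations were already known to agree rationally, the theorem would follow in one line with no Hodge theory at all; conversely, in \cite{Hai} the rational agreement of the two filtrations is a \emph{consequence} of the completion-plus-purity argument, not an input to it. Relatedly, the identification of $\mathrm{gr}\,\mathfrak{u}$ with the associated graded of the Torelli group is not ``a standard Malcev-theoretic comparison'': $\mathcal{U}$ is the prounipotent radical of the completion relative to $\mu_{\M}$, not the Malcev completion of $\mathcal{I}_{g,1}$, and the surjectivity of the map from the Malcev Lie algebra of $\mathcal{I}_{g,1}$ onto $\mathfrak{u}$ (with central kernel) is itself one of the main theorems of \cite{Hai}, resting on the vanishing of $H^1(\Sp(2g,\Z);V)$ for nontrivial irreducible $V$. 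To make the sketch non-circular you must first prove that $\mathfrak{u}$ is generated in weight $-1$ via purity and the comparison $H_1(\mathfrak{u})\cong H_1(\mathcal{I}_{g,1};\Q)$, and only then deduce the statement about the Johnson-filtration graded pieces from the factorization of the Johnson homomorphisms through $\mathrm{gr}\,\mathfrak{u}$.
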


\vspace{0.5em}

Finally, we consider the lower central series of the Torelli group, and reformulate Hain's result above.
Let $\M_{g,1}'(k)$ be the lower central series of $\mathcal{I}_{g,1}$, and set
$\mathrm{gr}^k(\M_{g,1}') := \M_{g,1}'(k)/\M_{g,1}'(k+1)$ for $k \geq 1$.
Let ${\tau'_{k}}^{\M}
 : \mathrm{gr}^k(\M_{g,1}') \rightarrow H \otimes_{\Z} \mathcal{L}_{2g}(k+1)$ be an $\Sp$-equivariant homomorphism
induced from the restriction of $\tilde{\tau}_k$ to $\M_{g,1}'(k)$. Then we have
\begin{prop}[Hain, \cite{Hai}]\label{prop:Im}
 \ We have $\mathrm{Im}(\tau_{k,\Q}^{\M})=\mathrm{Im}(\mtau)$ for each $k \geq 1$. %\acute{\tau}_{k,\Q}^{\M})$ for each $k \geq 1$. 
\end{prop}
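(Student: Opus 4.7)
The plan is to deduce the equality of images directly from Hain's Theorem \ref{T-Hain}. Put $\mathfrak{g}_k := \mathrm{Im}(\tau_{k,\Q}^{\M})$ and $\mathfrak{g}_k' := \mathrm{Im}(\mtau)$, viewed as $\Sp$-submodules of $\mathfrak{h}_{g,1}^{\Q}(k)$, and form the graded sums $\mathfrak{g} := \bigoplus_{k \geq 1} \mathfrak{g}_k$ and $\mathfrak{g}' := \bigoplus_{k \geq 1} \mathfrak{g}_k'$ inside $\mathfrak{h}_{g,1}^{\Q}$. The goal is to show $\mathfrak{g} = \mathfrak{g}'$; the proposition then follows degree by degree.

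The first step is the trivial inclusion $\mathfrak{g}' \subset \mathfrak{g}$. The $\M_{g,1}$-analogue of Theorem \ref{T-And} (ii) (obtained by intersecting Andreadakis's relations with $\M_{g,1}$) asserts that the Johnson filtration $\{\M_{g,1}(k)\}$ is a descending central filtration of $\mathcal{I}_{g,1}$. By the universal property of the lower central series, this forces $\M_{g,1}'(k) \subset \M_{g,1}(k)$ for every $k$, so the inclusion $\M_{g,1}'(k) \hookrightarrow \M_{g,1}(k)$ induces a map $\mathrm{gr}^k(\M_{g,1}') \to \mathrm{gr}^k(\M_{g,1})$ intertwining $\mtau$ with $\tau_{k,\Q}^{\M}$, and therefore $\mathfrak{g}_k' \subset \mathfrak{g}_k$.

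The second step is that both $\mathfrak{g}$ and $\mathfrak{g}'$ are graded Lie subalgebras of $\mathfrak{h}_{g,1}^{\Q}$. Exactly as in the free-group situation reviewed in Subsection \ref{Ss-John}, the graded sums $\mathrm{gr}(\M_{g,1})$ and $\mathrm{gr}(\M_{g,1}')$ inherit graded Lie algebra structures from the commutator bracket on $\mathcal{I}_{g,1}$, and $\tau^{\M} = \bigoplus_k \tau_k^{\M}$ and ${\tau'}^{\M} = \bigoplus_k {\tau'_k}^{\M}$ are graded Lie algebra homomorphisms into $\mathrm{Der}^+(\mathcal{L}_{2g})$ because each $\tilde{\tau}_k$ satisfies the derivation identity. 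The third step pins down the degree one part: since $\M_{g,1}(1) = \M_{g,1}'(1) = \mathcal{I}_{g,1}$, both $\tau_1^{\M}$ and ${\tau'_{1}}^{\M}$ are induced by factoring the common map $\tilde{\tau}_1 : \mathcal{I}_{g,1} \to H \otimes_{\Z} \mathcal{L}_{2g}(2)$ through two different quotients of $\mathcal{I}_{g,1}$, so their images coincide; after rationalisation this image is $\Lambda^3 H_{\Q}$ by Johnson's theorem, giving $\mathfrak{g}_1 = \mathfrak{g}_1'$.

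Combining these, Theorem \ref{T-Hain} tells us that $\mathfrak{g}$ is generated as a Lie algebra by its degree one part $\mathfrak{g}_1$. Since $\mathfrak{g}_1 = \mathfrak{g}_1' \subset \mathfrak{g}'$ and $\mathfrak{g}'$ is a graded Lie subalgebra of $\mathfrak{h}_{g,1}^{\Q}$, we obtain $\mathfrak{g} \subset \mathfrak{g}'$, which with Step 1 yields $\mathfrak{g}_k = \mathfrak{g}_k'$ in every degree. There is no genuine obstacle: all the deep content sits in Hain's theorem, and the only point requiring a brief verification is that ${\tau'}^{\M}$ is a graded Lie algebra homomorphism, which is immediate from the derivation identity for $\tilde{\tau}_k$ combined with the compatibility of the brackets on $\mathrm{gr}(\M_{g,1}')$ and on $\mathfrak{h}_{g,1}$.
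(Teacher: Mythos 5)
Your proof is correct and follows the exact route the authors indicate in the introduction: the paper leaves the proposition without an explicit proof, merely observing that it is induced by Theorem \ref{T-Hain}, and your argument supplies the standard elaboration (containment of the lower central series in the Johnson filtration, both images being graded Lie subalgebras, agreement in degree one, and degree-one generation from Hain).
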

For $n=2g$, we have the following commutative diagram:
\[
\xymatrix{
    & \Im\tau_{k,\Q}' \ar@{^{(}->}[rr]   &  
    &   H_{\Q}^* \otimes_{\Q} \mathcal{L}_{2g}^{\Q}(k+1) \ar@{->>}[r] & H_{\Q}^{\otimes{k}} \ar@{->>}[r] & \mathcal{C}_{2g}^{\Q}(k) \\
  \Im \tau_{k,\Q}^{\M} \ar@{=}[r] & \Im \mtau% \acute{\tau}_{k,\Q}^{\M}
 \ar@{^{(}->}[u] \ar@{^{(}->}[r] 
    & \h_{g,1}^{\Q}(k) \ar@{^{(}->}[r] & H_{\Q} \otimes_\Q \mathcal{L}_{2g}^{\Q}(k+1) \ar@{->>}[rr]\ar[u]_{\wr} & & \mathcal{L}_{2g}^{\Q}(k+1)
}
\]

\section{Highest weight theory for $\Sp(2g,\Q)$}
\subsection{Irreducible highest weight modules for $\Sp(2g,\Q)$}
\quad Let us consider the general linear group $\GL(n,\Q)$ and the symplectic group
\[ \Sp(2g,\Q):=\{X \in \GL(2g,\Q) \ | \ {}^{t}XJX=J\} \,\,\, \mathrm{for} \,\,\, J=\left(
\begin{array}{cc}
0 & I_g \\
-I_g & 0
\end{array}
\right) \]
where $I_g$ is the identity matrix of degree $g$. 
We fix a maximal torus
\[ T_n=\{\diag(x_1, \ldots ,x_n) \ | \ x_j \neq 0, \ 1 \le j \le n \} \] of $\GL(n,\Q)$.
The intersection $\Sp(2g,\Q) \cap T_{2g}=\{\diag(x_1, \ldots ,x_n,x_n^{-1}, \ldots ,x_1^{-1})\}$ gives a maximal torus of $\Sp(2g,\Q)$.
We also fix this maximal torus and write $T_{2g}^{Sp}$. \\
\quad We define one-dimensional representations $\varepsilon_i$ of $T_n$ by $\varepsilon_i(\diag(x_1, \ldots ,x_n))=x_i$. Then 
\begin{eqnarray*}
P_{\GL(n,\Q)}&:=&\{\lambda_1\varepsilon_1+ \cdots +\lambda_n\varepsilon_n \ | \ \lambda_i \in \bb{Z}, \ 1 \le i \le n \}\cong \bb{Z}^n, \\
P^+_{\GL(n,\Q)}&:=&\{\lambda_1\varepsilon_1+ \cdots +\lambda_n\varepsilon_n \in P_{\GL(n,\Q)}\ | \ \lambda_1 \ge \lambda_2 \ge \cdots \ge \lambda_n\}
\end{eqnarray*}
give the weight lattice and the set of dominant integral weights of $\GL(n,\Q)$ respectively.
If $n=2g$, we can restrict $\varepsilon_i$ to $T_{2g}^{Sp}$ for $1 \le i \le g$. Then
\begin{eqnarray*}
P_{\Sp(2g,\Q)}&:=&\{\lambda_1\varepsilon_1+ \cdots +\lambda_g\varepsilon_g \ | \ \lambda_i \in \bb{Z}, \ 1 \le i \le g \}\cong \bb{Z}^g, \\
P^+_{\Sp(2g,\Q)}&:=&\{\lambda_1\varepsilon_1+ \cdots +\lambda_g\varepsilon_g \in P_{\Sp(2g,\Q)}\ | \ \lambda_1 \ge \lambda_2 \ge \cdots \ge \lambda_g \ge 0\}
\end{eqnarray*}
give the weight lattice and the set of dominant integral weights of $\Sp(2g,\Q)$ respectively.
In particular, there exists a bijection between $P^+_{\Sp(2g,\Q)}$ and the set of partitions such that $\ell(\lambda) \le g$. \\
\quad Let $G$ be a classical group $\GL(n,\Q)$ or $\Sp(2g,\Q)$, $T$ its fixed maximal torus, $P$ its weight lattice and
$P^+$ the set of dominant integral weight with respect to $T$.
For a rational representation $V$ of $G$, there exists an irreducible decomposition $V=\bigoplus_{\lambda \in P}V_\lambda$ as a $T$-module
where $V_\lambda:=\{v \in V \ | \ tv=\lambda(t)v \ \text{for any} \ t \in T\}$. We call this decomposition a weight decomposition of $V$ with respect to $T$.
If $V_\lambda \neq \{0\}$, then we call $\lambda$ a weight of $V$. For a weight $\lambda$,
a non-zero vector $v \in V_{\lambda}$ is call a weight vector of weight $\lambda$. \\
\quad Let $U$ be the subgroup of $G$ consists of all upper unitriangular matrices in $G$. For a rational representation $V$ of $G$,
we define $V^U:=\{v \in V \ | \ uv=v \ \text{for all} \ u \in U\}$. We call a non-zero vector $v \in V^U$ a maximal vector of $V$. This subspace $V^U$ is $T$-stable. Thus, as a $T$-module, $V^U$ has a irreducible decomposition $V^U=\bigoplus_{\lambda \in P}V^U_\lambda$ where $V^U_\lambda:=V^U \cap V_\lambda$.
\begin{thm}[Cartan-Weyl's highest weight theory] \quad 
\begin{enumerate}[$(i)$]
\item Any rational representation of $V$ is completely reducible.
\item Suppose $V$ is an irreducible rational representation of $G$. Then $V^U$ is one-dimensional, and the weight $\lambda$ of $V^U=V_\lambda^U$ belongs to $P^+$.
We call this $\lambda$ the highest weight of $V$, and any non-zero vector $v \in V^U_\lambda$ is called a highest weight vector of $V$. 
\item For any $\lambda \in P^+$, there exists a unique (up to isomorphism) irreducible rational representation $L^\lambda$ of $G$
with highest weight $\lambda$. Moreover, for two $\lambda, \mu \in P^+$, $L^\lambda \cong L^\mu$ if and only if $\lambda=\mu$.
\item The set of isomorphism classes of irreducible rational representations of $G$ is parametrized by the set $P^+$ of dominant integral weights.
\item Let $V$ be a rational representation of $G$ and $\chi_V$ a character of $V$ as a $T$-module.
Then for two rational representation $V$ and $W$, they are isomorphic as $G$-modules if and only if $\chi_V=\chi_W$.
\end{enumerate}
\end{thm}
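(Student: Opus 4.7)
The plan is to establish the five parts in order, using the standard machinery for reductive groups and their Lie algebras over $\Q$. For part $(i)$ complete reducibility, the approach is Weyl's unitarian trick: $G$ embeds in its complexification $G_\C$ (either $\GL(n,\C)$ or $\Sp(2g,\C)$), inside which a maximal compact subgroup $K$ sits as a Zariski-dense subset. A rational representation of $G$ extends to a holomorphic representation of $G_\C$, and its restriction to $K$ admits a $K$-invariant positive definite Hermitian form by averaging against Haar measure. Orthogonal complements of $K$-submodules give splittings, and Zariski density of $K \subset G_\C$ promotes these splittings to $G_\C$-invariant (hence $G$-invariant) ones; a Galois descent from $\C$ to $\Q$ then yields complete reducibility of $\Q$-rational representations.

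For $(ii)$, since the unipotent group $U$ has $1$ as its only eigenvalue on any finite-dimensional representation (Lie--Kolchin), we have $V^U \neq \{0\}$ whenever $V \neq \{0\}$. The torus $T$ preserves $V^U$, so $V^U$ decomposes into weight spaces; pick a maximal vector $v \in V^U_\lambda$. For each simple root $\alpha_i$, the associated $\mf{sl}_2$-triple $(e_i, h_i, f_i)$ acts on $V$, and $e_i v = 0$ together with finite-dimensional $\mf{sl}_2$-representation theory forces $\lambda(h_i) \in \Z_{\geq 0}$, so $\lambda \in P^+$. When $V$ is irreducible, $v$ generates $V$ as a $\mf{g}$-module, and by the Poincar\'{e}--Birkhoff--Witt theorem $V$ is spanned by applications of monomials in the opposite nilpotent subalgebra $\mf{n}^-$ to $v$. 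Since $\mf{n}^-$ strictly lowers weights, $\lambda$ is the unique maximal weight of $V$ and $V_\lambda = \Q v$, so $V^U = V_\lambda^U$ is one-dimensional.

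For $(iii)$, uniqueness follows from the standard diagonal argument: if $L, L'$ are two irreducibles of highest weight $\lambda$ with maximal vectors $v, v'$, the submodule of $L \oplus L'$ generated by $(v, v')$ projects isomorphically onto each summand, yielding $L \cong L'$. For existence, take the Verma module $M(\lambda)$ induced from the one-dimensional $\mf{b}$-module of weight $\lambda$, and pass to its unique irreducible quotient $L(\lambda)$; a simple-root-by-simple-root $\mf{sl}_2$-reduction, using that $f_i^{\lambda(h_i)+1}$ kills a highest weight vector when $\lambda(h_i) \in \Z_{\geq 0}$, shows that $L(\lambda)$ is finite-dimensional exactly when $\lambda \in P^+$. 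Part $(iv)$ then simply records the bijection $\lambda \mapsto [L(\lambda)]$ between $P^+$ and isomorphism classes of irreducible rational $G$-modules obtained by combining $(ii)$ and $(iii)$.

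For $(v)$, complete reducibility gives $V \cong \bigoplus_\lambda L(\lambda)^{\oplus m_\lambda}$, so $\chi_V = \sum_\lambda m_\lambda \chi_{L(\lambda)}$. The characters $\chi_{L(\lambda)}$ are linearly independent in $\Z[P]$ because each $\chi_{L(\lambda)}$ has $e^\lambda$ as its unique maximal term in the dominance order, so $\chi_V$ determines all the multiplicities $m_\lambda$ and hence $V$ up to isomorphism. The main obstacle is complete reducibility in $(i)$: the passage from holomorphic $G_\C$-representations to $\Q$-rational $G$-representations requires the unitarian trick together with Zariski density and descent, whereas parts $(ii)$--$(v)$ are essentially formal consequences of $(i)$ together with Verma module theory and the $\mf{sl}_2$-calculus that controls dominant integral weights.
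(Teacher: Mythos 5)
The paper itself offers no proof of this theorem: it is quoted as classical background (Cartan--Weyl highest weight theory) and used as a black box, so there is nothing to compare against line by line. Your outline is the standard textbook argument and is essentially correct: the unitarian trick plus descent of semisimplicity along the field extension $\Q \subset \C$ for $(i)$ (note that $\C/\Q$ is not a Galois extension, but the relevant fact --- that semisimplicity of $V \otimes_{\Q} \C$ implies semisimplicity of $V$ --- follows from $\mathrm{rad}(V) \otimes_{\Q} \C \subseteq \mathrm{rad}(V \otimes_{\Q} \C)$ and needs no Galois theory); Lie--Kolchin, the $\mf{sl}_2$-calculus and PBW for $(ii)$; the diagonal submodule of $L \oplus L'$ and Verma modules for $(iii)$; and triangularity of irreducible characters with respect to dominance order for $(v)$. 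The one place where a careful write-up needs an extra step is the existence half of $(iii)$: the Verma-module construction produces an irreducible finite-dimensional representation of the complexified Lie algebra, and one must still verify that it integrates to the group and is defined over $\Q$ as a \emph{rational} representation of $G = \GL(n,\Q)$ or $\Sp(2g,\Q)$. For these two groups this is most cleanly done by exhibiting $L^{\lambda}$ inside $H_{\Q}^{\otimes k}$ (twisted by a power of $\det$ in the $\GL$ case), exactly as the paper does later via the Schur--Weyl and Brauer--Schur--Weyl decompositions; your sketch omits this rationality point, but it is a routine supplement rather than a flaw in the strategy.
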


\begin{rem}
We can parametrize the set of isomorphism classes of irreducible rational representations of $\GL(n,\Q)$ by $P^+_{\GL(n,\Q)}$.
On the other hand, we define the determinant representation by $\det^e:\mathrm{GL}(n,\Q) \ni X \to \det{X}^e \in \Q ^\times$.
The highest weight of this representation is given by $(e,e, \cdots ,e) \in P_{\GL(n,\Q)}^+$.
If $\lambda \in P^+$ satisfies $\lambda_n<0$,
then $L^\lambda \cong \det^{-\lambda_n} \otimes L^{(\lambda_1-\lambda_n,\lambda_2-\lambda_n, \ldots ,0)}$.
Moreover the set of isomorphism classes of polynomial irreducible representations is parametrized by the set of partitions $\lambda$
such that $\ell(\lambda) \le n$.
We denote the polynomial representations corresponding to a partition $\lambda$ by $L_{\GL}^{\lambda}$, $L^{(\lambda)}$ or simply $(\lambda)$. 
\end{rem}

\begin{rem}
We can parametrize the set of isomorphism classes of irreducible rational representations of $\Sp(2g,\Q)$
by $P^+_{\Sp(2g,\Q)}\cong\{\lambda_1 \ge \lambda_2 \ge \cdots \ge \lambda_g \ge 0 \ | \ \lambda_i \in \bb{Z}, 1 \le i \le n\}$,
namely the set of partitions $\lambda$ such that $\ell(\lambda) \le g$.
In this paper, we denote the irreducible representation corresponding to $\lambda$ by $L_{\Sp}^{\lambda}$, $L^{[\lambda]}$ or simply $[\lambda]$. \\
\quad Note that the natural representation $H_{\Q}=\Q^{2g}$ of $\Sp(2g,\Q)$ is irreducible with highest weight $(1,0, \ldots ,0)$ and $H_{\Q}^* \cong H_{\Q}$
by the Poincar\'{e} duality. More precisely, we set $i':=2g-i+1$ for each integer $1 \le i \le 2g$. Then for the standard basis $\{e_i\}_{i=1}^{2g}$ of $H_{\Q}$, we see
\begin{eqnarray}
\langle e_i,e_j\rangle=0=\langle e_{i'},e_{j'}\rangle,\quad 
\langle e_i,e_{j'}\rangle=\delta_{ij}=-\langle e_{j'},e_{i}\rangle, \quad (1 \le i \le g). \label{innerp}
\end{eqnarray}
There is an isomorphism $H_\Q \to H^*_\Q$ as $\Sp(2g,\Q)$-modules given by
\begin{eqnarray}
H_\Q \ni v \mapsto \langle \bullet, v\rangle \in H^*_\Q. \label{isomH}
\end{eqnarray}
In general, all irreducible rational representation $[\lambda]$ is isomorphic to its dual. 
\end{rem}
Let us recall Pieri's formula, the simplest version of the decomposition of tensor product representations.
For two partition $\lambda$ and $\mu$ satisfying $\lambda \supset \mu$, the skew shape $\lambda \backslash \mu$ is a vertical strip
if there is at most one box in each row.
\begin{thm}[Pieri's formula]\label{Pieri}
Let $\mu$ be a partition such that $\ell(\mu) \le n$.
Then
\[ L_{\GL}^{(1^k)} \otimes L_{\GL}^{\mu} \cong \bigoplus_{\lambda}L_{\GL}^\lambda, \]
where $\lambda$ runs over the set of partitions obtained by adding a vertical
$k$-strip to $\mu$ such that $\ell(\lambda) \le n$.
\end{thm}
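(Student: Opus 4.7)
The plan is to reduce the tensor product decomposition to a classical identity among symmetric polynomials via characters. By part (v) of the highest weight theorem quoted just above, two rational $\GL(n,\Q)$-representations are isomorphic if and only if their characters on the maximal torus $T_n$ coincide. The character of the irreducible $L_{\GL}^{\mu}$ evaluated at $\diag(x_1,\ldots,x_n)$ is the Schur polynomial $s_{\mu}(x_1,\ldots,x_n)$, and the character of $L_{\GL}^{(1^k)} = \Lambda^{k} H_{\Q}$ is the elementary symmetric polynomial $e_k(x_1,\ldots,x_n)$. Since characters are multiplicative on tensor products, the theorem is equivalent to the symmetric-function identity
\[
 e_k(x_1,\ldots,x_n)\cdot s_\mu(x_1,\ldots,x_n) \;=\; \sum_{\lambda} s_\lambda(x_1,\ldots,x_n),
\]
where the sum is over partitions $\lambda \supset \mu$ with $|\lambda|-|\mu|=k$, $\lambda/\mu$ a vertical strip, and $\ell(\lambda)\le n$.

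To establish this identity I would use the combinatorial description of Schur polynomials as generating functions over semistandard Young tableaux. Since $e_k = s_{(1^k)}$ is the generating function for column-strict fillings of a single column of length $k$, the product $e_k\cdot s_\mu$ enumerates ordered pairs $(T,S)$ where $T$ is a semistandard tableau of shape $\mu$ in the alphabet $\{1,\ldots,n\}$ and $S=(i_1<\cdots<i_k)$ is a strictly increasing sequence from the same alphabet. I would then construct an explicit bijection between such pairs and semistandard tableaux of shapes $\lambda$ where $\lambda/\mu$ is a vertical $k$-strip, by adjoining $i_1,\ldots,i_k$ one at a time into distinct rows of $T$, subject to the row-weak and column-strict inequalities, using an insertion algorithm (essentially a restricted RSK/Pieri insertion). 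The weight-preservation and reversibility of the bijection then yield the identity. The restriction $\ell(\lambda)\le n$ is automatic on the character side, since $s_\lambda$ with $\ell(\lambda)>n$ vanishes identically in $n$ variables.

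Alternatively, a purely algebraic proof proceeds through the dual Jacobi-Trudi formula $s_\mu = \det\bigl(e_{\mu'_i-i+j}\bigr)_{1\le i,j\le \ell(\mu')}$: multiplying a row of this determinant by $e_k$ and applying column operations together with the relation $e_a e_b = e_{a+b}\binom{a+b}{a} \pmod{\text{lower-rank products}}$ rearranges the result into a sum of Schur determinants indexed precisely by vertical-strip extensions of $\mu$. The main obstacle is the bookkeeping for either the insertion bijection or the determinantal manipulation; both are standard in symmetric-function theory, and I would simply cite the classical treatment (e.g.\ Macdonald, \emph{Symmetric Functions and Hall Polynomials}, Chapter I, \S 5, or Fulton--Harris, Appendix A) for the verification, emphasizing the reduction-by-characters as the only representation-theoretic input.
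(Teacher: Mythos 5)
The paper states Pieri's formula purely as a piece of classical background and gives no proof of it, so there is no in-paper argument to compare yours against. Your primary route is the standard one and is correct: by part (v) of the Cartan--Weyl theorem quoted just above, it suffices to verify the character identity $e_k(x_1,\ldots,x_n)\,s_\mu(x_1,\ldots,x_n)=\sum_\lambda s_\lambda(x_1,\ldots,x_n)$ over vertical-strip extensions $\lambda$ of $\mu$ with $\ell(\lambda)\le n$, and this is exactly the dual Pieri rule for Schur polynomials, provable by the SSYT bijection you sketch or simply quotable from Macdonald or Fulton--Harris. The observation that the length restriction is automatic because $s_\lambda$ vanishes in $n$ variables when $\ell(\lambda)>n$ is also correct and worth stating.

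One caveat: in your ``alternative'' determinantal sketch you invoke a relation $e_a e_b = e_{a+b}\binom{a+b}{a}$ modulo lower-rank products. No such relation exists --- the elementary symmetric polynomials $e_1,\ldots,e_n$ are algebraically independent, so products $e_a e_b$ satisfy no congruences of this kind. The genuine proof of the dual Pieri rule via the dual Jacobi--Trudi determinant $s_\mu=\det(e_{\mu'_i-i+j})$ works instead by expanding $e_k\cdot\det(\cdots)$ using the recursion $e_k = \sum_j e_{k-j}\cdot(\text{column shifts})$ and cancelling determinants with repeated columns; the bookkeeping is genuinely different from what you describe. Since you present this only as an alternative and your main argument stands on its own (and you explicitly defer the combinatorial verification to the standard references), the proposal as a whole is acceptable, but you should either repair or delete that parenthetical relation.
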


\subsection{Branching rules from $\GL(2g,\Q)$ to $\Sp(2g,\Q)$}
We regard $\Sp(2g,\Q)$ as a subgroup of $\GL(2g,\Q)$.
We consider the restriction of an irreducible polynomial representation $L^{\lambda}_{\GL}$ to $\Sp(2g,\Q)$.
We can give its irreducible decomposition using the Littlewood-Richardson coefficients $\LR_{\lambda\mu}^{\nu}$ as follows.
\begin{thm}[{\cite[25.39]{FH},\cite[Proposition 2.5.1]{KT}}]\label{thm:brspgl}
Let $\lambda=(\lambda_1 \ge \lambda_2 \ge \cdots \ge \lambda_{g}\ge 0)$ be a partition such that $\ell(\lambda) \le g$. Then we have
\[
\Res_{\Sp(2g,\Q)}^{\GL(2g,\Q)}(L_{\GL}^{\lambda}) \cong \bigoplus_{\bar{\lambda}}N_{\lambda\bar{\lambda}}L^{\bar{\lambda}}_{Sp}
\]
where $\bar{\lambda}$ runs over all partitions such that $\ell(\bar{\lambda}) \le g$. Here 
\[
N_{\lambda\bar{\lambda}}=\sum_{\eta}\LR_{\eta\bar{\lambda}}^{\lambda}
\]
where $\eta$ runs over all partitions $\eta=(\eta_1=\eta_2 \ge \eta_3=\eta_4 \ge \cdots)$ with each part occurring an even number of times,
namely $\eta'$ even. Here $\eta'$ is a conjugate partition of $\eta$.
\end{thm}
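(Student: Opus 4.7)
The plan is to reduce the branching rule to an identity of characters on the maximal torus $T_{2g}^{\Sp}$ and then deduce it from Littlewood's classical symmetric function identities. By the highest-weight theorem stated in the previous subsection, a finite-dimensional rational $\Sp(2g,\Q)$-module is determined up to isomorphism by its character on $T_{2g}^{\Sp}$. Since the character of $\Res_{\Sp(2g,\Q)}^{\GL(2g,\Q)} L^\lambda_{\GL}$ is obtained from the Weyl character formula for $\GL(2g,\Q)$ by restriction to $T_{2g}^{\Sp} \subset T_{2g}$, and hence equals the Schur polynomial $s_\lambda(x_1,\ldots,x_g,x_1^{-1},\ldots,x_g^{-1})$, and since the character of $L^{\bar\lambda}_{\Sp}$ is the symplectic Schur polynomial $sp_{\bar\lambda}(x_1,\ldots,x_g)$, the content of the theorem is the identity of Laurent polynomials
\[
s_\lambda(x_1,\ldots,x_g,x_1^{-1},\ldots,x_g^{-1}) = \sum_{\bar\lambda} \Bigl(\sum_{\eta:\,\eta'\text{ even}} \LR^{\lambda}_{\eta\bar\lambda}\Bigr)\, sp_{\bar\lambda}(x_1,\ldots,x_g),
\]
the sum on the right running over partitions $\bar\lambda$ with $\ell(\bar\lambda)\le g$.

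To establish this identity, I would combine three classical inputs: the Cauchy identity $\sum_\mu s_\mu(x)s_\mu(y) = \prod_{i,j}(1-x_iy_j)^{-1}$, Littlewood's identity $\sum_{\eta:\,\eta'\text{ even}} s_\eta(x) = \prod_{i<j}(1-x_ix_j)^{-1}$, and the Littlewood--Richardson rule $s_\eta\,s_{\bar\lambda} = \sum_\lambda \LR^{\lambda}_{\eta\bar\lambda}\,s_\lambda$. Substituting $y = (y_1,\ldots,y_g,y_1^{-1},\ldots,y_g^{-1})$ in the Cauchy identity, the right-hand side becomes $\prod_{i,j}(1-x_iy_j)^{-1}(1-x_iy_j^{-1})^{-1}$, which I would factor as the Weyl denominator of $\Sp(2g,\Q)$ times the Littlewood factor $\prod_{i<j}(1-x_ix_j)^{-1}$; Littlewood's identity then rewrites the latter as $\sum_{\eta'\text{ even}} s_\eta(x)$. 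Applying the $\Sp$-Weyl character formula to extract coefficients of $sp_{\bar\lambda}(y)$ on one side and the Littlewood--Richardson expansion of $s_\eta\,s_{\bar\lambda}$ on the other, and comparing coefficients of $s_\lambda(y)$, isolates exactly the multiplicity $N_{\lambda\bar\lambda} = \sum_{\eta:\,\eta'\text{ even}}\LR^{\lambda}_{\eta\bar\lambda}$.

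The main technical hurdle is the bookkeeping around the $\Sp$-Weyl denominator, together with controlling partitions of length exceeding $g$: Littlewood's identity is most naturally formulated in infinitely many variables, but upon specialising to $g$ variables one must show that contributions from $\eta$ or $\bar\lambda$ of length $>g$ do not corrupt the answer. Under the hypothesis $\ell(\lambda)\le g$, however, non-vanishing of $\LR^{\lambda}_{\eta\bar\lambda}$ forces $\ell(\eta),\ell(\bar\lambda) \le g$, so the truncation is harmless and the formula is clean. An alternative route that sits more naturally inside the Brauer--Schur--Weyl philosophy used elsewhere in this paper is to compare the $(\GL(2g,\Q),\mathfrak{S}_k)$-decomposition of $H_\Q^{\otimes k}$ from classical Schur--Weyl duality with its $(\Sp(2g,\Q), B_k(-2g))$-Brauer decomposition, and to read off the multiplicities $N_{\lambda\bar\lambda}$ directly from the representation theory of the Brauer algebra via up-down tableaux combinatorics.
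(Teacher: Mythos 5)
The paper does not actually prove this statement: it is quoted as a classical theorem of Littlewood type, with the proof delegated to the cited references (Fulton--Harris, Ex.~25.39, and Koike--Terada, Prop.~2.5.1). So there is no in-paper argument to compare against; what you have written is essentially the standard symmetric-function proof that those references give. Your reduction to a character identity on $T^{\Sp}_{2g}$ is justified by part (v) of the Cartan--Weyl theorem as stated in the paper, your three inputs (Cauchy, Littlewood's identity $\sum_{\eta'\ \mathrm{even}} s_\eta=\prod_{i<j}(1-x_ix_j)^{-1}$, and the LR rule) are the right ones, and your observation that $\ell(\lambda)\le g$ forces $\ell(\eta),\ell(\bar\lambda)\le\ell(\lambda)\le g$ is precisely why the Koike--Terada modification rules are not needed and the formula holds in this clean form.

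One step is stated too loosely to survive as written: the kernel $\prod_{i,j}(1-x_iy_j)^{-1}(1-x_iy_j^{-1})^{-1}$ does \emph{not} factor as an $\Sp$ Weyl denominator in $y$ times $\prod_{i<j}(1-x_ix_j)^{-1}$. The correct intermediate fact is the symplectic Cauchy identity
\[
\sum_{\ell(\bar\lambda)\le g} s_{\bar\lambda}(x_1,\dots,x_g)\, sp_{\bar\lambda}(y)
=\frac{\prod_{i<j}(1-x_ix_j)}{\prod_{i,j}(1-x_iy_j)(1-x_iy_j^{-1})},
\]
which is itself a nontrivial classical identity (proved via the Weyl character formula and a determinant evaluation, or bijectively via oscillating tableaux). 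Granting it, your argument closes exactly as you indicate: expand the ordinary Cauchy kernel at $y\mapsto(y,y^{-1})$ as $\sum_\mu s_\mu(x)s_\mu(y,y^{-1})$, multiply the displayed identity by $\sum_{\eta'\ \mathrm{even}}s_\eta(x)$, apply the LR rule, and compare coefficients of $s_\mu(x)$ in $g$ variables (which are linearly independent for $\ell(\mu)\le g$). Your alternative Brauer--Schur--Weyl route is also viable and is consistent with Ram's character formula quoted later in the paper, but it only addresses $\lambda\vdash k$ for a fixed tensor degree at a time and ultimately rests on the same combinatorics, so the symmetric-function route is the more direct proof of the statement as given.
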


\begin{rem}\label{rem:LR}
We give a combinatorial description of the Littlewood-Richardson coefficients. (e.g. \cite{FH}, \cite{Mac}.)
For two Young diagrams $\lambda$ and $\mu$ satisfying $\lambda \subset \mu$,
we denote by $\lambda \backslash \mu$ a skew Young diagram, which is the difference of $\lambda$ and $\mu$. 
For a skew Young diagram $\lambda \backslash \mu$ of size $m$, a semistandard tableau of shape $\lambda \backslash \mu$ is an array $T$
of positive integers $1,2, \ldots ,m$ of shape $\lambda \backslash \mu$ that is weakly increasing in every row and strictly increasing in every column.
\begin{enumerate}[(i)]
\item For two partitions $\lambda \supset \mu$, a \textit{semi-standard tableau} on $\lambda \backslash \mu$ is a numbering on $\lambda \backslash \mu \to \bb{Z}_{\ge 1}$ such that the numbers inserted in $\lambda \backslash \mu$ must increase strictly down each column and weakly from left to right along each row. For a semistandard tableau on $\lambda \backslash \mu$, we denote the number of $i$ appearing in this semistandard tableau by $m_i$. We call $(m_1,m_2, \ldots)$ a \textit{weight} of the semistandard tableau. 
\item For a semistandard tableau $T$ on $\lambda \backslash \mu$, we define a sequence $w(T)$ of integers by reading the numbers inserted in $\lambda \backslash \mu$ from right to left in successive rows, starting with top row.
\item For a sequence $w=(a_1a_2 \cdots )$, we denote the number of $i$ appearing in a subsequence $(a_1a_2 \cdots a_r)$ by $m_i(a_1a_2 \cdots a_r)$. A sequence $w$ is a \textit{lattice permutation} if $m_1(a_1a_2 \cdots a_r) \ge m_2(a_1a_2 \cdots a_r) \ge  \cdots $ for any $r\ge 1$. 
\end{enumerate}
The Littlewood-Richardson coefficients $\LR_{\mu\nu}^\lambda$ is the number of semi-standard tableaux $T$ on $\lambda \backslash \mu$ with weight $\nu$ such that $w(T)$ is a lattice permutation. 
\end{rem}

\subsection{Review on the classical Schur-Weyl duality}
\quad For the natural representation $H_{\Q} \cong L^{(1)}$ of $\mathrm{GL}(n,\Q)$, we consider the $k$-th tensor product representation
$\rho_k:\mathrm{GL}(n,\Q) \to \mathrm{GL}(H_{\Q}^{\otimes{k}})$ of $H_{\Q}$.
For each $k \geq 1$,
the symmetric group $\mf{S}_{k}$ of degree $k$ naturally acts on
the space $H_\Q^{\otimes{k}}$ from the right as a permutation of the components.
Since these two actions are commutative, we can decompose $H_{\Q}^{\otimes{k}}$ as a $(\mathrm{GL}(n,\Q) \times \mf{S}_k)$-module.
Let us recall this irreducible decomposition, called the Schur-Weyl duality for $\mathrm{GL}(n,\Q)$ and $\mf{S}_k$.
\begin{thm}[Schur-Weyl's duality for $\mathrm{GL}(n,\Q)$ and $\mf{S}_k$]\label{thm:SWgl}\quad 
\begin{enumerate}[(i)]
\item Let $\lambda$ be a partition of $k$ such that $\ell(\lambda) \le n$. 
There exists a non-zero maximal vector $v_\lambda$ with weight $\lambda$ satisfying the following three conditions:
\begin{enumerate}[(a)]
\item The $\mf{S}_k$-invariant subspace $S^\lambda:=\sum_{\sigma \in \mf{S}_k}\Q{v_\lambda}\cdot \sigma$ gives an irreducible representation of $\mf{S}_k$.
\item The subspace $(H_{\Q}^{\otimes{k}})^U_\lambda$ of weight $\lambda$ coincides with the subspace $S^\lambda$, where $U$ is the fixed unipotent subgroup of $\GL(n,\Q)$
      consisting of upper unitriangular matrices.
\item The $\mathrm{GL}(n,\Q)$-module generated by $v_\lambda$ is isomorphic to the irreducible representation $L^{(\lambda)}_{\GL}$ of $\mathrm{GL}(n,\Q)$
      with highest weight $\lambda$.
\end{enumerate}
\item We have the irreducible decomposition:
\[
H_{\Q}^{\otimes{k}} \cong \bigoplus_{\lambda=(\lambda_1 \ge \cdots \ge \lambda_n\ge 0) \vdash k}L^\lambda \boxtimes S^\lambda.
\]
as $(\mathrm{GL}(n,\Q) \times \mf{S}_k)$-modules.
\item Suppose $n \ge k$. Then $\{S^\lambda \ | \ \lambda \vdash k\}$ gives a complete representatives of irreducible representations of $\mf{S}_k$.
\end{enumerate}
\end{thm}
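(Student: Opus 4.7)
The plan is to construct explicit maximal vectors $v_\lambda$ using Young symmetrizers, verify properties (a) and (c) directly from the construction, deduce the full decomposition (ii) from the double centralizer theorem, obtain (b) as a corollary, and finish (iii) with a dimension count.

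\smallskip

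\emph{Construction of $v_\lambda$.} For each $\lambda=(\lambda_1\ge\cdots\ge\lambda_n\ge 0)$ with $\sum\lambda_i=k$, fix the Young tableau $T_\lambda$ of shape $\lambda$ with entries $1,2,\dots,k$ placed in reading order. Let $R(T_\lambda),C(T_\lambda)\subset\mf{S}_k$ be its row and column stabilizers and set
\[
a_{T_\lambda}:=\sum_{p\in R(T_\lambda)}p,\qquad b_{T_\lambda}:=\sum_{q\in C(T_\lambda)}\sgn(q)\,q\in \Q\mf{S}_k.
\]
Writing $\rho(j)$ for the row of $j$ in $T_\lambda$, set $t_\lambda:=e_{\rho(1)}\otimes\cdots\otimes e_{\rho(k)}$ and $v_\lambda:=t_\lambda\cdot b_{T_\lambda}$.

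\smallskip

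\emph{Step 1: $v_\lambda$ is a non-zero maximal vector of weight $\lambda$.} The vector $t_\lambda$ has weight $\lambda$ since $e_i$ appears with multiplicity $\lambda_i$, and the right $\mf{S}_k$-action preserves the weight space decomposition. For $U$-invariance it suffices to show that the Lie algebra root vector $E_{ij}$ ($i<j$), acting as a derivation, kills $v_\lambda$: applying $E_{ij}$ replaces each $e_j$ by $e_i$, and since every $e_j$ sits in row $j$ (hence in a column of length at least $j$ already containing an $e_i$ in row $i$), the antisymmetrization $b_{T_\lambda}$ makes each resulting term vanish. Non-vanishing of $v_\lambda$ is combinatorial: the tensors $t_\lambda\cdot q$ for $q\in C(T_\lambda)$ are pairwise distinct so they cannot cancel.

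\smallskip

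\emph{Step 2: Parts (a) and (c).} Part (c) is immediate: since $v_\lambda$ is a maximal vector of weight $\lambda$ for the $\GL(n,\Q)$-action, Cartan--Weyl highest weight theory identifies the $\GL(n,\Q)$-submodule it generates with $L^{(\lambda)}_{\GL}$. For (a), recall that $c_\lambda:=a_{T_\lambda}b_{T_\lambda}$ is a quasi-idempotent in $\Q\mf{S}_k$ whose right ideal $c_\lambda\Q\mf{S}_k$ is the irreducible Specht module $S^\lambda$, with $S^\lambda\not\cong S^\mu$ for $\lambda\ne\mu$. Because $t_\lambda\cdot a_{T_\lambda}=|R(T_\lambda)|\,t_\lambda$, the map $\Q\mf{S}_k\to H_\Q^{\otimes k}$, $\sigma\mapsto v_\lambda\cdot\sigma$, descends to an injection of $c_\lambda\Q\mf{S}_k$ into $H_\Q^{\otimes k}$ with image $v_\lambda\cdot\Q\mf{S}_k$, which is therefore irreducible.

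\smallskip

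\emph{Step 3: Decomposition (ii) via the double centralizer theorem.} The image $A_k$ of $\Q\GL(n,\Q)$ in $\End(H_\Q^{\otimes k})$ acts semisimply by complete reducibility of rational $\GL(n,\Q)$-representations, and its commutant $B_k:=\End_{\GL(n,\Q)}(H_\Q^{\otimes k})$ contains the image of $\Q\mf{S}_k$. The reverse inclusion is the classical content of Schur--Weyl duality: identifying $B_k=(\End(H_\Q)^{\otimes k})^{\GL(n,\Q)}$ under simultaneous conjugation and using polarization in characteristic $0$ together with the Zariski density of $\GL(n,\Q)$ in $\End(H_\Q)$, one shows every such invariant is a linear combination of permutation operators. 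Hence $A_k$ and the image of $\Q\mf{S}_k$ are mutual centralizers, and the double centralizer theorem produces a bijection between the irreducible $A_k$- and $\mf{S}_k$-modules appearing and the decomposition
\[
H_\Q^{\otimes k}\cong\bigoplus_\mu V_\mu\boxtimes W_\mu.
\]
By Step 2, the indices $\mu$ with nonzero contribution are exactly the partitions $\lambda\vdash k$ with $\ell(\lambda)\le n$, and $V_\lambda=L^{(\lambda)}_{\GL}$, $W_\lambda=S^\lambda$.

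\smallskip

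\emph{Step 4: Parts (b) and (iii).} For (b): the $U$-invariant part of the weight-$\lambda$ subspace of $H_\Q^{\otimes k}$ decomposes via (ii) as $(L^{(\lambda)}_{\GL})^U_\lambda\otimes S^\lambda$, which is one-dimensional on the $\GL$ side and thus coincides with the $\mf{S}_k$-module $v_\lambda\cdot\Q\mf{S}_k=S^\lambda$. For (iii): if $n\ge k$ then every partition of $k$ satisfies $\ell(\lambda)\le n$, so every $S^\lambda$ ($\lambda\vdash k$) occurs in the decomposition; a dimension count $\sum_{\lambda\vdash k}(\dim S^\lambda)^2=k!=\dim_\Q\Q\mf{S}_k$ then forces the $S^\lambda$ to exhaust the irreducible representations of $\mf{S}_k$.

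\smallskip

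\emph{Main obstacle.} The genuine hard point is showing $\End_{\GL(n,\Q)}(H_\Q^{\otimes k})$ is spanned by the permutation operators coming from $\mf{S}_k$ — the ``surjectivity'' half of Schur--Weyl duality. This requires the polarization/first-fundamental-theorem-of-invariant-theory argument and is the only place where the characteristic-zero hypothesis is essential; everything else is bookkeeping around the explicit Young symmetrizer construction and the highest weight theory developed in the previous subsections.
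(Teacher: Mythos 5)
Your proof is correct, but there is nothing in the paper to compare it against: Theorem~\ref{thm:SWgl} sits in a subsection explicitly titled a ``review'' of classical Schur--Weyl duality, and the paper states it without proof, recording only (in Remark~\ref{rem:SW}) the explicit maximal vector $v_\lambda=(e_1\wedge\cdots\wedge e_{\lambda'_1})\otimes(e_1\wedge\cdots\wedge e_{\lambda'_2})\otimes\cdots$ and its expression as $(e_1\otimes\cdots)\cdot c_\lambda$ --- which is exactly the vector your column-antisymmetrization $t_\lambda\cdot b_{T_\lambda}$ produces. What you have written is the standard textbook argument (Young symmetrizers for the explicit highest weight vectors, the first fundamental theorem of invariant theory for the commutant, and the double centralizer theorem for the bipartite decomposition), and all the individual steps are sound: the $E_{ij}$-derivation argument for $U$-invariance, the injectivity of $x\mapsto t_\lambda\cdot x$ on $c_\lambda\Q\mf{S}_k$ via Schur's lemma, and the identification of the multiplicity space with $(H_\Q^{\otimes k})^U_\lambda$ for part (b). Two points are asserted rather than proved --- the irreducibility and pairwise non-isomorphism of the Specht modules $c_\lambda\Q\mf{S}_k$, and the spanning of $\End_{\GL(n,\Q)}(H_\Q^{\otimes k})$ by permutation operators --- but you correctly isolate the latter as the genuine content, and both are standard inputs that the paper itself takes for granted by citing the result as classical. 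One cosmetic remark: in Step 4 the completeness claim in (iii) is cleanest if you first note that the $\mf{S}_k$-action on $H_\Q^{\otimes k}$ is faithful when $n\ge k$ (the stabilizer of $e_1\otimes\cdots\otimes e_k$ is trivial), so that $\Q\mf{S}_k$ injects into the commutant and the identity $\sum_{\lambda\vdash k}(\dim S^\lambda)^2=k!$ comes out of the double centralizer theorem rather than being quoted.
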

\begin{rem}\label{rem:SW} \quad 
\begin{enumerate}[(i)] 
\item The irreducible representation $S^\lambda$ of $\mf{S}_k$ is isomorphic to the following $\mf{S}_k$-module. \\
\quad For a partition $\lambda$ of $k$, we define two special Young subgroups $C_\lambda:=\mf{S}_{\lambda_1} \times \mf{S}_{\lambda_2} \times \cdots $
and $R_\lambda:=\mf{S}_{\lambda'_1} \times \mf{S}_{\lambda'_2} \times \cdots$ of $\mf{S}_k$. Here a partition $\lambda'=(\lambda'_1,\lambda'_2, \ldots)$
is the conjugate partition of $\lambda$. In the group algebras of these two groups, we find idempotents
\[
a_\lambda=\dfrac{1}{|R_\lambda|}\sum_{\sigma \in R_\lambda}\sigma\in \Q R_\lambda, \,\,\, \mathrm{and} \,\,\,
b_\lambda=\dfrac{1}{|C_\lambda|}\sum_{\sigma \in C_\lambda}\sgn(\sigma)\sigma\in \Q C_\lambda.
\]
Then $c_\lambda=|R_\lambda||C_\lambda|a_\lambda b_\lambda$ gives an idempotent in $\Q\mf{S}_k$, called the Young symmetrizer for $\lambda$.
The right ideal $c_\lambda \cdot \Q\mf{S}_k$ in $\Q\mf{S}_k$ gives an irreducible $\mf{S}_k$-module which is isomorphic to $S^\lambda$ above. 
\item We construct $v_\lambda$ appearing in the theorem above by the following way. \\
\quad First, we define $v_1 \wedge v_2 \wedge \cdots \wedge v_r$ to be an anti-symmetrizer
\[ \sum_{\sigma \in \mf{S}_r}\sgn(\sigma) (v_{1} \otimes v_{2} \otimes \cdots \otimes v_{r}) \cdot \sigma \in H_{\Q}^{\otimes{r}}. \]
For the natural base $\{e_i\}_{i=1}^n$ of $H_{\Q}$, we define 
\begin{eqnarray}
v_{\lambda}:=(e_1 \wedge \cdots \wedge e_{\lambda'_1}) \otimes (e_1 \wedge \cdots \wedge e_{\lambda'_2}) \otimes \cdots \in H_{\Q}^{\otimes{k}}.  \label{maxgl}
\end{eqnarray}
Note that $v_\lambda$ is a maximal vector of weight $\lambda$ and
\[ v_{\lambda}=(e_1 \otimes \cdots \otimes e_{\lambda'_1} \otimes e_1 \otimes \cdots \otimes e_{\lambda'_2} \otimes \cdots )\cdot c_\lambda. \]
This $v_\lambda$ gives our desirable vector in the theorem above. 
\end{enumerate}
\end{rem}

\subsection{Brauer-Schur-Weyl's duality}

The first two subsection is based on \cite{HY} and \cite{Hu}. The last one is based on \cite{Ra}.
\subsubsection{Brauer algebras}
\quad Let us define the Brauer algebra $B_k(-2g)$ with a parameter $-2g$ and size $k$. 
\begin{dfn}
The Brauer algebra $B_k(-2g)$ over $\Q$ is a unital associative $\Q$-algebra with the following generators and defining relations:
\begin{eqnarray*}
generators&:&s_1, \ldots ,s_{k-1}, \gm_1, \ldots ,\gm_{n-1}, \\
relations&:&s_i^2=1, \quad \gm_i^2=(-2g) \gm_i, \quad \gm_is_i=\gm_i=s_i\gm_i, \quad (1 \le i \le k-1), \\
&{}& s_is_j=s_js_i, \quad s_i\gm_j=\gm_js_i, \quad \gm_i\gm_j=\gm_j\gm_i, \quad (1 \le i<j-1 \le k-2), \\
&{}& s_is_{i+1}s_i=s_{i+1}s_is_{i+1}, \quad \gm_i\gm_{i+1}\gm_i=\gm_i, \quad \gm_{i+1}\gm_i\gm_{i+1}=\gm_{i+1}, \quad (1 \le i \le k-2), \\
&{}& s_i\gm_{i+1}\gm_i=s_{i+1}\gm_i, \quad \gm_{i+1}\gm_is_{i+1}=\gm_{i+1}s_i, \quad (1 \le i \le k-2).
\end{eqnarray*}
\end{dfn}
\begin{rem}
The Brauer algebra $B_k(-2g)$ is obtained by the following diagrammatic way. \\
\quad First of all, the Brauer $k$ diagram is a diagram with specific $2k$ vertices arranged in two rows of $k$ each,
the top rows and the bottom rows, and exactly $k$ edges such that every vertex is joined to another vertex (distinct from itself) by exactly one edge.
\[
\xymatrix{
\bullet\ar@{-}[rrrd] & \bullet\ar@/_1pc/@{-}[rrr] & \bullet\ar@{-}[ld] & \bullet\ar@{-}[dr] & \bullet \\
\bullet\ar@/^1pc/@{-}[rr] & \bullet & \bullet & \bullet & \bullet 
}
\]
We define a multiplication of two diagrams as follows. We compose two diagrams $D_1$ and $D_2$ by identifying the bottom row of $D_1$ with the top row of $D_2$
such that the $i$-th vertex in the bottom row of $D_1$ is coincided with the $i$-th vertex in the top row of $D_2$.
The result is a graph, with a certain number, $n(D_1,D_2)$, of interior loops. After removing the interior loops and the identified vertices,
retaining the edges and remaining vertices, we obtain a new Brauer $k$-diagram $D_1 \circ D_2$.
Then we define a multiplication $D_1 \cdot D_2$ by $(-2g)^{n(D_1,D_2)}D_1 \circ D_2$. 
\[
\begin{minipage}[h]{90mm}
\xymatrix{
\bullet\ar@{-}[rrrd] & \bullet\ar@/_1pc/@{-}[rrr] & \bullet\ar@{-}[ld] & \bullet\ar@{-}[dr] & \bullet \\
\bullet\ar@/^1pc/@{-}[rr]\ar@/_1pc/@{-}[rr] & \bullet\ar@{-}[rrd] & \bullet & \bullet\ar@/_1pc/@{-}[r] & \bullet \\
\bullet\ar@/^1pc/@{-}[rr] & \bullet\ar@/^1pc/@{-}[rrr] & \bullet & \bullet & \bullet \\
}
\end{minipage}
=(-2g)
\begin{minipage}[h]{90mm}
\xymatrix{
\bullet\ar@/_1pc/@{-}[rrr] & \bullet\ar@/_1pc/@{-}[rrr] & \bullet\ar@{-}[rd] & \bullet & \bullet \\
\bullet\ar@/^1pc/@{-}[rr] & \bullet\ar@/^1pc/@{-}[rrr] & \bullet & \bullet & \bullet 
}
\end{minipage}
\]
The Brauer algebra $B_k(-2g)$ is defined as $\bb{Q}$-linear space with a basis being the set of the Brauer $k$-diagrams and
the multiplication of two elements given by the linear extension of a product above. \\
\quad The generators $s_i$ and $\gm_i$ correspond to the following diagrams.
\begin{eqnarray*}
\gm_i&=&\begin{minipage}[h]{90mm}
\xymatrix{
\stackrel{1}{\bullet}\ar@{-}[d] & \cdots & \stackrel{i}{\bullet}\ar@<-1.5mm>@{-}[r] & \stackrel{i+1}{\bullet} & \cdots & \stackrel{k}{\bullet}\ar@{-}[d] \\
\bullet & \cdots & \bullet\ar@{-}[r] & \bullet & \cdots & \bullet 
}
\end{minipage} \quad (1 \le i \le k-1)\\
s_i&=&\begin{minipage}[h]{90mm}
\xymatrix{
\stackrel{1}{\bullet}\ar@{-}[d] & \cdots & \stackrel{i}{\bullet}\ar@{-}[rd] & \stackrel{i+1}{\bullet}\ar@{-}[ld] & \cdots & \stackrel{k}{\bullet}\ar@{-}[d] \\
\bullet & \cdots & \bullet & \bullet & \cdots & \bullet 
}
\end{minipage}
\quad  (1 \le i \le k-1)
\end{eqnarray*}
\end{rem}

\subsubsection{Decomposition of tensor spaces (Brauer-Schur-Weyl's duality)}

\quad Let us recall the inner product on $H_\Q$ defined by (\ref{innerp}). 
Set $i':=2g-i+1$ for each integer $1 \le i \le 2g$. For the standard basis $\{e_i\}_{i=1}^{2g}$ of $H_{\Q}$, we see
\begin{eqnarray*}
\langle e_i,e_j\rangle=0=\langle e_{i'},e_{j'}\rangle,\quad 
\langle e_i,e_{j'}\rangle=\delta_{ij}=-\langle e_{j'},e_{i}\rangle, \quad (1 \le i \le g). 
\end{eqnarray*}
For each integer $1 \le i \le 2g$, we define 
\begin{eqnarray}
e_i^*=\left\{
\begin{array}{ll}
e_{i'}, & (1 \le i \le g), \\
-e_{i'}, & (g+1 \le i \le 2g).
\end{array}
\right.\label{dual}
\end{eqnarray}
Then both of $\{e_i\}_{i=1}^{2g}$ and $\{e_i^*\}_{i=1}^{2g}$ are basis for $H_{\Q}$ such that one is dual to the other in the sense that
$\langle e_i,e_j^* \rangle=\delta_{ij}$ for any $i,j$. \\
\quad The following lemma is obvious, but important to generalize the Schur-Weyl duality for $\Sp(2g,\Q)$.
\begin{lem}\label{lem:inv}
An element
\[ \omega := \sum_{i=1}^{2g}e_i \otimes e_i^* \in H_{\Q}^{\otimes{2}} \]
is invariant under the action of $\Sp(2g,\Q)$ on $H_{\Q}^{\otimes{2}}$.
\end{lem}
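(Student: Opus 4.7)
The plan is to show that $\omega$ admits a basis-independent description in terms of the symplectic form, from which $\Sp$-invariance will be essentially automatic.

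First I would establish the following reformulation. The symplectic form $\langle \cdot, \cdot \rangle$ is non-degenerate, so the map $\varphi : H_\Q \to H_\Q^*$ defined by $v \mapsto \langle \bullet, v \rangle$ (see (\ref{isomH})) is an isomorphism. Under $\varphi$, the basis $\{e_i^*\}$ maps to the dual basis $\{e_i^\vee\}$ of $\{e_i\}$ in the usual sense $e_i^\vee(e_j) = \delta_{ij}$, because $\varphi(e_i^*)(e_j) = \langle e_j, e_i^*\rangle = \delta_{ij}$ by the definition (\ref{dual}) of $e_i^*$. Therefore
\[
(\id_{H_\Q} \otimes \varphi)(\omega) \;=\; \sum_{i=1}^{2g} e_i \otimes e_i^\vee \;\in\; H_\Q \otimes H_\Q^*,
\]
and under the canonical identification $H_\Q \otimes H_\Q^* \cong \End(H_\Q)$, the right-hand side is precisely the identity endomorphism $\id_{H_\Q}$.

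Next, since by definition every $X \in \Sp(2g,\Q)$ preserves $\langle \cdot, \cdot \rangle$, the map $\varphi$ is $\Sp(2g,\Q)$-equivariant, hence so is $\id_{H_\Q} \otimes \varphi : H_\Q \otimes H_\Q \to H_\Q \otimes H_\Q^*$. On the other hand $\id_{H_\Q} \in \End(H_\Q)$ is plainly invariant under the conjugation action of any subgroup of $\GL(H_\Q)$. Transporting this invariance back across the $\Sp$-equivariant isomorphism $\id \otimes \varphi$ shows that $\omega$ is fixed by $\Sp(2g,\Q)$.

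I would mention as an alternative the direct verification: for $X \in \Sp(2g,\Q)$ one checks $\langle X e_i, X e_j^* \rangle = \langle e_i, e_j^* \rangle = \delta_{ij}$, so $\{X e_i^*\}$ is dual to $\{X e_i\}$; then, using the basis-independence statement $\sum_i v_i \otimes w_i = \omega$ for any pair of bases satisfying $\langle v_i, w_j\rangle = \delta_{ij}$ (which again follows from the $\End(H_\Q)$-interpretation above), one concludes $(X \otimes X)\omega = \omega$. I do not anticipate any real obstacle: the content is just non-degeneracy plus $\Sp$-invariance of the symplectic form, both of which are built into the definition of $\Sp(2g,\Q)$. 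The only point requiring a little care is tracking the signs in (\ref{dual}), which is exactly what makes $\{e_i^*\}$ a genuine dual basis (rather than picking up a sign in the lower half).
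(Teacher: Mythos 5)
The paper offers no proof here; it simply remarks that the lemma ``is obvious'' before stating it, so there is no written argument to compare against. Your proof is correct and is precisely the standard justification that makes the statement obvious: since $\{e_i\}$ and $\{e_i^*\}$ are dual with respect to the $\Sp$-invariant form, the map $\id\otimes\varphi$ is an $\Sp$-equivariant isomorphism sending $\omega$ to $\sum_i e_i \otimes e_i^\vee = \id_{H_\Q} \in \End(H_\Q)$, which is fixed under conjugation.
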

We define a right action of $B_k(-2g)$ on $H_{\Q}^{\otimes{k}}$ as follows.
\begin{prop}
There is a right action of $B_k(-2g)$ on $H_{\Q}^{\otimes{k}}$ which is defined on generators by
\begin{eqnarray*}
(v_{i_1} \otimes \cdots \otimes v_{i_k}) \cdot \gamma_j&:=&-v_{i_1} \otimes \cdots \otimes v_{i_{j-1}} \otimes \left(
\sum_{r=1}^{2g}e_k \otimes e_k^*
\right) \otimes v_{i_{j+2}} \otimes \cdots \otimes v_{i_k}, \\
(v_{i_1} \otimes \cdots \otimes v_{i_k}) \cdot s_j&:=&-v_{i_1} \otimes \cdots \otimes v_{i_{j-1}} \otimes v_{i_{j+1}}
   \otimes v_{i_j} \otimes v_{i_{j+2}} \otimes \cdots \otimes v_{i_k},
\end{eqnarray*}
for any $v_{i_1}, \ldots ,v_{i_k} \in H_{\Q}$.
Moreover, this action commutes with that of $\Sp(2g,\Q)$.
\end{prop}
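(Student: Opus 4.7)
The proposition asserts two things: that the given formulas on the generators $s_j$ and $\gm_j$ of $B_k(-2g)$ respect all defining relations, so that they extend to a genuine right $B_k(-2g)$-module structure on $H_\Q^{\otimes k}$, and that this action commutes with the diagonal $\Sp(2g,\Q)$-action. I would address these two parts in turn.

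For $\Sp$-equivariance, I would treat the two families of generators separately. The operator $s_j$ is, up to a global minus sign, the transposition of the $j$-th and $(j+1)$-th tensor slots, which commutes with any diagonal action of $\GL(2g,\Q)$ on $H_\Q^{\otimes k}$ and \emph{a fortiori} with the $\Sp(2g,\Q)$-action. The operator $\gm_j$ factors as a composition of two $\Sp$-equivariant maps: a contraction $H_\Q^{\otimes k}\to H_\Q^{\otimes(k-2)}$ on slots $j,j+1$ via the symplectic form $\langle\cdot,\cdot\rangle$, followed by the insertion at those positions of $\omega=\sum_i e_i\otimes e_i^*$. The contraction is $\Sp$-equivariant by the very definition of $\Sp(2g,\Q)$ as the stabilizer of $\langle\cdot,\cdot\rangle$, and the insertion of $\omega$ is $\Sp$-equivariant by Lemma \ref{lem:inv}.

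For the defining relations of $B_k(-2g)$, those in which the generators act on disjoint slots---namely $s_is_j=s_js_i$, $\gm_i\gm_j=\gm_j\gm_i$, and $s_i\gm_j=\gm_js_i$ for $|i-j|\ge 2$, together with $s_i^2=1$ (two sign flips cancel) and the braid relation $s_is_{i+1}s_i=s_{i+1}s_is_{i+1}$ (inherited from the symmetric group, with matching $(-1)^3$ on each side)---are immediate. The substantive ones involve $\gm$. For $\gm_i^2=-2g\,\gm_i$, the first application places $-\omega$ in slots $i,i+1$, the second contracts the inserted $\omega$ with itself, and a direct case check using the definition of $e_r^*$ gives $\sum_r\langle e_r,e_r^*\rangle=2g$, which together with the two minus signs yields the factor $-2g$. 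The relations $\gm_is_i=\gm_i=s_i\gm_i$ follow from $\sum_i e_i\otimes e_i^*=-\sum_i e_i^*\otimes e_i$, which is forced by skew-symmetry of $\langle\cdot,\cdot\rangle$ and cancels the sign in $s_i$. The snake relations $\gm_i\gm_{i+1}\gm_i=\gm_i$ and $\gm_{i+1}\gm_i\gm_{i+1}=\gm_{i+1}$ collapse via the duality $\langle e_a,e_b^*\rangle=\delta_{ab}$, and the mixed relations $s_i\gm_{i+1}\gm_i=s_{i+1}\gm_i$ and $\gm_{i+1}\gm_is_{i+1}=\gm_{i+1}s_i$ are verified by direct computation on a basis tensor, tracking how the transposition reorders the $\omega$-insertion. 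The main obstacle throughout is sign bookkeeping: the minuses in the definitions of $s_j\cdot$ and $\gm_j\cdot$ are arranged precisely to balance the skew-symmetry of $\langle\cdot,\cdot\rangle$, and confirming this consistency for every mixed relation is the only nontrivial part. Conceptually the statement is the classical Brauer--Schur--Weyl duality for $\Sp(2g,\Q)$, with the only extra inputs beyond routine calculation being Lemma \ref{lem:inv} and the defining invariance of $\langle\cdot,\cdot\rangle$.
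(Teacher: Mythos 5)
The paper states this proposition \emph{without proof} --- the opening line of the subsection makes clear that the Brauer--Schur--Weyl material is imported wholesale from the references \cite{HY} and \cite{Hu} --- so there is no in-paper argument to compare against. Judged on its own, your sketch is the standard verification and is sound: $s_j$ is a signed transposition and so commutes with any diagonal $\GL(2g,\Q)$-action; $\gamma_j$ factors as a symplectic contraction followed by insertion of the $\Sp$-invariant element $\omega$ (Lemma \ref{lem:inv}); and the defining relations of $B_k(-2g)$ then reduce to sign and index checks, the substantive ones being $\sum_r\langle e_r,e_r^*\rangle = 2g$ (for $\gamma_j^2=-2g\,\gamma_j$) and $\sum_r e_r^*\otimes e_r = -\omega$ (for $\gamma_i s_i = s_i\gamma_i = \gamma_i$), both of which you correctly identify and which do hold.

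One thing you should flag explicitly rather than silently correct: the formula for $(v_{i_1}\otimes\cdots\otimes v_{i_k})\cdot\gamma_j$ as printed in the paper is defective. It omits the contraction factor $\langle v_{i_j},v_{i_{j+1}}\rangle$ (and has the typo $e_k\otimes e_k^*$ in place of $e_r\otimes e_r^*$). Without that scalar the displayed map is not even linear in $\gamma_j$ in the required sense and cannot satisfy $\gamma_j^2=-2g\,\gamma_j$, so it does not define an action of $B_k(-2g)$. Your factorization of $\gamma_j$ through a contraction tacitly restores the missing factor, and the rest of your argument is consistent with the corrected formula; but since the task is to prove the stated proposition, it would be cleaner to note that you are proving the intended statement after fixing this misprint.
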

Here we state the Brauer-Schur-Weyl duality.
\begin{thm}[Brauer-Schur-Weyl's duality for $\Sp(2g,\Q)$ and $B_k(-2g)$]\label{thm:SWsp}\quad 
\begin{enumerate}[(i)]
\item Let $\lambda$ be a partition of $k-2j$ for $0 \le j \le \lfloor\frac{k}{2}\rfloor$ such that $\ell(\lambda) \le g$.
Then there exists a maximal vector $v_{\lambda} \in H_{\Q}^{\otimes{k}}$ with highest weight $\lambda$ satisfying the following three conditions:
\begin{enumerate}[(a)]
\item A $B_k(-2g)$-submodule
\[ D^{\lambda}:=\sum_{\sigma \in B_k(-2g)}\Q v_{\lambda}\cdot \sigma \]
of $H_{\Q}^{\otimes{k}}$ gives an irreducible representation of $B_k(-2g)$.
\item The subspace $(H_{\Q}^{\otimes{k}})^U_\lambda$ of $H_{\Q}^{\otimes{k}}$ coincides with $D^{\lambda}$.
Here $U$ is the fixed unipotent subgroup for $\Sp(2g,\Q)$.
\item The $\Sp(2g,\Q)$-module generated by $v_\lambda$ is isomorphic to the irreducible representation $L_{Sp}^{[\lambda]}$
of $Sp(2g,\Q)$ with highest weight $\lambda$.
\end{enumerate}
\item We have the irreducible decomposition 
\[
H_{\Q}^{\otimes{k}} \cong \bigoplus_{j=0}^{\lfloor\frac{k}{2}\rfloor}\bigoplus_{\lambda \vdash k-2j,\ell(\lambda) \le g}L_{Sp}^{[\lambda]}\boxtimes D^\lambda.
\]
as an $(\Sp(2g,\Q) \times B_k(-2g))$-module. 
\item Suppose $g \ge k$. Then $\{D^{\lambda} \ | \ \lambda \vdash k-2j \ (0 \le j \le \lfloor\frac{k}{2}\rfloor)\}$
gives a complete representatives of irreducible representations of $B_k(-2g)$.
\end{enumerate}
\end{thm}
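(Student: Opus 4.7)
The plan is to combine the Schur-Weyl duality for $\GL(2g,\Q)\times\mf{S}_k$ (Theorem~\ref{thm:SWgl}) with the $\GL\to\Sp$ branching rule (Theorem~\ref{thm:brspgl}) in order to pin down the $\Sp$-isotypic structure of $H_\Q^{\otimes k}$, and then apply the classical double commutant theorem, using the Brauer algebra as the centralizer of $\Sp(2g,\Q)$. By the proposition immediately preceding the theorem there is already an algebra homomorphism $\Phi\colon B_k(-2g)\to \End_{\Sp(2g,\Q)}(H_\Q^{\otimes k})$, so the only substantial point is to show that $\Phi$ hits every $\Sp$-equivariant endomorphism.

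First I would compute $\Sp$-multiplicities. Restricting each Schur-Weyl summand via Theorem~\ref{thm:brspgl} yields
\[
H_\Q^{\otimes k}\ \cong\ \bigoplus_{\mu\vdash k,\ \ell(\mu)\le 2g}\ \bigoplus_{\bar\lambda} N_{\mu\bar\lambda}\, L_{\Sp}^{[\bar\lambda]}\boxtimes S^{\mu}
\]
as $\Sp(2g,\Q)\times\mf{S}_k$-modules. Since $N_{\mu\bar\lambda}$ uses only partitions $\eta$ with $\eta'$ even, every $\bar\lambda$ that occurs has $|\bar\lambda|=k-2j$ for some $0\le j\le\lfloor k/2\rfloor$, and for $\lambda\vdash k-2j$ with $\ell(\lambda)\le g$ the $\Sp$-multiplicity of $L_{\Sp}^{[\lambda]}$ in $H_\Q^{\otimes k}$ equals $m_\lambda:=\sum_{\mu\vdash k}N_{\mu\lambda}\dim S^\mu$. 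In particular $\dim\End_{\Sp(2g,\Q)}(H_\Q^{\otimes k})=\sum_{j,\lambda}m_\lambda^{2}$.

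Second, I would identify $\mathrm{Im}(\Phi)$ with the full $\Sp$-centralizer. Using \eqref{isomH} to write $\End_\Q(H_\Q^{\otimes k})\cong H_\Q^{\otimes 2k}$, the task reduces to showing that $(H_\Q^{\otimes 2k})^{\Sp(2g,\Q)}$ is spanned by pair-contraction tensors built from the invariant $\omega$ of Lemma~\ref{lem:inv}. This is precisely the First Fundamental Theorem of invariant theory for $\Sp(2g,\Q)$ (Weyl); such pair-contractions are indexed by Brauer $k$-diagrams and, under the above identification, correspond exactly to the elements of $\Phi(B_k(-2g))$. Combined with the complete reducibility of rational $\Sp(2g,\Q)$-modules, the double commutant theorem then yields the bimodule decomposition
\[
H_\Q^{\otimes k}\ \cong\ \bigoplus_{j=0}^{\lfloor k/2\rfloor}\bigoplus_{\lambda\vdash k-2j,\ \ell(\lambda)\le g} L_{\Sp}^{[\lambda]}\boxtimes D^\lambda,
\]
in which each nonzero $D^\lambda$ is simple over $\Phi(B_k(-2g))$ and any two of them are non-isomorphic. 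This is exactly part (ii). Part (i) follows by taking $v_\lambda$ to be any nonzero highest weight vector of weight $\lambda$ in the $L_{\Sp}^{[\lambda]}$-isotypic component: (i)(c) is tautological, (i)(b) comes from intersecting the decomposition with $(H_\Q^{\otimes k})^U_\lambda$ (the $U$-invariant line of $L_{\Sp}^{[\lambda]}$ at weight $\lambda$, tensored with $D^\lambda$), and (i)(a) from the fact that the $\Phi(B_k(-2g))$-submodule generated by $v_\lambda$ maps isomorphically onto the irreducible factor $D^\lambda$.

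For (iii), when $g\ge k$ the condition $\ell(\lambda)\le g$ is vacuous, so every $m_\lambda$ is strictly positive and thus every $D^\lambda$ is nonzero; the standard count $\dim B_k(-2g)=(2k-1)!!=\sum_{j,\lambda}m_\lambda^{2}=\dim\End_{\Sp(2g,\Q)}(H_\Q^{\otimes k})$ forces the surjective $\Phi$ to be an isomorphism, so $\{D^\lambda\}$ exhausts the simple $B_k(-2g)$-modules. The main obstacle in this program is the surjectivity of $\Phi$: proving the First Fundamental Theorem for $\Sp(2g,\Q)$ requires genuine invariant-theoretic input (Weyl's Capelli-type polarization argument, or equivalently semisimplicity of $B_k(\delta)$ for generic $\delta$ together with a specialization argument at $\delta=-2g$), and it is this step, rather than the bookkeeping in steps one, three and four, that is the nontrivial engine driving the theorem.
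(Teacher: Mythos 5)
The paper does not actually prove this theorem: it is quoted as known, with the surrounding subsections explicitly attributed to \cite{HY}, \cite{Hu} and \cite{Ra}. So there is no internal argument to compare against; what you have written is the classical Brauer--Weyl proof, and it is essentially correct as a sketch. Your reduction of everything to the surjectivity of $\Phi\colon B_k(-2g)\to\End_{\Sp(2g,\Q)}(H_\Q^{\otimes k})$ is the right way to organize the argument, and your derivations of (i)(a)--(c) and (ii) from the double commutant theorem are sound: $(H_\Q^{\otimes k})^U_\lambda$ picks out exactly the $\mu=\lambda$ summand because $(L_{\Sp}^{[\mu]})^U$ is one-dimensional and concentrated in weight $\mu$, and the $B_k$-submodule generated by $v_\lambda$ is all of $D^\lambda$ by irreducibility. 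Two points deserve to be made explicit rather than implicit. First, as you note, the surjectivity of $\Phi$ is the First Fundamental Theorem for $\Sp(2g,\Q)$ and is genuine input; the paper sidesteps it by citation, and your proof must too (or reproduce Weyl's argument). Second, in (iii) the chain $\dim B_k(-2g)=(2k-1)!!=\sum m_\lambda^2$ needs the linear independence of the $(2k-1)!!$ diagram operators on $H_\Q^{\otimes k}$ for $g\ge k$ (a Second-Fundamental-Theorem-type statement), not just surjectivity of $\Phi$; without it you only get $\dim\End_{\Sp}\le(2k-1)!!$ and cannot conclude that $\Phi$ is injective, i.e.\ that $B_k(-2g)$ is semisimple with exactly these simples. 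Finally, note that the paper's $B_k(-2g)$-action twists the $\mf{S}_k$-action by sign (and the parameter is $-2g$); this does not affect your argument, since the image of $\Phi$ is the same subalgebra of $\End_\Q(H_\Q^{\otimes k})$, but it is why the later statements about $D^\lambda$ as an $\mf{S}_k$-module carry a $\mathrm{\mb{sgn}}$-twist.
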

In our purpose of this paper, to observe an explicit construction of $v_\lambda$ and a description of $D^\lambda$ is important. 
\begin{thm}[{\cite[Definition 3.9, Lemma 3.10, Lemma 4.8]{Hu}}]\label{thm:max}\quad 
\begin{enumerate}[(i)]
\item For a partition $\lambda$ of $k-2j$ for $0 \le j \le \lfloor\frac{k}{2}\rfloor$ such that $\ell(\lambda) \le g$,
a maximal vector $v_\lambda$ is given by
\[
v_\lambda:=\omega^{\otimes{j}} \otimes (e_1 \wedge \cdots \wedge e_{\lambda'_1} ) \otimes (e_1 \wedge \cdots \wedge e_{\lambda'_2}) \otimes \cdots. 
\]
\item We regard a subalgebra generated by $s_i \ (1 \le i \le k-1)$ in $B_k(-2g)$ as a group algebra $\Q\mf{S}_k$.
Then the right module $v_\lambda \cdot B_k(-2g)$ coincides with $v_\lambda \cdot \Q\mf{S}_k$ as a $\Q$-vector space.
\end{enumerate}
\end{thm}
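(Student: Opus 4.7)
For Part (i), the plan is a direct verification using Lemma \ref{lem:inv} together with the classical Schur-Weyl case already recalled in Theorem \ref{thm:SWgl} and Remark \ref{rem:SW}(ii). First, $\omega$ is $\Sp(2g,\Q)$-invariant, hence $\omega^{\otimes j}$ is fixed by all of $\Sp(2g,\Q)$ (in particular by the fixed unipotent $U$) and has $T_{2g}^{\Sp}$-weight $0$. Second, the factor
\[
v'_\lambda := (e_1 \wedge \cdots \wedge e_{\lambda'_1}) \otimes (e_1 \wedge \cdots \wedge e_{\lambda'_2}) \otimes \cdots \in H_{\Q}^{\otimes(k-2j)}
\]
is exactly the maximal vector (\ref{maxgl}) for $\GL(2g,\Q)$ at the partition $\lambda$. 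It is nonzero since $\ell(\lambda) \le g$ forces every $\lambda'_i \le g$ so that each wedge is nonvanishing, and it is fixed by the upper unitriangular group $U_{\GL(2g,\Q)}$. Because $U = U_{\GL(2g,\Q)} \cap \Sp(2g,\Q) \subset U_{\GL(2g,\Q)}$ with respect to compatible Borel choices, $v'_\lambda$ is in particular $U$-fixed, and its restricted weight on $T_{2g}^{\Sp}$ is $\lambda$ because $\lambda$ is supported on $\varepsilon_1, \ldots, \varepsilon_g$. Thus $v_\lambda = \omega^{\otimes j} \otimes v'_\lambda$ is a nonzero $U$-fixed vector of $T_{2g}^{\Sp}$-weight $\lambda$, and Cartan-Weyl's highest weight theory recalled in this section then identifies the $\Sp(2g,\Q)$-submodule it generates with $L^{[\lambda]}_{\Sp}$, verifying (b) and (c); the maximality assertion and (a) follow once (ii) is established.

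For Part (ii), the inclusion $v_\lambda \cdot \Q\mf{S}_k \subset v_\lambda \cdot B_k(-2g)$ is trivial. For the reverse, the plan is to show closure of $v_\lambda \cdot \Q\mf{S}_k$ under each generator $\gamma_i$: namely, for every $\sigma \in \mf{S}_k$, the vector $(v_\lambda \cdot \sigma) \cdot \gamma_i$ already lies in $v_\lambda \cdot \Q\mf{S}_k$. Since $B_k(-2g)$ is generated by the $s_j$ and the $\gamma_j$ and the $s_j$ generate $\Q\mf{S}_k$, this closure suffices. The key input is that every basis vector appearing in $v'_\lambda$ is drawn from $\{e_1, \ldots, e_g\}$ (forced by $\ell(\lambda) \le g$), and by (\ref{innerp}) one has $\langle e_i, e_j \rangle = 0$ for all $i, j \le g$. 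I would split into three cases for $\gamma_i$ acting on a permutation of $v_\lambda$: if it contracts two positions both lying in the wedge portion, the pairing vanishes and the term is zero; if it contracts two positions both inside $\omega^{\otimes j}$, the identity $\gamma_i^2 = -2g \, \gamma_i$ together with the structure of $\omega$ returns a scalar multiple of the same tensor; and if it bridges an $\omega$-leg with a wedge-leg, the defining sum $\sum_r e_r \otimes e_r^*$ collapses via $\langle e_r, e_j^* \rangle = \delta_{rj}$, leaving a tensor that, after a suitable permutation of the $k$ positions, is recognized as another element of $v_\lambda \cdot \Q\mf{S}_k$.

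The main obstacle is organizing the mixed case uniformly across all $\sigma$ and $i$. A conceptual shortcut that avoids a case-by-case enumeration is to use the standard normal form for Brauer diagrams: every element of $B_k(-2g)$ can be written in the form $\sigma_1 \cdot \gamma_{j_1} \gamma_{j_2} \cdots \gamma_{j_l} \cdot \sigma_2$ where $\sigma_1, \sigma_2 \in \mf{S}_k$ and the indices $j_1, \ldots, j_l$ correspond to disjoint cup-cap pairs. It then suffices to evaluate $v_\lambda \cdot (\gamma_{j_1} \cdots \gamma_{j_l})$ on such a normal form. By the orthogonality above, this product vanishes unless each $\gamma_{j_s}$ couples two $\omega$-legs of $v_\lambda$, and in that event the Brauer relations $\gamma_j^2 = -2g \, \gamma_j$ and $\gamma_j \gamma_{j\pm 1} \gamma_j = \gamma_j$ collapse the product to a scalar multiple of a permutation of $v_\lambda$. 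This reduction is the computational backbone of the cellular-algebra argument in Hu \cite{Hu} and provides the cleanest path to the equality $v_\lambda \cdot B_k(-2g) = v_\lambda \cdot \Q\mf{S}_k$.
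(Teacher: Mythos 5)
The paper offers no proof of this statement at all: Theorem \ref{thm:max} is quoted directly from Hu \cite{Hu}, so your blind argument is being compared against a citation rather than an internal proof. That said, your reconstruction is essentially correct and is the natural direct verification. For (i), combining Lemma \ref{lem:inv} (so $\omega^{\otimes j}$ is $U$-fixed of weight zero) with the fact that the wedge factor is the $\GL$-maximal vector (\ref{maxgl}), and noting that the paper's $U$ for $\Sp(2g,\Q)$ is literally $U_{\GL(2g,\Q)}\cap\Sp(2g,\Q)$, does produce a nonzero $U$-fixed vector of weight $\lambda$, which is exactly what ``maximal vector'' means in this paper; the deeper assertions (a)--(b) of Theorem \ref{thm:SWsp} are part of the cited duality and need not be re-proved here. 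For (ii), checking closure of $v_\lambda\cdot\Q\mf{S}_k$ under each $\gamma_i$ via your three cases --- wedge/wedge vanishes since $\langle e_a,e_b\rangle=0$ for $a,b\le g$, $\omega$/$\omega$ reduces via the Brauer relations, and the mixed $\omega$/wedge contraction collapses to a signed permutation of $v_\lambda$ --- is a sound and complete argument. One caveat: in your closing ``normal form'' paragraph, the claim that $v_\lambda\cdot(\gamma_{j_1}\cdots\gamma_{j_l})$ vanishes unless every $\gamma_{j_s}$ couples two $\omega$-legs contradicts your own mixed case, which survives and yields a signed permutation of $v_\lambda$. Since that surviving term still lies in $v_\lambda\cdot\Q\mf{S}_k$ the final conclusion is unaffected, but the dichotomy as stated is false, and the three-case analysis in the preceding paragraph is the argument to rely on.
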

\subsubsection{Character values and decompositions of $D^\lambda$ as an $\mf{S}_k$-module}
\quad We give a branching low of the irreducible $B_k(-2g)$-modules $D^\lambda$ as $\mf{S}_k$-modules.
But confusingly, the algebra $\Q\mf{S}_k$ has an involution $\iota:\sigma \mapsto \sgn(\sigma) \sigma$,
and the action of a subalgebra generated by $s_i$'s in $B_k(-2g)$ on $H_{\Q}^{\otimes k}$ is twisted by this involution.
Therefore a $\Q\mf{S}_k$-module $D$ is isomorphic to $\mathrm{\mb{sgn}} \otimes D$ as an $\iota(\Q\mf{S}_k)$-module.
Here $\mathrm{\mb{sgn}}$ is the signature representation of $\mf{S}_k$. Note that an irreducible $\mf{S}_k$-module $S^{\nu'}$
is isomorphic to $\mathrm{\mb{sgn}} \otimes S^\nu$. \\
\quad In our purpose, we consider the ordinary (untwisted) action of $\mf{S}_k$ on $H_{\Q}^{\otimes{k}}$ in the following theorem (ii).
\begin{thm}[{\cite[Theorem 5.1]{Ra}}]\quad 
\begin{enumerate}[(i)]
\item For a partition $\lambda$ of $k-2j$ for $0 \le j \le \lfloor\frac{k}{2}\rfloor$ such that $\ell(\lambda) \le g$,
let $\chi^\lambda_{B_k(-2g)}$ be the irreducible character of $D^\lambda$. Then we have
\[
\chi^{\lambda}_{B_k(-2g)}(\sigma)=\sum_{\nu \vdash k,\nu \supset \lambda'}\left(\sum_{\beta\text{:even}}\LR_{\lambda'\beta}^{\nu}\right)\chi^\nu_{\mf{S}_k}(\sigma).
\]
for any $\sigma \in \mf{S}_k \subset$ (a subalgebra generated by $\{s_i\}_{i=1}^{k-1}$).
Here $\chi^\nu_{\mf{S}_k}$ is an irreducible character of $\mf{S}_k$ associated to a partition $\nu$ of $k$.  The number $\LR$ is the Littlewood-Richardson coefficient. The even partition $\beta=(\beta_1,\beta_2, \ldots )$ is a partition such that any parts $\beta_i$ are even.
\item We have the irreducible decomposition of $D^\lambda$ is given by 
\[
\bigoplus_{\nu \vdash k,\nu \supset \lambda'}
(S^{\nu'})^{\oplus \sum_{\beta{\text{:even}}}\LR_{\lambda'\beta}^\nu}
\]
with respect to the ordinary $\mf{S}_k$-action on $H_{\Q}^{\otimes{k}}$.
\end{enumerate}
\end{thm}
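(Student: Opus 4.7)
The plan is to treat the two claims separately: part (i) is precisely the content of Ram's character formula \cite[Theorem 5.1]{Ra}, which I would invoke wholesale, while part (ii) is obtained from (i) by carefully tracking the sign twist between the Brauer-algebra action and the ordinary permutation action of $\mf{S}_k$ on $H_\Q^{\otimes k}$.

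For (i), I would quote Ram's theorem verbatim: the trace of any permutation $\sigma \in \mf{S}_k$, regarded as an element of the subalgebra $\Q\mf{S}_k \subset B_k(-2g)$ generated by the $s_i$'s, expands on $D^\lambda$ as the positive integer combination $\sum_{\nu \vdash k,\, \nu \supset \lambda'}\bigl(\sum_{\beta\ \text{even}}\LR^\nu_{\lambda'\beta}\bigr)\chi^\nu_{\mf{S}_k}(\sigma)$. Ram's proof uses the centralizer theory of Brauer algebras together with the classical plethystic identity relating symplectic characters to general linear characters via even partitions; I would not reproduce this argument, since it is intrinsic to Brauer algebras and independent of the geometric setup of the paper.

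For (ii), the decisive observation is already flagged in the remark preceding the theorem: by the definition of the right $B_k(-2g)$-action on $H_\Q^{\otimes k}$ recorded in the previous proposition, the generator $s_j$ acts as the negative of the ordinary transposition that swaps tensor slots $j$ and $j+1$. Hence, viewed as two $\mf{S}_k$-module structures on the same underlying vector space $D^\lambda$, the Brauer-algebra-induced action coincides with the ordinary action twisted by the involution $\iota : \sigma \mapsto \sgn(\sigma)\sigma$, which in representation-theoretic terms is the automorphism of $\Q\mf{S}_k$ realized by tensoring with the sign representation $\mb{sgn}$. Since $S^\nu \otimes \mb{sgn} \cong S^{\nu'}$ for every partition $\nu$ of $k$, the multiplicity-preserving dictionary between the two decompositions sends each irreducible $S^\nu$ to its conjugate $S^{\nu'}$. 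Translating the character formula of (i) into an $\mf{S}_k$-module decomposition under the ordinary action and applying this dictionary, I obtain $\bigoplus_{\nu \vdash k,\, \nu \supset \lambda'}(S^{\nu'})^{\oplus \sum_{\beta\ \text{even}}\LR^\nu_{\lambda'\beta}}$, as asserted.

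The only genuine difficulty is the bookkeeping of the sign twist; everything else is a direct transcription of Ram's character formula into the language of ordinary $\mf{S}_k$-representations. No further representation-theoretic input beyond the elementary identification $S^\nu \otimes \mb{sgn} \cong S^{\nu'}$ is needed, so the main risk lies in correctly matching the conjugation $\nu \mapsto \nu'$ on both sides of the decomposition.
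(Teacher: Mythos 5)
Your proposal matches the paper's treatment exactly: the authors do not prove (i) but cite it directly to Ram's Theorem~5.1, and they derive (ii) from (i) by the same sign-twist observation you make, which they record in the remark immediately preceding the theorem (the $B_k(-2g)$-generator $s_j$ acts as the \emph{negative} of the ordinary transposition, so passing between the two $\mf{S}_k$-actions twists by $\mathrm{\mb{sgn}}$ and hence replaces each $S^\nu$ by $S^{\nu'}$). Your bookkeeping of the conjugation $\nu\mapsto\nu'$ is correct, and no further argument is needed.
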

\begin{rem}
For a partition $\lambda \vdash k-2j$, we have the following dimension formula:
\[
\dim{D^\lambda}={}_kC_{2j}(2j-1)!! \cdot \dim{S^\lambda}.
\]
This gives the multiplicity of $L_{\Sp}^{\lambda}$ in $H_{\Q}^{\otimes{k}}$. 
For $\lambda=0$, the formula above is nothing but \cite[Lemma 4.1]{Mo}.
\end{rem}

\section{Dynkin-Specht-Weyman's idempotent and the free Lie algebras}
\quad Let us consider the right action of $\mf{S}_{k+2}$ on $H_{\Q}^{\otimes(k+2)}$. Set $\sigma_i:=s_{i-1}s_{i-2} \cdots s_1$ for each $2 \le i \le k+2$, and 
\[
\theta_{k+2}:=(1-\sigma_2) \cdots (1-\sigma_{k+2}) \in \Q\mf{S}_{k+2}.
\]
This element characterizes the degree $(k+2)$-nd part $\mathcal{L}_{2g}^{\Q}(k+2)$ of the free Lie algebra $\mathcal{L}_{2g}^{\Q}$ generated by $H_{\Q}=\Q^{2g}$
as follows. (e.g., {\cite[Theorem 2.1]{Ga}, \cite[Theorem 8.16]{Re}, \cite[Lemma 4.5]{Mo}}.)
\begin{thm}[Dynkin-Specht-Wever]\label{thm:DSW} \quad 
\begin{enumerate}[(i)]
\item $\theta_{k+2}^2=(k+2)\theta_{k+2}$. We call an element $\frac{1}{k+2}{\theta_{k+2}}$ the Dynkin-Specht-Wever idempotent.
\item For $v_1 \otimes v_2 \otimes \cdots \otimes v_{k+2} \in H_{\Q}^{\otimes{k+2}}$,
a left-normed element $[v_1, v_2, \ldots , v_{k+2}] \in \mathcal{L}_{2g}^{\Q}(k+1)$ coincides with
$(v_1 \otimes v_2 \otimes \cdots \otimes v_{k+2}) \cdot \theta_{k+2}$.
Hence the right action of $\theta_{k+2}$ on $H_{\Q}^{\otimes {k+2}}$ induces a projection $H_{\Q}^{\otimes {k+2}} \to \mathcal{L}_{2g}^{\Q}(k+1)$,
and $H_{\Q}^{\otimes{k+2}} \cdot \theta_{k+2}$ is isomorphic to $\mathcal{L}_{2g}^{\Q}(k+2)$.
\item For $v \in H_{\Q}^{\otimes(k+2)}$, the following two conditions are equivalent;
\begin{enumerate}[(a)]
\item $v \in \mathcal{L}_{2g}^{\Q}(k+2)$,
\item $v \cdot \theta_{k+2}=(k+2)v$.
\end{enumerate}
\end{enumerate}
\end{thm}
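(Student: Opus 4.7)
The plan is to prove (ii) first by direct induction, and then derive (i) and (iii) from a refined Dynkin-type identity on the image of $\theta_{k+2}$. Write $n = k+2$ throughout.

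For (ii), I proceed by induction on $n$, using the factorization $\theta_n = \theta_{n-1}\cdot(1-\sigma_n)$ built into the definition. The base case $n=2$ is just $(v_1\otimes v_2)\cdot(1-s_1) = [v_1,v_2]$. For the inductive step, $\theta_{n-1}$ only involves $s_1,\ldots,s_{n-2}$ and so leaves the last tensor factor fixed, producing by induction $[v_1,\ldots,v_{n-1}]\otimes v_n$. A direct check shows $\sigma_n = s_{n-1}\cdots s_1$ acts as the cyclic permutation $(u_1,\ldots,u_n)\mapsto(u_n,u_1,\ldots,u_{n-1})$, so for any $w\in H_\Q^{\otimes(n-1)}$ one has $(w\otimes v_n)\cdot(1-\sigma_n) = w\otimes v_n - v_n\otimes w = [w,v_n]$; applied to $w = [v_1,\ldots,v_{n-1}]$ this yields $[v_1,\ldots,v_n]$ and closes the induction. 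The second assertion of (ii) then follows because $H_\Q^{\otimes n}\cdot\theta_n$ is spanned by left-normed brackets of length $n$, which span $\mathcal{L}_{2g}^{\Q}(n)$ by the classical Jacobi and antisymmetry reductions.

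For (i), the substance is the Dynkin identity $l\cdot\theta_n = n\,l$ for $l\in\mathcal{L}_{2g}^{\Q}(n)$. I would establish this by outer induction on $n$ via the sharper formula
\begin{equation*}
(a\otimes b)\cdot\theta_n \;=\; m\,[a,b]\qquad\text{for}\ a\in\mathcal{L}_{2g}^{\Q}(m),\ b\in\mathcal{L}_{2g}^{\Q}(k),\ m+k=n.
\end{equation*}
The key auxiliary lemma, which I would prove by inner induction on $k$ (writing $b = [c,u]$ with $c\in\mathcal{L}_{2g}^{\Q}(k-1)$ and $u\in H_\Q$, and applying the Jacobi identity), is: for any $x\in T(H_\Q)$ and any $b\in\mathcal{L}_{2g}^{\Q}(k)$ with tensor expansion $b = \sum_j \beta_j\, v_{j,1}\otimes\cdots\otimes v_{j,k}$,
\begin{equation*}
\sum_j \beta_j\,[x, v_{j,1}, v_{j,2}, \ldots, v_{j,k}] \;=\; [x, b].
\end{equation*}
Combining this lemma with (ii) to evaluate the prefix bracket on the tensor expansion of $a$, together with the outer hypothesis $a\cdot\theta_m = m\,a$, yields the sharper formula. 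Swapping the roles of $a$ and $b$ gives $(b\otimes a)\cdot\theta_n = -k\,[a,b]$, so $[a,b]\cdot\theta_n = (m+k)\,[a,b] = n\,[a,b]$; since $\mathcal{L}_{2g}^{\Q}(n)$ is spanned by such commutators, the Dynkin identity follows. Part (ii) then places $w\cdot\theta_n$ in $\mathcal{L}_{2g}^{\Q}(n)$ for every $w\in H_\Q^{\otimes n}$, so $w\cdot\theta_n^2 = n\,(w\cdot\theta_n)$; choosing $g$ with $2g\ge n$ makes the right action of $\Q[\mf{S}_n]$ on $H_\Q^{\otimes n}$ faithful (by Schur--Weyl duality), which upgrades this to the group-algebra identity $\theta_n^2 = n\,\theta_n$, intrinsic to $\Q[\mf{S}_n]$ and so valid for all $g$.

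Part (iii) is then immediate from (i) and (ii): (a)$\Rightarrow$(b) is the Dynkin identity just established, while (b)$\Rightarrow$(a) follows from (ii) via $v = \tfrac{1}{n}\,v\cdot\theta_n \in H_\Q^{\otimes n}\cdot\theta_n = \mathcal{L}_{2g}^{\Q}(n)$. I expect the main obstacle to be the auxiliary iterated-bracket lemma: its inductive step requires a careful repeated use of the Jacobi identity to fuse a sum of single-element left-normed brackets into the single bracket $[x, b]$ with the full Lie element $b$, and this is really the technical heart of Dynkin's theorem. All the other steps are bookkeeping once this identity is in place.
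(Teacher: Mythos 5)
Your argument is correct. There is nothing internal to compare it with: the paper quotes this theorem as a classical fact with references to Garsia, Reutenauer and Morita and gives no proof, and what you have written is essentially the standard proof from those sources. Part (ii) by induction on the factorization $\theta_n=\theta_{n-1}\cdot(1-\sigma_n)$ (with $n=k+2$) is exactly right, and your identification of $\sigma_n$ as the cycle moving the last tensor slot to the front is the convention forced by the paper's own identity $(v_1\otimes\cdots\otimes v_{k+2})\cdot\theta_P=v_1\otimes[v_2,\ldots,v_{k+2}]$ in the proof of Corollary \ref{cor:cri}. Your auxiliary lemma is the classical Dynkin lemma and does go through as you expect: setting $R_v(x):=[x,v]=x\otimes v-v\otimes x$, the assignment $v\mapsto R_v$ extends to an algebra anti-homomorphism $\Phi$ of $T(H_\Q)$ into $\End(T(H_\Q))$, the Jacobi identity gives $[R_y,R_z]=-R_{[y,z]}$, and induction on degree then yields $\Phi(b)=R_b$ for every homogeneous Lie element $b$, which is precisely your displayed identity $\sum_j\beta_j[x,v_{j,1},\ldots,v_{j,k}]=[x,b]$. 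Combined with (ii) and the outer induction hypothesis $a\cdot\theta_m=m\,a$ this gives $(a\otimes b)\cdot\theta_n=m[a,b]$, hence $[a,b]\cdot\theta_n=n[a,b]$, and the upgrade of the operator identity $\theta_n^2=n\theta_n$ to an identity in $\Q\mf{S}_n$ via faithfulness of the place-permutation action for $2g\ge n$ is sound; (iii) then follows as you say. The only blemishes are in the paper's statement rather than in your proof: the two occurrences of $\mathcal{L}_{2g}^{\Q}(k+1)$ in item (ii) should read $\mathcal{L}_{2g}^{\Q}(k+2)$, which your normalization $n=k+2$ implicitly corrects.
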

Recall that we need to consider the $\Sp(2g,\Q)$-module
\[
\h_{g,1}^{\Q}(k)=\Ker(H_\Q \otimes_\Q \mathcal{L}_{2g}^{\Q}(k+1) \to \mathcal{L}_{2g}^{\Q}(k+2)).
\]
To characterize $\mf{h}_{g,1}^{\Q}(k)$ in $H_{\Q}^{\otimes{k+2}}$, let us consider a subgroup $P$ of $\mf{S}_{k+2}$
which fixes $1$. Namely, $P$ is isomorphic to $\mf{S}_{k+1}$. Set 
\[
\theta_P:=(1-s_2)(1-s_3s_2) \cdots (1-s_{k+1}s_k \cdots s_2).
\]
We can regard this element in $\Q{P}$ as the Dynkin-Specht-Wever idempotent for $P$. Using this element,
we obtain a characterization of $\h_{g,1}^{\Q}(k)$ as the following theorem.
\begin{prop}[{\cite[Proposition 4.6]{Mo}}]\label{prop:cri}
For $v \in H_{\Q}^{\otimes(k+2)}$, the following two conditions are equivalent;
\begin{enumerate}[(i)]
\item $v \in \mf{h}_{g,1}^{\Q}(k)$,
\item $v \cdot \theta_{P}=(k+1)v$ and $v \cdot \sigma_{k+2}=v$.
\end{enumerate}
\end{prop}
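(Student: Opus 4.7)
The plan is to split the condition $v\in \mf{h}_{g,1}^{\Q}(k)=\Ker(\mu)$, where $\mu\colon H_\Q\otimes_\Q \mathcal{L}_{2g}^{\Q}(k+1)\to \mathcal{L}_{2g}^{\Q}(k+2)$ is the bracket map, into two independent statements: (a) the ambient element $v\in H_\Q^{\otimes(k+2)}$ actually lies in $H_\Q\otimes_\Q \mathcal{L}_{2g}^{\Q}(k+1)$, and (b) its image $\mu(v)$ vanishes in $\mathcal{L}_{2g}^{\Q}(k+2)$. I will then show that (a) is equivalent to $v\cdot\theta_P=(k+1)v$ and (b) is equivalent to $v\cdot\sigma_{k+2}=v$, using Theorem \ref{thm:DSW} and the embedding $\iota\colon\mathcal{L}_{2g}^{\Q}\hookrightarrow T(H_\Q)$ from Subsection 3.2.

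For (a), I observe that $\theta_P$ is exactly the Dynkin--Specht--Wever element $\theta_{k+1}$ of $\mf{S}_{k+1}\cong P$ acting on the last $k+1$ tensor factors. Concretely, upon relabeling positions $1,\dots,k+1$ as $2,\dots,k+2$, an adjacent transposition $s_j'$ corresponds to $s_{j+1}$, and $\sigma_i'=s_{i-1}'\cdots s_1'$ corresponds to $s_i s_{i-1}\cdots s_2$; substituting into the definition of $\theta_{k+1}$ recovers $\theta_P$ verbatim. By Theorem \ref{thm:DSW}(i) and (iii), $\tfrac{1}{k+1}\theta_{k+1}$ is an idempotent in $\Q\mf{S}_{k+1}$ whose image on $H_\Q^{\otimes(k+1)}$ is $\mathcal{L}_{2g}^{\Q}(k+1)$ and which acts as the identity there. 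Tensoring in the first factor with $H_\Q$, I conclude that $v\cdot\theta_P=(k+1)v$ if and only if $v\in H_\Q\otimes_\Q \mathcal{L}_{2g}^{\Q}(k+1)$.

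For (b), assume $v=\sum_i a_i\otimes X_i$ with $a_i\in H_\Q$ and $X_i\in\mathcal{L}_{2g}^{\Q}(k+1)$. Since $\iota$ is an injective Lie algebra homomorphism and the bracket on $T(H_\Q)$ is $[u,w]=u\otimes w-w\otimes u$, I compute
\[
\iota(\mu(v))=\sum_i \iota([a_i,X_i])=\sum_i\bigl(a_i\otimes X_i - X_i\otimes a_i\bigr)=v - v\cdot\sigma_{k+2},
\]
because the cyclic permutation $\sigma_{k+2}=s_{k+1}\cdots s_2 s_1$ acts on $H_\Q^{\otimes(k+2)}$ by $v_1\otimes\cdots\otimes v_{k+2}\mapsto v_2\otimes\cdots\otimes v_{k+2}\otimes v_1$, which sends $a_i\otimes X_i$ to $X_i\otimes a_i$. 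Injectivity of $\iota$ on $\mathcal{L}_{2g}^{\Q}(k+2)$ then gives $\mu(v)=0$ if and only if $v=v\cdot\sigma_{k+2}$. Combining (a) and (b) yields the desired equivalence.

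The only delicate point is matching conventions for the right action of $\mf{S}_{k+2}$ and verifying that the cyclic shift $\sigma_{k+2}$ does carry the first tensor factor to the final slot; once these are checked, the proof is a direct unwinding of Theorem \ref{thm:DSW} combined with the Lie-algebra property of the embedding $\iota$, so no substantive obstacle is expected.
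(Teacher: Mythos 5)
The paper does not include its own proof of this proposition---it simply cites \cite[Proposition 4.6]{Mo}---so there is no in-paper argument to compare against; your approach is the standard and natural one. Your decomposition into two independent conditions (membership in $H_\Q \otimes_\Q \mathcal{L}_{2g}^\Q(k+1)$, controlled by the Dynkin--Specht--Wever idempotent $\theta_P$ on the last $k+1$ tensor factors, and vanishing of the bracket map, controlled by the cyclic symmetry under $\sigma_{k+2}$) is exactly the right way to proceed, and your verification of part (a) via the eigenvalue-$(k+1)$ eigenspace of the projection $\tfrac{1}{k+1}\theta_P$ is correct.

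There is one small error in part (b): under the paper's convention for the right $\mf{S}_{k+2}$-action (where $v\cdot(\sigma\tau)=(v\cdot\sigma)\cdot\tau$, so the leftmost generator in a written product is applied first), the element $\sigma_{k+2}=s_{k+1}\cdots s_1$ pulls the \emph{last} tensor factor to the front, i.e.\ $(v_1\otimes\cdots\otimes v_{k+2})\cdot\sigma_{k+2}=v_{k+2}\otimes v_1\otimes\cdots\otimes v_{k+1}$, not $v_2\otimes\cdots\otimes v_{k+2}\otimes v_1$ as you assert. One quick way to see this: Theorem \ref{thm:DSW}(ii) for $m=3$ requires $(v_1\otimes v_2\otimes v_3)\cdot(1-s_1)(1-s_2s_1)=[[v_1,v_2],v_3]$, which forces $(v_1\otimes v_2\otimes v_3)\cdot s_2s_1=v_3\otimes v_1\otimes v_2$. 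Consequently $\sum_i X_i\otimes a_i$ equals $v\cdot\sigma_{k+2}^{-1}$ rather than $v\cdot\sigma_{k+2}$, so the correct identity is $\iota(\mu(v))=v-v\cdot\sigma_{k+2}^{-1}$. Fortunately this slip does not affect your conclusion, because $v\cdot\sigma_{k+2}=v$ holds if and only if $v\cdot\sigma_{k+2}^{-1}=v$, so the equivalence $\mu(v)=0\Leftrightarrow v\cdot\sigma_{k+2}=v$ still follows. With this cycle direction corrected, the proof is complete.
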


\begin{cor}\label{cor:cri} 
We have 
\[
\theta_P \cdot (1+\sigma_{k+2}+\sigma_{k+2}^2+ \cdots +\sigma_{k+2}^{k+1}) \cdot \theta_P=(k+1)\theta_P \cdot (1+\sigma_{k+2}+\sigma_{k+2}^2+ \cdots +\sigma_{k+2}^{k+1})
\]
on $H_{\Q}^{\otimes{k+2}}$.
Thus we obtain
\[ v \cdot \theta_P(1+\sigma_{k+2}+\sigma_{k+2}^2+ \cdots +\sigma_{k+2}^{k+1}) \in \mf{h}_{g,1}^{\Q}(k) \]
for any $v \in H_{\Q}^{\otimes{k+2}}$.
\end{cor}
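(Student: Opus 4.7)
The plan is to prove both assertions of the Corollary at once by verifying, for an arbitrary $v \in H_\Q^{\otimes(k+2)}$, that $w := v \cdot \theta_P \cdot S$ lies in $\mf{h}_{g,1}^\Q(k)$, where $S := 1 + \sigma_{k+2} + \cdots + \sigma_{k+2}^{k+1}$. The second assertion of the Corollary is then exactly this membership statement, while the first (the operator identity $\theta_P \cdot S \cdot \theta_P = (k+1)\,\theta_P \cdot S$ on $H_\Q^{\otimes(k+2)}$) is simply the equation $w \cdot \theta_P = (k+1) w$ read universally in $v$, which is one of the two equations characterizing $\mf{h}_{g,1}^\Q(k)$ in Proposition \ref{prop:cri}.

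The cyclic-invariance $w \cdot \sigma_{k+2} = w$ is immediate. Since $\sigma_{k+2}$ has order $k+2$, we have $S \cdot \sigma_{k+2} = S$, and hence $w \cdot \sigma_{k+2} = v \cdot \theta_P \cdot S \cdot \sigma_{k+2} = w$.

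The remaining equation $w \cdot \theta_P = (k+1) w$ is equivalent, by Theorem \ref{thm:DSW} applied to the Dynkin--Specht--Wever element $\theta_P$ for $P \cong \mf{S}_{k+1}$ acting on positions $\{2, \ldots, k+2\}$, to the membership $w \in H_\Q \otimes_\Q \mathcal{L}_{2g}^\Q(k+1) \subset H_\Q^{\otimes(k+2)}$; indeed $\theta_P/(k+1)$ is the idempotent projecting onto this subspace. For a simple tensor $v = v_1 \otimes \cdots \otimes v_{k+2}$ one has $v \cdot \theta_P = v_1 \otimes [v_2, v_3, \ldots, v_{k+2}]$ (the left-normed bracket, embedded in $H_\Q^{\otimes(k+1)}$ via $\iota_{k+1}$), so $w = \sum_{j=0}^{k+1} (v \cdot \theta_P) \cdot \sigma_{k+2}^j$ is the cyclic symmetrization of this vector. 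The main step, and the main obstacle, is to show that this cyclic sum itself lies in $H_\Q \otimes \mathcal{L}_{2g}^\Q(k+1)$: although each individual rotation places the ``Lie factor'' in tensor positions other than $\{2, \ldots, k+2\}$ and so escapes this subspace, the total sum reorganizes, via repeated use of the Jacobi identity in $\mathcal{L}_{2g}^\Q$, into an expression $\sum_{i=1}^{k+2} v_i \otimes Y_i$ with each $Y_i \in \mathcal{L}_{2g}^\Q(k+1)$. The case $k = 1$ is a direct sanity check: one finds $w = v_1 \wedge v_2 \wedge v_3 = v_1 \otimes [v_2, v_3] - v_2 \otimes [v_1, v_3] + v_3 \otimes [v_1, v_2]$, which visibly lies in $H_\Q \otimes \mathcal{L}_{2g}^\Q(2)$ and is a repackaging of the Jacobi identity. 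Once this Lie-theoretic reorganization is established for general $k$, Theorem \ref{thm:DSW} yields $w \cdot \theta_P = (k+1) w$, and Proposition \ref{prop:cri} then places $w \in \mf{h}_{g,1}^\Q(k)$, so that both statements of the Corollary follow simultaneously.
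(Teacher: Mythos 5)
Your overall framing is exactly the paper's: verify the two conditions of Proposition \ref{prop:cri} for $w := v\cdot\theta_P\cdot(1+\sigma_{k+2}+\cdots+\sigma_{k+2}^{k+1})$, observe that $w\cdot\sigma_{k+2}=w$ is automatic, and reduce $w\cdot\theta_P=(k+1)w$ to the membership $w\in H_\Q\otimes_\Q\mathcal{L}_{2g}^\Q(k+1)$ via the projector property of $\theta_P/(k+1)$ from Theorem \ref{thm:DSW}. All of that is correct, and the $k=1$ sanity check is accurate.

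However, you yourself identify "the main step, and the main obstacle" --- showing that the cyclic sum $\sum_j\bigl(v_1\otimes[v_2,\ldots,v_{k+2}]\bigr)\cdot\sigma_{k+2}^j$ lands back in $H_\Q\otimes\mathcal{L}_{2g}^\Q(k+1)$ --- and then do not prove it for general $k$. Phrases like "via repeated use of the Jacobi identity" and "once this Lie-theoretic reorganization is established" are placeholders, not an argument, and one cannot conclude from the $k=1$ case. This is precisely where all the work in the paper's proof lies: it establishes the explicit identity
\[
(v_1\otimes\cdots\otimes v_{k+2})\cdot\theta_P\cdot(1+\sigma_{k+2}+\cdots+\sigma_{k+2}^{k+1})
= v_1\otimes[v_2,\ldots,v_{k+2}]-\sum_{j=2}^{k+2}v_j\otimes\bigl[[v_2,\ldots,v_{j-1}],[v_{j+1},[\ldots,[v_{k+2},v_1]\cdots]]\bigr],
\]
proved by matching, for each $j$, the terms of the expanded cyclic sum beginning with $v_j$ against the tensor-algebra expansion of a right-normed bracket. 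The right-hand side is manifestly in $H_\Q\otimes\mathcal{L}_{2g}^\Q(k+1)$, which is what makes Theorem \ref{thm:DSW} applicable. Without this (or some substitute) your proof is incomplete; the gap is the core of the statement, not a routine detail.
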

\begin{proof}
Let us recall the following expansions of a left-normed element in the free Lie algebra:
\begin{eqnarray}
[x_1, x_2, \ldots , x_m]=\sum(-1)^rx_{i_1} \otimes \cdots \otimes x_{i_r} \otimes x_1 \otimes x_{j_1} \otimes \cdots \otimes x_{j_{m-r-1}} \label{expl}
\end{eqnarray}
where the sum runs over all integers $r$ and tuples $(i_1, \ldots ,i_r)$ and $(j_1, \ldots ,j_{m-r-1})$ of integers satisfying the conditions
\[
0 \le r \le m-1,  \quad m \ge i_1>\cdots >i_r\ge 2, \quad 2 \le j_1< \cdots <j_{m-r-1} \le m.
\]
(See e.g., \cite[Lemma 1.1]{Re}.)
The expansion above is equivalent to 
\begin{eqnarray}
\sum(-1)^{r-1}x_{i_1} \otimes \cdots \otimes x_{i_r} \otimes x_2 \otimes x_{j_1} \otimes \cdots \otimes x_{j_{m-r-1}} \label{expl2}
\end{eqnarray}
where the sum runs over all integers $r$ and tuples $(i_1, \ldots ,i_r)$ and $(j_1, \ldots ,j_{m-r-1})$ of integers satisfying the conditions
\[
0 \le r \le m-1,  \quad m \ge i_1>\cdots >i_r\ge 1, \quad 1 \le j_1< \cdots <j_{m-r-1} \le m
\]
and $i_1, \ldots ,i_r,j_1, \ldots ,j_{m-r-1} \neq 2$. \\
\quad Note that $(v_1 \otimes \cdots \otimes v_{k+2})\cdot \theta_P=v_1 \otimes [v_2, \ldots ,v_{k+2}]$ for any $v_1, \ldots ,v_{k+2} \in H_{\Q}$.
To prove our statement, we shall prove
\begin{eqnarray}
&{}&(v_1 \otimes \cdots \otimes v_{k+2})\cdot \theta_P\cdot (1+\sigma+ \cdots +\sigma^{k+1}) \nonumber \\
&=& v_1 \otimes [v_2, \ldots ,v_{k+2}]-\sum_{j=2}^{k+2}v_j \otimes [[v_2,v_3, \ldots ,v_{j-1}],[v_{j+1}, [v_{j+2}, \cdots ,[v_{k+2},v_1] \cdots ]]]. \label{claim:exp}
\end{eqnarray}
In the formula above, the righthand side is contained in $H_{\Q} \otimes_{\Q} \mathcal{L}_{2g}^{\Q}(k+1)$. Therefore if (\ref{claim:exp}) is true,
by Theorem \ref{thm:DSW}, we obtain our claim. \\
\quad To prove the formula (\ref{claim:exp}), we set
\[ x_1=[v_1, \ldots ,v_{j-1}], \,\, x_2=v_j, \,\, x_3=v_{j+1}, \, \ldots , \,\, x_{k+4-s}=v_{k+2}. \]
Then applying the formula (\ref{expl2}),
we expand $(v_1 \otimes \cdots \otimes v_{k+2})\cdot \theta_P$ like as 
\[
v_1 \otimes \sum (-1)^{r-1}x_{i_1} \otimes \cdots \otimes x_{i_r} \otimes x_2 \otimes x_{j_1} \otimes \cdots \otimes x_{j_{k+3-s-r}}
\]
satisfying the similar condition for (\ref{expl2}). Hence, in $(v_1 \otimes \cdots \otimes v_{k+2})\cdot \theta_P\cdot (1+\sigma+ \cdots +\sigma^{k+1})$,
the terms which first part is equal to $v_j$ are given by 
\begin{eqnarray}
v_j \otimes \sum (-1)^{r-1}x_{j_1} \otimes \cdots \otimes x_{j_{k+3-s-r}} \otimes v_1 \otimes x_{i_1} \otimes \cdots \otimes x_{i_r} \label{expj}
\end{eqnarray}
satisfying the conditions
\[
0 \le r \le k+3-s, \quad 1 \le j_1< \cdots <j_{k+3-s-r} \le k+2, \quad k+2 \ge i_1> \cdots >i_r \ge 1
\]
and $i_1$, $\ldots$, $i_r$, $j_1$, $\ldots$, $j_{k+3-s-r} \neq 2$. \\
\quad On the other hand, note that the following expansion of a right-normed element in a free Lie algebra:
\[
[x_1,[x_2, \cdots ,[x_{m-1},x_m]\cdots]]=\sum(-1)^rx_{j_1} \otimes \cdots \otimes x_{j_{m-r-1}} \otimes x_m \otimes x_{i_1} \otimes \cdots \otimes x_{i_r},
\]
where the sum runs over all integers $r$, tuples $(i_1, \ldots ,i_r)$ and $(j_1, \ldots ,j_{m-r-1})$ of integers satisfying the conditions
\[
0 \le r \le m-1, \quad m \ge i_1>\cdots >i_r\ge 1, \quad 1 \le j_1< \cdots <j_{m-r-1} \le m.
\]
Applying this formula to (\ref{expj}), we obtain 
\[
-v_j \otimes [x_1,[x_2, \cdots ,[x_{k+4-s},v_1]]]
\]
for $x_1=[v_1, \ldots ,v_{j-1}]$, $x_2=v_j$, $x_3=v_{j+1}$, $\ldots$, $x_{k+4-s}=v_{k+2}$. Thus we have the formula (\ref{claim:exp}).
\end{proof}

\section{Multiplicities in $\Res_{\Cyc_k}^{\mf{S}_k}S^\lambda$ via Kra\'{s}kiewicz-Weyman's combinatorial description}

Let $\Cyc_k$ be a cyclic group of order $k$. Take a generator $\sigma_k$ of $\Cyc_k$ and a primitive $k$-th root $\zeta_k \in \C$ of unity.
In this section, we consider representations of the cyclic group $\Cyc_k$ over an intermediate field $\Q(\zeta_k) \subset \mb{K} \subset \C$. \\
\quad To begin with, we define one-dimensional representations (or characters) $\chi_k^j : \Cyc_k \to \mb{K}^\times$ by $\chi_k^j(\sigma_k)=\zeta_k^j$
for $0 \le j \le k-1$.
Especially, we denote the trivial representation $\chi_k^0$ by $\triv_k$.
The set of isomorphism classes of irreducible representations of $\Cyc_k$ is given by $\{\chi_k^j, \,\, 0 \le j \le k-1 \}$.
Consider $\Cyc_k$ as a subgroup of $\mf{S}_k$ by an embedding $\sigma_k^i \mapsto (1 \, 2 \, \cdots \, k)^i$ for $0 \leq i \leq k-1$.
Let us recall Kra\'{s}kiewicz-Weyman's combinatorial description for the branching rules of irreducible $\mf{S}_k$-modules $S^\lambda$
to the cyclic subgroup $\Cyc_k$. To do this, first we define a major index of a standard tableau.
\begin{dfn}\label{def:mj}
For a standard tableau $T$, we define the descent set of $T$ to be the set of entries $i$ in $T$ such that $i+1$ is located in a lower row than that
which $i$ is located. We denote by $D(T)$ the descent set of $T$.
The major index of $T$ is defined by
\[ \maj(T):=\sum_{i \in D(T)}i. \]
If $D(T)=\phi$, we set $\maj(T)=0$.
\end{dfn}
\begin{thm}[{\cite{KW}, \cite[Theorem 8.8, 8.9]{Re}, \cite[Theorem 8.4]{Ga}}]\label{thm:KW}
The multiplicity of $\chi_k^j$ in $\Res_{\Cyc_k}^{\mf{S}_k}S^\lambda$ is equal to the number of standard tableaux with shape $\lambda$
satisfying $\maj(T) \equiv j$ modulo $k$.
\end{thm}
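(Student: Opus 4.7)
The plan is to compute the multiplicity $[\chi_k^j : \Res_{\Cyc_k}^{\mf{S}_k} S^\lambda]$ by a character inner product and then match the result term-by-term with the major-index count. First, I would apply orthogonality of characters of $\Cyc_k$ to write
\begin{equation*}
[\chi_k^j : \Res_{\Cyc_k}^{\mf{S}_k} S^\lambda] = \frac{1}{k}\sum_{l=0}^{k-1} \zeta_k^{-jl}\,\chi^\lambda_{\mf{S}_k}\bigl((1\,2\,\cdots\,k)^l\bigr),
\end{equation*}
reducing the problem to evaluating the character of $S^\lambda$ on powers of the long cycle $\sigma_k = (1\,2\,\cdots\,k)$.

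The crucial step is the identity
\begin{equation*}
\chi^\lambda_{\mf{S}_k}(\sigma_k^l) = \sum_{T \in \mathrm{SYT}(\lambda)} \zeta_k^{l\,\maj(T)} \qquad (0 \le l \le k-1),
\end{equation*}
which expresses the character value as the \emph{fake degree} polynomial $f^\lambda(q):=\sum_{T \in \mathrm{SYT}(\lambda)} q^{\maj(T)}$ evaluated at a primitive $k$-th root of unity. I would derive this via Springer's theory of regular elements: the long cycle $\sigma_k$ is a regular element of order $k$ in $\mf{S}_k$ (it has an eigenvector lying off every reflecting hyperplane), so Springer's theorem yields $\chi^\lambda(\sigma_k^l) = f^\lambda(\zeta_k^l)$, where the fake degree is realized as the graded multiplicity of $S^\lambda$ in the coinvariant algebra $\Q[x_1,\ldots,x_k]/(e_1,\ldots,e_k)_+$. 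The combinatorial identification $f^\lambda(q) = \sum_T q^{\maj(T)}$ is the classical Lusztig--Stanley theorem, provable from Stanley's principal specialization formula for Schur functions together with the hook-length formula.

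Substituting back and interchanging the sums gives
\begin{equation*}
[\chi_k^j : \Res_{\Cyc_k}^{\mf{S}_k} S^\lambda] = \sum_{T \in \mathrm{SYT}(\lambda)} \frac{1}{k}\sum_{l=0}^{k-1} \zeta_k^{l(\maj(T)-j)} = \#\{T \in \mathrm{SYT}(\lambda) : \maj(T) \equiv j \pmod k\}
\end{equation*}
by the geometric-series form of orthogonality. The main obstacle is the Springer identity used in the middle step; an alternative combinatorial route, closer to the original argument of Kra\'skiewicz and Weyman, is to construct an explicit basis of the Specht module indexed by standard tableaux on which $\sigma_k$ acts by the diagonal matrix with entries $\zeta_k^{\maj(T)}$, obtaining the trace identity directly without invoking Springer's theorem.
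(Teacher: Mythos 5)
The paper states Theorem \ref{thm:KW} as a cited result (Kra\'skiewicz--Weyman, Reutenauer, Garsia) and does not include a proof, so there is no internal argument of the paper to compare yours against. That said, your proposal is correct and follows a standard modern route. After the orthogonality reduction, the heart of the matter is the identity $\chi^\lambda_{\mf{S}_k}(\sigma_k^l)=\sum_{T\in\mathrm{SYT}(\lambda)}\zeta_k^{l\,\maj(T)}$, i.e.\ the fake-degree polynomial $f^\lambda(q)=\sum_T q^{\maj(T)}$ (Lusztig--Stanley) evaluated at a $k$-th root of unity, with Springer's regular-element theorem supplying the evaluation. One point you should make explicit: the identity is applied to every power $\sigma_k^l$, not just to $\sigma_k$ itself, so you must observe that for $l\not\equiv 0 \pmod k$ each $\sigma_k^l$ inherits a regular eigenvector from $\sigma_k$ with eigenvalue $\zeta_k^l$ and is therefore a $\zeta_k^l$-regular element, to which Springer's theorem applies directly; the case $l\equiv 0$ reduces to the trivial equality $\dim S^\lambda=f^\lambda(1)$. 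The more hands-on alternative you sketch at the end — an eigenbasis of $S^\lambda$ indexed by standard tableaux on which $\sigma_k$ acts diagonally with eigenvalues $\zeta_k^{\maj(T)}$ — is essentially the original Kra\'skiewicz--Weyman construction and is closer in spirit to the combinatorial treatments in Reutenauer (Chapter 8) and Garsia cited by the paper; it has the advantage of avoiding reflection-group theory altogether, at the cost of a more elaborate explicit construction. Both routes are sound.
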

\begin{exa}\label{exa:KW} For $k \ge 2$, we have the following table on the multiplicities of $\triv_k=\chi_j^0$ and $\chi_j^1$.
\[
{\footnotesize 
\begin{array}{|c|c|c|c|c|} \hline
\lambda & T & \text{major index} & \text{mult. of $\triv_m$} & \text{mult. of $\chi_m^1$} \\
\hline
& & & & \\
(m) & \begin{minipage}[h]{30mm}{\hspace{2.5mm}%WinTpicVersion3.08
\unitlength 0.1in
\begin{picture}(  9.7500,  2.4000)(  3.8500, -6.4000)
% BOX 2 0 3 0
% 2 400 400 640 640
% 
\special{pn 8}%
\special{pa 400 400}%
\special{pa 640 400}%
\special{pa 640 640}%
\special{pa 400 640}%
\special{pa 400 400}%
\special{fp}%
% BOX 2 0 3 0
% 2 640 400 880 640
% 
\special{pn 8}%
\special{pa 640 400}%
\special{pa 880 400}%
\special{pa 880 640}%
\special{pa 640 640}%
\special{pa 640 400}%
\special{fp}%
% STR 2 0 3 0
% 3 520 460 520 520 5 0
% $1$
\put(5.2000,-5.2000){\makebox(0,0){$1$}}%
% STR 2 0 3 0
% 3 760 460 760 520 5 0
% $2$
\put(7.6000,-5.2000){\makebox(0,0){$2$}}%
% BOX 2 0 3 0
% 2 880 400 1120 640
% 
\special{pn 8}%
\special{pa 880 400}%
\special{pa 1120 400}%
\special{pa 1120 640}%
\special{pa 880 640}%
\special{pa 880 400}%
\special{fp}%
% BOX 2 0 3 0
% 2 1120 400 1360 640
% 
\special{pn 8}%
\special{pa 1120 400}%
\special{pa 1360 400}%
\special{pa 1360 640}%
\special{pa 1120 640}%
\special{pa 1120 400}%
\special{fp}%
% STR 2 0 3 0
% 3 1240 460 1240 520 5 0
% $m$
\put(12.4000,-5.2000){\makebox(0,0){$m$}}%
% STR 2 0 3 0
% 3 1000 460 1000 520 5 0
% $\cdots$
\put(10.0000,-5.2000){\makebox(0,0){$\cdots$}}%
\end{picture}%
}\end{minipage}
& 0 & 1 & 0 \\
& & & & \\
\hline
& & & & \\
(m-1,1) & \begin{minipage}[h]{30mm}{\hspace{2.5mm}%WinTpicVersion3.08
\unitlength 0.1in
\begin{picture}(  9.7500,  4.8000)(  3.8500, -8.8000)
% BOX 2 0 3 0
% 2 400 400 640 640
% 
\special{pn 8}%
\special{pa 400 400}%
\special{pa 640 400}%
\special{pa 640 640}%
\special{pa 400 640}%
\special{pa 400 400}%
\special{fp}%
% STR 2 0 3 0
% 3 520 460 520 520 5 0
% $1$
\put(5.2000,-5.2000){\makebox(0,0){$1$}}%
% BOX 2 0 3 0
% 2 880 400 1120 640
% 
\special{pn 8}%
\special{pa 880 400}%
\special{pa 1120 400}%
\special{pa 1120 640}%
\special{pa 880 640}%
\special{pa 880 400}%
\special{fp}%
% BOX 2 0 3 0
% 2 1120 400 1360 640
% 
\special{pn 8}%
\special{pa 1120 400}%
\special{pa 1360 400}%
\special{pa 1360 640}%
\special{pa 1120 640}%
\special{pa 1120 400}%
\special{fp}%
% STR 2 0 3 0
% 3 1240 460 1240 520 5 0
% $m$
\put(12.4000,-5.2000){\makebox(0,0){$m$}}%
% STR 2 0 3 0
% 3 1000 460 1000 520 5 0
% $\cdots$
\put(10.0000,-5.2000){\makebox(0,0){$\cdots$}}%
% BOX 2 0 3 0
% 2 640 400 880 640
% 
\special{pn 8}%
\special{pa 640 400}%
\special{pa 880 400}%
\special{pa 880 640}%
\special{pa 640 640}%
\special{pa 640 400}%
\special{fp}%
% STR 2 0 3 0
% 3 760 460 760 520 5 0
% $2$
\put(7.6000,-5.2000){\makebox(0,0){$2$}}%
% BOX 2 0 3 0
% 2 400 640 640 880
% 
\special{pn 8}%
\special{pa 400 640}%
\special{pa 640 640}%
\special{pa 640 880}%
\special{pa 400 880}%
\special{pa 400 640}%
\special{fp}%
% STR 2 0 3 0
% 3 520 700 520 760 5 0
% $p$
\put(5.2000,-7.6000){\makebox(0,0){$p$}}%
\end{picture}%
}\end{minipage}
& p-1 & 0 & 1 \\
& (2 \le p \le m) & & & \\
\hline
& & & & \\
(1^m) & \begin{minipage}[h]{30mm}{\hspace{6mm}%WinTpicVersion3.08
\unitlength 0.1in
\begin{picture}(  4.6600,  9.6000)(  1.1000,-13.6000)
% BOX 2 0 3 0
% 2 336 400 576 640
% 
\special{pn 8}%
\special{pa 336 400}%
\special{pa 576 400}%
\special{pa 576 640}%
\special{pa 336 640}%
\special{pa 336 400}%
\special{fp}%
% STR 2 0 3 0
% 3 456 460 456 520 5 0
% $1$
\put(4.5600,-5.2000){\makebox(0,0){$1$}}%
% BOX 2 0 3 0
% 2 336 880 576 1120
% 
\special{pn 8}%
\special{pa 336 880}%
\special{pa 576 880}%
\special{pa 576 1120}%
\special{pa 336 1120}%
\special{pa 336 880}%
\special{fp}%
% BOX 2 0 3 0
% 2 336 1120 576 1360
% 
\special{pn 8}%
\special{pa 336 1120}%
\special{pa 576 1120}%
\special{pa 576 1360}%
\special{pa 336 1360}%
\special{pa 336 1120}%
\special{fp}%
% STR 2 0 3 0
% 3 456 1180 456 1240 5 0
% $m$
\put(4.5600,-12.4000){\makebox(0,0){$m$}}%
% STR 2 0 3 0
% 3 470 910 470 970 5 0
% $\vdots$
\put(4.7000,-9.7000){\makebox(0,0){$\vdots$}}%
% BOX 2 0 3 0
% 2 336 640 576 880
% 
\special{pn 8}%
\special{pa 336 640}%
\special{pa 576 640}%
\special{pa 576 880}%
\special{pa 336 880}%
\special{pa 336 640}%
\special{fp}%
% STR 2 0 3 0
% 3 456 700 456 760 5 0
% $2$
\put(4.5600,-7.6000){\makebox(0,0){$2$}}%
\end{picture}%
}\end{minipage}
&
\begin{array}{l}
 \dfrac{m(m-1)}{2} \\
\equiv \left\{
\begin{array}{ll}
0, & \text{$m$ : odd} \\
-\frac{m}{2}, & \text{$m$ : even} 
\end{array}
\right.
\end{array}

& \begin{cases} 1, & \text{$m$ : odd} \\ 0, & \text{$m$ : even}  \end{cases} & \begin{cases} 1, & m=2 \\ 0, & m \neq 2 \end{cases} \\
& & & & \\
\hline
& & & & \\
(2,1^{m-2}) & \begin{minipage}[h]{30mm}{\hspace{4mm}%WinTpicVersion3.08
\unitlength 0.1in
\begin{picture}(  7.1600,  9.6000)(  1.0000,-13.6000)
% BOX 2 0 3 0
% 2 336 400 576 640
% 
\special{pn 8}%
\special{pa 336 400}%
\special{pa 576 400}%
\special{pa 576 640}%
\special{pa 336 640}%
\special{pa 336 400}%
\special{fp}%
% STR 2 0 3 0
% 3 456 460 456 520 5 0
% $1$
\put(4.5600,-5.2000){\makebox(0,0){$1$}}%
% BOX 2 0 3 0
% 2 336 880 576 1120
% 
\special{pn 8}%
\special{pa 336 880}%
\special{pa 576 880}%
\special{pa 576 1120}%
\special{pa 336 1120}%
\special{pa 336 880}%
\special{fp}%
% BOX 2 0 3 0
% 2 336 1120 576 1360
% 
\special{pn 8}%
\special{pa 336 1120}%
\special{pa 576 1120}%
\special{pa 576 1360}%
\special{pa 336 1360}%
\special{pa 336 1120}%
\special{fp}%
% STR 2 0 3 0
% 3 456 1180 456 1240 5 0
% $m$
\put(4.5600,-12.4000){\makebox(0,0){$m$}}%
% STR 2 0 3 0
% 3 460 920 460 980 5 0
% $\vdots$
\put(4.6000,-9.8000){\makebox(0,0){$\vdots$}}%
% BOX 2 0 3 0
% 2 336 640 576 880
% 
\special{pn 8}%
\special{pa 336 640}%
\special{pa 576 640}%
\special{pa 576 880}%
\special{pa 336 880}%
\special{pa 336 640}%
\special{fp}%
% STR 2 0 3 0
% 3 456 700 456 760 5 0
% $2$
\put(4.5600,-7.6000){\makebox(0,0){$2$}}%
% BOX 2 0 3 0
% 2 576 400 816 640
% 
\special{pn 8}%
\special{pa 576 400}%
\special{pa 816 400}%
\special{pa 816 640}%
\special{pa 576 640}%
\special{pa 576 400}%
\special{fp}%
% STR 2 0 3 0
% 3 696 460 696 520 5 0
% $p$
\put(6.9600,-5.2000){\makebox(0,0){$p$}}%
\end{picture}%
}\end{minipage}
& 
\begin{array}{l}
\dfrac{m(m-1)}{2}-(p-1) \\
\equiv \left\{
\begin{array}{ll}
1-p, & \text{$m$ : odd} \\
1-p-\frac{m}{2}, & \text{$m$ : even} 
\end{array}
\right.
\end{array}
& \begin{cases} 1, & \text{$m$ : even} \\ 0, & \text{$m$ : odd} \end{cases} & \begin{cases} 1, & m \neq 2 \\ 0, & m = 2 \end{cases} \\
&  (2 \le p \le m)& & & \\ \hline
\end{array}
}
\]
\end{exa}

\begin{exa}\label{exa:KW2}
For $m \geq 3$ and a partition $\lambda=(m-2,1^2)$, we have 
\begin{enumerate}[(i)]
\item $[\triv_m:\Res_{\Cyc_m}^{\mf{S}_m}S^\lambda]= \begin{cases} (m-2)/2 \hspace{1em} & \mathrm{if} \,\,\, m : \mathrm{even}, \\
                                                                  (m-1)/2 & \mathrm{if} \,\,\, m : \mathrm{odd}. \end{cases}$
\item $[\chi_m^1:\Res_{\Cyc_m}^{\mf{S}_m}S^\lambda]= \begin{cases} (m-3)/2 \hspace{1em} & \mathrm{if} \,\,\, m : \mathrm{odd}, \\
                                                                   (m-2)/2 & \mathrm{if} \,\,\, m : \mathrm{even}. \end{cases}$
\end{enumerate}
In fact, for a partition
\[ T=\begin{minipage}[h]{30mm}%WinTpicVersion3.08
\unitlength 0.1in
\begin{picture}(  9.7500,  7.2000)(  3.8500,-11.2000)
% BOX 2 0 3 0
% 2 400 400 640 640
% 
\special{pn 8}%
\special{pa 400 400}%
\special{pa 640 400}%
\special{pa 640 640}%
\special{pa 400 640}%
\special{pa 400 400}%
\special{fp}%
% STR 2 0 3 0
% 3 520 460 520 520 5 0
% $1$
\put(5.2000,-5.2000){\makebox(0,0){$1$}}%
% BOX 2 0 3 0
% 2 880 400 1120 640
% 
\special{pn 8}%
\special{pa 880 400}%
\special{pa 1120 400}%
\special{pa 1120 640}%
\special{pa 880 640}%
\special{pa 880 400}%
\special{fp}%
% BOX 2 0 3 0
% 2 1120 400 1360 640
% 
\special{pn 8}%
\special{pa 1120 400}%
\special{pa 1360 400}%
\special{pa 1360 640}%
\special{pa 1120 640}%
\special{pa 1120 400}%
\special{fp}%
% STR 2 0 3 0
% 3 1240 460 1240 520 5 0
% $m$
\put(12.4000,-5.2000){\makebox(0,0){$m$}}%
% STR 2 0 3 0
% 3 1000 460 1000 520 5 0
% $\cdots$
\put(10.0000,-5.2000){\makebox(0,0){$\cdots$}}%
% BOX 2 0 3 0
% 2 640 400 880 640
% 
\special{pn 8}%
\special{pa 640 400}%
\special{pa 880 400}%
\special{pa 880 640}%
\special{pa 640 640}%
\special{pa 640 400}%
\special{fp}%
% STR 2 0 3 0
% 3 760 460 760 520 5 0
% $2$
\put(7.6000,-5.2000){\makebox(0,0){$2$}}%
% BOX 2 0 3 0
% 2 400 640 640 880
% 
\special{pn 8}%
\special{pa 400 640}%
\special{pa 640 640}%
\special{pa 640 880}%
\special{pa 400 880}%
\special{pa 400 640}%
\special{fp}%
% STR 2 0 3 0
% 3 520 700 520 760 5 0
% $p$
\put(5.2000,-7.6000){\makebox(0,0){$p$}}%
% BOX 2 0 3 0
% 2 400 880 640 1120
% 
\special{pn 8}%
\special{pa 400 880}%
\special{pa 640 880}%
\special{pa 640 1120}%
\special{pa 400 1120}%
\special{pa 400 880}%
\special{fp}%
% STR 2 0 3 0
% 3 520 940 520 1000 5 0
% $q$
\put(5.2000,-10.0000){\makebox(0,0){$q$}}%
\end{picture}%
\end{minipage}, \]
its major index is given by $\maj(T)=p+q-2$ for $2 \le p<q \le m$. Then $\maj(T) \equiv 0 \pmod{m}$ if and only if $p+q=m+2$.
Hence we have the number of standard tableaux of shape $\lambda$ is equal to $\dfrac{m}{2}-1$ for odd $m$ and $\dfrac{m-1}{2}$ for even $m$.
On the other hand, $\maj(T) \equiv 1 \pmod{m}$ if and only if $p+q=m+3$.
Hence the number of standard tableaux of shape $\lambda$ is equal to $\dfrac{m-3}{2}$ for odd $m$ and $\dfrac{m-2}{2}$ for even $m$. 
\end{exa}
\begin{exa}\label{exa:KW3}
For $m \geq 4$ and a partition $\lambda=(2^2,1^{m-4})$, we have
\[
 \ [\chi_m^1:\Res_{\Cyc_m}^{\mf{S}_m}S^\lambda]=\left\{
\begin{array}{ll}
\frac{m-3}{2} & \text{if $m$ is odd}, \\
\frac{m-4}{2} & \text{if $m \equiv 0 \ (\mathrm{mod} \, 4)$}, \\
\frac{m-2}{2} & \text{if $m \equiv 2 \ (\mathrm{mod} \, 4)$}.
\end{array}
\right.
\] 
To prove this, we consider the following two kind of standard tableaux of shape $\lambda$:
\[
T_{p,q}=\begin{minipage}[h]{20mm}%WinTpicVersion3.08
\unitlength 0.1in
\begin{picture}(  7.0600,  9.6000)(  1.1000,-13.6000)
% BOX 2 0 3 0
% 2 336 400 576 640
% 
\special{pn 8}%
\special{pa 336 400}%
\special{pa 576 400}%
\special{pa 576 640}%
\special{pa 336 640}%
\special{pa 336 400}%
\special{fp}%
% STR 2 0 3 0
% 3 456 460 456 520 5 0
% $1$
\put(4.5600,-5.2000){\makebox(0,0){$1$}}%
% BOX 2 0 3 0
% 2 336 880 576 1120
% 
\special{pn 8}%
\special{pa 336 880}%
\special{pa 576 880}%
\special{pa 576 1120}%
\special{pa 336 1120}%
\special{pa 336 880}%
\special{fp}%
% BOX 2 0 3 0
% 2 336 1120 576 1360
% 
\special{pn 8}%
\special{pa 336 1120}%
\special{pa 576 1120}%
\special{pa 576 1360}%
\special{pa 336 1360}%
\special{pa 336 1120}%
\special{fp}%
% STR 2 0 3 0
% 3 456 1180 456 1240 5 0
% $m$
\put(4.5600,-12.4000){\makebox(0,0){$m$}}%
% BOX 2 0 3 0
% 2 336 640 576 880
% 
\special{pn 8}%
\special{pa 336 640}%
\special{pa 576 640}%
\special{pa 576 880}%
\special{pa 336 880}%
\special{pa 336 640}%
\special{fp}%
% STR 2 0 3 0
% 3 456 700 456 760 5 0
% $2$
\put(4.5600,-7.6000){\makebox(0,0){$2$}}%
% BOX 2 0 3 0
% 2 576 400 816 640
% 
\special{pn 8}%
\special{pa 576 400}%
\special{pa 816 400}%
\special{pa 816 640}%
\special{pa 576 640}%
\special{pa 576 400}%
\special{fp}%
% STR 2 0 3 0
% 3 696 460 696 520 5 0
% $p$
\put(6.9600,-5.2000){\makebox(0,0){$p$}}%
% BOX 2 0 3 0
% 2 576 640 816 880
% 
\special{pn 8}%
\special{pa 576 640}%
\special{pa 816 640}%
\special{pa 816 880}%
\special{pa 576 880}%
\special{pa 576 640}%
\special{fp}%
% STR 2 0 3 0
% 3 696 700 696 760 5 0
% $q$
\put(6.9600,-7.6000){\makebox(0,0){$q$}}%
% STR 2 0 3 0
% 3 470 920 470 980 5 0
% $\vdots$
\put(4.7000,-9.8000){\makebox(0,0){$\vdots$}}%
\end{picture}%
\end{minipage}
 \ (2 \le p<p+1<q \le m), \quad 
T_p=\begin{minipage}[h]{20mm}%WinTpicVersion3.08
\unitlength 0.1in
\begin{picture}(  7.0600,  9.6000)(  1.1000,-13.6000)
% BOX 2 0 3 0
% 2 336 400 576 640
% 
\special{pn 8}%
\special{pa 336 400}%
\special{pa 576 400}%
\special{pa 576 640}%
\special{pa 336 640}%
\special{pa 336 400}%
\special{fp}%
% STR 2 0 3 0
% 3 456 460 456 520 5 0
% $1$
\put(4.5600,-5.2000){\makebox(0,0){$1$}}%
% BOX 2 0 3 0
% 2 336 880 576 1120
% 
\special{pn 8}%
\special{pa 336 880}%
\special{pa 576 880}%
\special{pa 576 1120}%
\special{pa 336 1120}%
\special{pa 336 880}%
\special{fp}%
% BOX 2 0 3 0
% 2 336 1120 576 1360
% 
\special{pn 8}%
\special{pa 336 1120}%
\special{pa 576 1120}%
\special{pa 576 1360}%
\special{pa 336 1360}%
\special{pa 336 1120}%
\special{fp}%
% STR 2 0 3 0
% 3 456 1180 456 1240 5 0
% $m$
\put(4.5600,-12.4000){\makebox(0,0){$m$}}%
% BOX 2 0 3 0
% 2 336 640 576 880
% 
\special{pn 8}%
\special{pa 336 640}%
\special{pa 576 640}%
\special{pa 576 880}%
\special{pa 336 880}%
\special{pa 336 640}%
\special{fp}%
% STR 2 0 3 0
% 3 456 700 456 760 5 0
% $2$
\put(4.5600,-7.6000){\makebox(0,0){$2$}}%
% BOX 2 0 3 0
% 2 576 400 816 640
% 
\special{pn 8}%
\special{pa 576 400}%
\special{pa 816 400}%
\special{pa 816 640}%
\special{pa 576 640}%
\special{pa 576 400}%
\special{fp}%
% STR 2 0 3 0
% 3 696 460 696 520 5 0
% $p$
\put(6.9600,-5.2000){\makebox(0,0){$p$}}%
% BOX 2 0 3 0
% 2 576 640 816 880
% 
\special{pn 8}%
\special{pa 576 640}%
\special{pa 816 640}%
\special{pa 816 880}%
\special{pa 576 880}%
\special{pa 576 640}%
\special{fp}%
% STR 2 0 3 0
% 3 696 700 696 760 5 0
% {\tiny$p+1$}
\put(6.9600,-7.6000){\makebox(0,0){{\tiny$p+1$}}}%
% STR 2 0 3 0
% 3 470 920 470 980 5 0
% $\vdots$
\put(4.7000,-9.8000){\makebox(0,0){$\vdots$}}%
\end{picture}%
\end{minipage}
(3 \le p \le m-1).
\]
Their major indices are given by
\[ \maj(T_{p,q})=\frac{m(m-1)}{2}+2-p-q \hspace{1em} \mathrm{and} \hspace{1em} \maj(T_p)=\frac{m(m-1)}{2}+1-p. \]
If $m$ is odd, $\frac{m(m-1)}{2} \equiv 0 \pmod{m}$.
Thus $\maj(T_{p,q}) \equiv 1 \pmod{m}$ if and only if $p+q=m+1$. The number of such $(p,q)$s is $\frac{m-3}{2}$.
There is no $T_p$ such that $\maj(T_p) \equiv 1 \pmod{m}$.
If $m$ is even, $\frac{m(m-1)}{2} \equiv \frac{m}{2} \pmod{m}$. Since $m \neq 2$, $\maj(T_p) \equiv 1 \pmod{m}$ if and only if $p=\frac{m}{2}$ for $m>4$.
If $m=4$, there is no such $T_p$. \\
\quad On the other hand, $\maj(T_{p,q}) \equiv 1 \pmod{m}$ if and only if $p+q=m+1+\frac{m}{2}$ for $m=4,6,8$ and $p+q=m+1+\frac{m}{2}$,
or $1+\frac{m}{2}$ for $m \ge 10$.
If $m=4$, $6$ or $8$, the number of such $(p,q)$s is $0$, $1$ or $1$ respectively. Suppose $m \ge 10$.
If $m=4M$, $\maj(T_{p,q}) \equiv 1 \pmod{m}$ if and only if $p+q=6M+1$ or $2M+1$. The number of such $(p,q)$s is $(M-1)+(M-2)=2M-3=\frac{m}{2}-3$.
If $m=4M+2$, $\maj(T_{p,q}) \equiv 1 \pmod{m}$ if and only if $p+q=6M+4$ or $2M+2$. The number of such $(p,q)$s is $M+(M-1)=2M-1=\frac{m}{2}-2$.
Therefore we obtain the claim.
\end{exa}

\section{$\Sp$-irreducible components of the Johnson cokernels}
\subsection{Our strategy for detecting $\Sp$-irreducible components}\label{sec:str}

In the rest of this paper, we assume $g \ge k+2$.
To explain our strategy for detecting $\Sp$-irreducible components in the Johnson cokernel of the mapping class group,
let us recall the following diagram as mentioned above:
\[
\xymatrix{
    & \Im\tau_{k,\Q}' \ar@{^{(}->}[rr]   &  
    &   H_{\Q}^* \otimes_{\Q} \mathcal{L}_{2g}^{\Q}(k+1) \ar@{->>}[r] & H_{\Q}^{\otimes{k}} \ar@{->>}[r] & \mathcal{C}_{2g}^{\Q}(k) \\
  \Im \tau_{k,\Q}^{\M} \ar@{=}[r] & \Im \mtau \ar@{^{(}->}[u] \ar@{^{(}->}[r] 
    & \h_{g,1}^{\Q}(k) \ar@{^{(}->}[r] & H_{\Q} \otimes_\Q \mathcal{L}_{2g}^{\Q}(k+1) \ar@{->>}[rr]\ar[u]_{\wr} & & \mathcal{L}_{2g}^{\Q}(k+1)
}
\]
Here we may regard it as a diagram of $\Sp(2g,\Q)$-modules and $\Sp(2g,\Q)$-equivariant homomorphisms.
By Theorem {\rmfamily \ref{T-S11}}, we see $\Coker(\Im{\tau'_{k,\Q}} \hookrightarrow H_{\Q}^* \otimes_\Q \mathcal{L}_{2g}^{\Q}(k+1))$
coincides with $\mathcal{C}_{2g}^{\Q}(k)$ for $2g \geq k+2$.
Observing a natural isomorphism $H^* \otimes_\Q \mathcal{L}_{2g}^{\Q}(k+1) \cong H \otimes_\Q \mathcal{L}_{2g}^{\Q}(k+1)$ induced from the Poincar\'{e} duality,
we obtain $\Sp(2g,\Q)$-equivariant homomorphism $c_k: \mf{h}_{g,1}^{\Q}(k) \to \mathcal{C}_{2g}^{\Q}(k)$.
Note that $\Im \mtau \subset \Im\tau_{k,\Q}'$. Then we have the following criterion for detecting $\Sp$-irreducible components
in the Johnson cokernel $\Coker(\Im \mtau \to \h_{g,1}^{\Q}(k))$.
\begin{prop}\label{prop:coker}
Let $V$ be an irreducible $\Sp(2g,\Q)$-submodule of $\h_{g,1}^{\Q}(k)$. If $c_k(V)$ is a non-trivial (then automatically irreducible) component of
$\mathcal{C}_{2g}^{\Q}(k)$, then $V$ is an irreducible $\Sp(2g,\Q)$-module in $\mathrm{Coker}(\Im \mtau)$.
In particular, if there is a maximal vector $v$ of weight $\lambda$ in $\h_{g,1}^{\Q}(k)$ such that $c_k(v) \neq 0$
(then $c_k(v)$ is a maximal in $\mathcal{C}_{2g}^{\Q}(k)$),
then $v$ gives an $\Sp(2g,\Q)$-irreducible component in $\mathrm{Coker}(\Im \mtau)$
which is isomorphic to the irreducible $\Sp(2g,\Q)$-module $L_{\Sp}^{[\lambda]}$.
\end{prop}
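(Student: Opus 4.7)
The plan is to descend $c_k$ to a well-defined $\Sp$-equivariant map on the Johnson cokernel, and then use irreducibility of $V$. Writing $c_k$ explicitly as the composite
\[
\h_{g,1}^{\Q}(k) \hookrightarrow H_\Q \otimes_\Q \mathcal{L}_{2g}^\Q(k+1) \xrightarrow{\sim} H_\Q^* \otimes_\Q \mathcal{L}_{2g}^\Q(k+1) \twoheadrightarrow \mathcal{C}_{2g}^\Q(k),
\]
where the final surjection is the one furnished by Theorem \ref{T-S11} with kernel $\Im(\tau_{k,\Q}')$, I would note that
\[
\ker(c_k) \supseteq \h_{g,1}^\Q(k) \cap \Im(\tau_{k,\Q}') \supseteq \Im(\mtau) = \Im(\tau_{k,\Q}^\M),
\]
the last equality being exactly Hain's result (Proposition \ref{prop:Im}). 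Thus $c_k$ factors through the quotient map, giving an $\Sp(2g,\Q)$-equivariant homomorphism $\bar c_k : \Coker(\tau_{k,\Q}^\M) \to \mathcal{C}_{2g}^\Q(k)$.

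Next, for an irreducible $\Sp$-submodule $V \subseteq \h_{g,1}^\Q(k)$ with $c_k(V) \neq 0$, I would consider $V \cap \Im(\tau_{k,\Q}^\M)$, which is an $\Sp$-subrepresentation of $V$. By irreducibility it equals either $V$ or $0$; the former is excluded because $V \subseteq \Im(\tau_{k,\Q}^\M) \subseteq \ker(c_k)$ would contradict $c_k(V) \neq 0$. Hence $V \cap \Im(\tau_{k,\Q}^\M) = 0$, and the canonical projection restricts to an injection $V \hookrightarrow \Coker(\tau_{k,\Q}^\M)$, exhibiting $V$ as an irreducible $\Sp$-component of the cokernel.

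For the \emph{in particular} clause, let $v \in \h_{g,1}^\Q(k)$ be a maximal vector of weight $\lambda$ with $c_k(v) \neq 0$. The $\Sp$-submodule $V := \Sp(2g,\Q) \cdot v$ is then irreducible with highest weight $\lambda$, i.e.\ $V \cong L_{\Sp}^{[\lambda]}$. Since $c_k$ is equivariant with respect to both the unipotent $U$ and the torus, $c_k(v)$ is $U$-invariant of weight $\lambda$; the hypothesis $c_k(v) \neq 0$ then upgrades it to a maximal vector in $\mathcal{C}_{2g}^\Q(k)$, so in particular $c_k(V) \neq 0$. Applying the previous paragraph yields the desired irreducible component $L_{\Sp}^{[\lambda]}$ inside $\Coker(\tau_{k,\Q}^\M)$.

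There is no substantive analytic obstacle; the argument is essentially a clean diagram chase. The two genuine inputs are the identification $\Coker(\tau_{k,\Q}') \cong \mathcal{C}_{2g}^\Q(k)$ from Theorem \ref{T-S11} (which is what makes $c_k$ available in the first place) and Hain's equality $\Im(\tau_{k,\Q}^\M) = \Im(\mtau)$ from Proposition \ref{prop:Im} (which is what lets one compare $\Sp$-images coming from the Torelli group and from its lower central series). The only bookkeeping subtlety is the Poincar\'e-duality identification $H_\Q \cong H_\Q^*$ used in the middle column of the commutative diagram preceding the proposition; this is what allows the $\Aut\,F_{2g}$-cokernel computation to be transported into the $\M_{g,1}$ setting in an $\Sp$-equivariant fashion.
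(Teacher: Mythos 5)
Your proof is correct and follows exactly the route the paper intends: the paper does not spell out a proof but presents the proposition as an immediate consequence of the commutative diagram, the identification $\Coker(\tau'_{k,\Q}) \cong \mathcal{C}_{2g}^{\Q}(k)$, the inclusion $\Im(\mtau) \subset \Im(\tau'_{k,\Q})$, and Hain's equality $\Im(\tau_{k,\Q}^{\M}) = \Im(\mtau)$ — precisely the ingredients you assemble. Your write-up merely makes explicit the diagram chase (factoring $c_k$ through the cokernel, then invoking irreducibility of $V$) that the authors leave to the reader.
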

To find such a maximal vector, we use Theorem \ref{thm:max} and Corollary \ref{cor:cri}. Namely, for a maximal vector $v_\lambda$ as in Theorem \ref{thm:max},
we consider $\phi_\lambda:=v_\lambda \cdot \theta_P \cdot (1+\sigma_{k+2}+ \cdots +\sigma_{k+1}^{k+2})$.
If $\phi_\lambda \neq 0$, this is a maximal vector of weight $\lambda$ such that $\phi_\lambda \in \mf{h}_{g,1}^{\Q}(k)$ by Corollary \ref{cor:cri}.
Then we investigate whether $c_k(\phi_\lambda) \in \mathcal{C}_{2g}^{\Q}(k)$ is $0$ or not.

\subsection{Some multiplicity formulae}

In this subsection, we give some explicit multiplicity formulae for $[k]$ and $[1^k]$ in $\mf{h}_{g,1}^{\Q}(k)$ and $\mathcal{C}_{2g}^{\Q}(k)$. 
First, let us recall the multiplicity formulae in our previous paper \cite{ES}. 
\begin{prop}\label{prop:ES}\quad Suppose $n \ge k+2$.
\begin{enumerate}[(i)]
\item For a partition $\lambda$ of $k$, 
\[ [L^{\lambda}_{\GL}: \mathcal{C}_n^{\Q}(k)] = [\triv_k:\Res_{\Cyc_k}^{\mf{S}_k}S^\lambda]. \]
\item For a partition $\lambda$ of $k+2$, 
\[ [L^{\lambda}_{\GL}: \mathcal{L}_n^{\Q}(k+2)] = [\chi_k^1:\Res_{\Cyc_k}^{\mf{S}_k}S^\lambda]. \]
\item For a partition $\lambda$ of $k+2$,
\[ [L_{\GL}^\lambda:H_{\Q} \otimes_\Q \mathcal{L}_n^{\Q}(k+1)] = \sum_{\mu}[L_{\GL}^{\lambda}:\mathcal{L}_n^{\Q}(k+1)] \]
where $\mu$ runs over all partitions obtained by removing a single node.
\end{enumerate}
\end{prop}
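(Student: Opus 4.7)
The plan is to treat all three parts by a common template: realize each left-hand side as the image of an explicit element of $\Q\mf{S}_m$ acting on $H_\Q^{\otimes m}$ (with $m=k$ or $k+2$), apply the Schur-Weyl duality of Theorem \ref{thm:SWgl} to split the calculation into $(\GL,\mf{S}_m)$-isotypic pieces, and extract the $\GL$-multiplicity via Frobenius reciprocity. The hypothesis $n \ge k+2$ keeps us in the stable range where every partition contributes.

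For (i), the defining relations of $\mathcal{C}_n^\Q(k)$ identify it with the space of $\Cyc_k$-coinvariants of $H_\Q^{\otimes k}$ under cyclic permutation of tensor factors; over a field of characteristic zero coinvariants agree with invariants. Theorem \ref{thm:SWgl}(ii) then yields
\[
\mathcal{C}_n^\Q(k) \;\cong\; (H_\Q^{\otimes k})^{\Cyc_k} \;\cong\; \bigoplus_{\lambda \vdash k,\, \ell(\lambda)\le n} L^\lambda_{\GL} \boxtimes (S^\lambda)^{\Cyc_k}.
\]
Reading off the $\GL$-multiplicity and applying Frobenius reciprocity to $\dim(S^\lambda)^{\Cyc_k} = [\triv_k : \Res_{\Cyc_k}^{\mf{S}_k} S^\lambda]$ gives the claim.

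For (ii), Theorem \ref{thm:DSW} shows that $e:=\frac{1}{k+2}\theta_{k+2}$ is an idempotent in $\Q\mf{S}_{k+2}$ with $\mathcal{L}_n^\Q(k+2) \cong H_\Q^{\otimes(k+2)}\cdot e$. Applying Theorem \ref{thm:SWgl}(ii) to this gives
\[
\mathcal{L}_n^\Q(k+2) \;\cong\; \bigoplus_{\lambda \vdash k+2,\, \ell(\lambda)\le n} L^\lambda_{\GL} \boxtimes (S^\lambda \cdot e),
\]
so $[L^\lambda_{\GL}:\mathcal{L}_n^\Q(k+2)] = \dim(S^\lambda\cdot e)$. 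The essential extra input is Klyachko's theorem, which identifies the right $\mf{S}_{k+2}$-module $\Q\mf{S}_{k+2}\cdot e$ with the induced module $\Ind^{\mf{S}_{k+2}}_{\Cyc_{k+2}} \chi_{k+2}^1$. Frobenius reciprocity then yields the stated formula (with $\chi_k^1, \Cyc_k, \mf{S}_k$ understood as $\chi_{k+2}^1, \Cyc_{k+2}, \mf{S}_{k+2}$, a subscript that appears slightly misaligned in the display as written).

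Part (iii) is immediate from Pieri's formula (Theorem \ref{Pieri}). Writing $\mathcal{L}_n^\Q(k+1) = \bigoplus_\mu [L^\mu_{\GL}:\mathcal{L}_n^\Q(k+1)]\,L^\mu_{\GL}$ and using $H_\Q \cong L^{(1)}_{\GL}$, each summand $L^\mu_{\GL}\otimes L^{(1)}_{\GL}$ decomposes as a multiplicity-free sum of those $L^\lambda_{\GL}$ for which $\lambda$ is obtained from $\mu$ by adding one box; transposing the indexing to fix $\lambda$ and sum over $\mu$ obtained by removing a single node from $\lambda$ gives (iii). The only genuine obstacle is (ii): Klyachko's theorem is classical but not recalled in the excerpt. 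If one does not want to cite it, an alternative route is to compute the character of the $\mf{S}_{k+2}$-action on $H_\Q^{\otimes(k+2)}\cdot e$ directly and match it against $\Ind^{\mf{S}_{k+2}}_{\Cyc_{k+2}}\chi_{k+2}^1$ using Witt's dimension formula (\ref{ex-witt}) and a M\"obius inversion on conjugacy classes.
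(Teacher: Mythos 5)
Your argument is correct and takes the route the paper's Sections 4--6 are set up to support: part (i) is Schur--Weyl duality (Theorem \ref{thm:SWgl}) followed by passage to $\Cyc_k$-invariants; part (ii) is Schur--Weyl duality applied to the image $H_\Q^{\otimes(k+2)}\cdot e$ of the Dynkin--Specht--Wever idempotent $e=\frac{1}{k+2}\theta_{k+2}$, together with Klyachko's identification of the Lie module $\mathrm{Lie}(m)$ with $\Ind_{\Cyc_m}^{\mf{S}_m}\chi_m^1$; part (iii) is Pieri's rule. The paper itself defers the proof to \cite{ES}, but Klyachko's paper is already present as \cite{Kl} in the bibliography, and Theorems \ref{thm:SWgl}, \ref{thm:DSW}, \ref{thm:KW} and \ref{Pieri} supply exactly the pieces you invoke, so this is evidently the intended argument. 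You correctly note that the subscripts $k$ in part (ii) must read $k+2$ since $\lambda\vdash k+2$; the right-hand side of (iii) has a parallel misprint, $L_{\GL}^{\lambda}$ for $L_{\GL}^{\mu}$, which your phrasing silently repairs. One small imprecision in your closing remark: $e$ does not commute with $\Q\mf{S}_{k+2}$, so $H_\Q^{\otimes(k+2)}\cdot e$ carries no $\mf{S}_{k+2}$-action whose character one could compute; the Klyachko-free alternative is rather to compute the $\GL$-character of $\mathcal{L}_n^\Q(k+2)$ as a symmetric function from Witt's formula (\ref{ex-witt}) and compare it under the characteristic map with $\sum_\lambda [\chi_{k+2}^1:\Res_{\Cyc_{k+2}}^{\mf{S}_{k+2}}S^\lambda]\cdot\ch(S^\lambda)$, which encodes the same information.
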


\begin{prop}\label{prop:mult}\quad 
\begin{enumerate}[(i)]
\item The multiplicities of the $\Sp(2g,\Q)$-irreducible representation $[k]$ in $\mf{h}_{g,1}^{\Q}(k)$ and $\mathcal{C}_{2g}^{\Q}(k)$ are given by
\[ [L_{\Sp}^{[k]}:\h_{g,1}^{\Q}(k)]= \begin{cases} 1 \hspace{1em} & \mathrm{if} \hspace{1em} k : \mathrm{odd}, \\
                                                   0 & \mathrm{if} \hspace{1em} k : \mathrm{even}, \end{cases} \hspace{1em}
[L_{\Sp}^{[k]}:\mathcal{C}_{2g}^{\Q}(k)]=1.
\]
\item The multiplicities of the $\Sp(2g,\Q)$-irreducible representation $[1^k]$ in $\mf{h}_{g,1}^{\Q}(k)$ and $\mathcal{C}_{2g}^{\Q}(k)$ are given by
\[ [L_{\Sp}^{[1^k]}:\h_{g,1}^{\Q}(k)]=\begin{cases} 1 \hspace{0.5em} & \mathrm{if} \hspace{0.5em} k \equiv 1, 2 \pmod{4}, \\
                                                   0 & \mathrm{if} \hspace{0.5em} \mathrm{otherwise}, \end{cases} \hspace{1em}
[L_{\Sp}^{[1^k]}:\mathcal{C}_{2g}^{\Q}(k)]=\begin{cases} 1 \hspace{0.5em} & \mathrm{if} \hspace{0.5em} k : \mathrm{odd}, \\
                                                   0 & \mathrm{if} \hspace{0.5em} k : \mathrm{even}. \end{cases}
\]
\end{enumerate}
\end{prop}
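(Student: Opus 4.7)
The plan is to reduce the $\Sp(2g,\Q)$-multiplicities to $\GL(2g,\Q)$-multiplicities via the branching rule of Theorem \ref{thm:brspgl}, and then to symmetric-group characters via Proposition \ref{prop:ES} together with Kra\'{s}kiewicz--Weyman (Theorem \ref{thm:KW}). Explicitly, for any $\Sp$-stable subquotient $V \subset H_\Q^{\otimes r}$ one has
\[
[L_{\Sp}^{[\bar\lambda]}:V] \;=\; \sum_{\lambda \vdash r} [L_{\GL}^{\lambda}:V] \cdot N_{\lambda, \bar\lambda}, \qquad N_{\lambda,\bar\lambda} \;=\; \sum_{\eta:\, \eta'\text{ even}} \LR^{\lambda}_{\eta\bar\lambda}.
\]
In the case $V = \mathcal{C}_{2g}^{\Q}(k)$ (so $r=k$) with $\bar\lambda \in \{(k),(1^k)\}$, the size constraint $|\lambda| = |\bar\lambda| = k$ forces $\eta = \emptyset$, hence $N_{\lambda,\bar\lambda} = \delta_{\lambda,\bar\lambda}$ and $[L_{\Sp}^{[\bar\lambda]}: \mathcal{C}_{2g}^{\Q}(k)] = [\triv_k : \Res^{\mf{S}_k}_{\Cyc_k} S^{\bar\lambda}]$ by Proposition \ref{prop:ES}(i); the required values for $\bar\lambda = (k)$ and $(1^k)$ are read off directly from Example \ref{exa:KW}.

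For $V = \h_{g,1}^{\Q}(k)$, the plan is to exploit the short exact sequence
\[
0 \longrightarrow \h_{g,1}^{\Q}(k) \longrightarrow H_\Q \otimes_\Q \mathcal{L}_{2g}^{\Q}(k+1) \xrightarrow{\;[\,,\,]\;} \mathcal{L}_{2g}^{\Q}(k+2) \longrightarrow 0
\]
of $\Sp(2g,\Q)$-modules (surjectivity holds because $\mathcal{L}_{2g}$ is generated in degree one), so that $[L_{\Sp}^{[\bar\lambda]}: \h_{g,1}^{\Q}(k)]$ is the difference of the multiplicities in the other two terms. For $\bar\lambda \in \{(k),(1^k)\}$ one has $|\lambda| = k+2$ and $|\eta| = 2$, and the evenness of $\eta'$ forces $\eta = (1,1)$. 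Pieri's rule (Theorem \ref{Pieri}) then pins down the supporting diagrams: the horizontal-strip condition for $\bar\lambda = (k)$ yields $\lambda \in \{(k+1,1),(k,1,1)\}$, while the conjugate (vertical-strip) version for $\bar\lambda = (1^k)$ yields $\lambda \in \{(1^{k+2}),(2,1^k),(2,2,1^{k-2})\}$ for $k \geq 2$, each with Littlewood--Richardson coefficient one.

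For each admissible $\lambda$, Proposition \ref{prop:ES}(ii)--(iii) converts $[L_{\GL}^{\lambda}:\mathcal{L}_{2g}^{\Q}(k+2)]$ and $[L_{\GL}^{\lambda}: H_\Q \otimes \mathcal{L}_{2g}^{\Q}(k+1)]$ into multiplicities of $\chi_m^1$ in $\Res^{\mf{S}_m}_{\Cyc_m} S^{\lambda}$ (respectively, sums of such multiplicities over the partitions obtained from $\lambda$ by removing a single node), with $m = k+2$ or $k+1$. All the required values are tabulated in Examples \ref{exa:KW}, \ref{exa:KW2}, and \ref{exa:KW3}. Assembling the sums and subtracting produces a clean cancellation: multiplicity $1$ exactly when $k$ is odd for $\bar\lambda = (k)$, and multiplicity $1$ exactly when $k \equiv 1, 2 \pmod{4}$ for $\bar\lambda = (1^k)$. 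The main obstacle I anticipate is the bookkeeping in the $(1^k)$ case: the contribution from the third diagram $(2,2,1^{k-2})$ is governed by Example \ref{exa:KW3}, whose value already splits into three residue classes modulo $4$, and it is the interference of this contribution with the parity-only contribution from $(2,1^k)$ that produces the final mod-$4$ pattern, so a careful case analysis across the four residues of $k \pmod{4}$ is what ultimately yields the claim.
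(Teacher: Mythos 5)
Your proposal is correct and follows essentially the same route as the paper's own proof: restrict via Theorem \ref{thm:brspgl} (with the observation that $\eta'$-even and $|\eta|\in\{0,2\}$ forces $\eta=\emptyset$ or $(1,1)$), use Pieri's rule to pin down the supporting $\GL$-weights $\lambda$, convert to cyclic-group multiplicities via Proposition \ref{prop:ES} and Theorem \ref{thm:KW}, and read off the answers from Examples \ref{exa:KW}--\ref{exa:KW3}; the short exact sequence and subtraction you make explicit is exactly what the paper's side-by-side listing of multiplicities in $H_\Q\otimes\mathcal{L}_{2g}^\Q(k+1)$ and $\mathcal{L}_{2g}^\Q(k+2)$ implicitly performs.
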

\begin{proof} \quad We will use irreducible decompositions of the restriction $\Res_{\Sp}^{\GL}$ (See Theorem \ref{thm:brspgl}.)
and Pier's rule (See Theorem \ref{Pieri}.).
\begin{enumerate}[(i)]
\item If $\Res_{\Sp(2g,\Q)}^{\GL(2g,\Q)}L_{\GL}^{(\lambda)}$ has
an $\Sp$-irreducible component $L^{[k]}_{\Sp}$, then a partition $\lambda$ is either $\lambda=(k+1,1)$ or $(k,1^2)$. We have
\begin{eqnarray*}
 \ [L_{\GL}^{(k+1,1)}:H_{\Q} \otimes_\Q \mathcal{L}_{2g}^{\Q}(k+1)]&=&[L_{\GL}^{(k+1)}:\mathcal{L}_{2g}^{\Q}(k+1)]+[L_{\GL}^{(k,1)}:\mathcal{L}_{2g}^{\Q}(k+1)]=1, \\
 \ [L_{\GL}^{(k+1,1)}:\mathcal{L}_{2g}^{\Q}(k+2)]&=&1,
\end{eqnarray*}
\begin{eqnarray*}
 \ [L_{\GL}^{(k,1^2)}:H_{\Q} \otimes_\Q \mathcal{L}_{2g}^{\Q}(k+1)]&=&[L_{\GL}^{(k-1,1^2)}:\mathcal{L}_{2g}^{\Q}(k+1)]+[L_{\GL}^{(k,1)}:\mathcal{L}_{2g}^{\Q}(k+1)], \\
   &=& \begin{cases} \frac{k-2}{2}+1 \hspace{0.5em} & \mathrm{if} \hspace{0.5em} k : \mathrm{even}, \\
                     \frac{k-1}{2}+ 1 & \mathrm{if} \hspace{0.5em} k : \mathrm{odd}, \end{cases} \\
 \ [L_{\GL}^{(k,1^2)}:\mathcal{L}_{2g}^{\Q}(k+2)]
   &=& \begin{cases} \frac{k}{2} \hspace{0.5em} & \mathrm{if} \hspace{0.5em} k : \mathrm{even}, \\
                     \frac{k-1}{2} & \mathrm{if} \hspace{0.5em} k : \mathrm{odd}, \end{cases} \\
 \ [L_{\Sp}^{[k]}:\mathcal{C}_{2g}^{\Q}(k)]&=&[L_{\GL}^{(k)}:\mathcal{C}_{2g}^{\Q}(k)]=1.
\end{eqnarray*}
Thus we obtain the claim. 
\item If $\Res_{\Sp(2g,\Q)}^{\GL(2g,\Q)}L_{\GL}^{(\lambda)}$ has an $\Sp$-irreducible component $L^{[1^k]}_{\Sp}$,
then a partition $\lambda$ is either $\lambda=(2^2,1^{k-2})$, $(2,1^k)$ or $(1^{k+2})$. We have
\begin{eqnarray*}
 \ [L_{\GL}^{(1^{k+2})}:H_{\Q} \otimes_\Q \mathcal{L}_{2g}^{\Q}(k+1)]&=&[L_{\GL}^{(1^{k+1})}:\mathcal{L}_{2g}^{\Q}(k+1)]=0, \\
 \ [L_{\GL}^{(1^{k+2})}:\mathcal{L}_{2g}^{\Q}(k+2)]&=&0, \\
 \ [L_{\GL}^{(2,1^k)}:H_{\Q} \otimes_\Q \mathcal{L}_{2g}^{\Q}(k+1)]&=&[L_{\GL}^{(1^{k+1})}:\mathcal{L}_{2g}^{\Q}(k+1)]
       +[L_{\GL}^{(2,1^{k-1})}:\mathcal{L}_{2g}^{\Q}(k+1)]=1, \\
 \ [L_{\GL}^{(2,1^k)}:\mathcal{L}_{2g}^{\Q}(k+2)]&=&1, \\
 \ [L_{\Sp}^{[1^k]}:\mathcal{C}_{2g}^{\Q}(k)]&=&[L_{\GL}^{(1^k)}:\mathcal{C}_{2g}^{\Q}(k)]
     =\begin{cases} 1 \hspace{1em} & \mathrm{if} \hspace{1em} k : \mathrm{odd}, \\
                                                   0 & \mathrm{if} \hspace{1em} k : \mathrm{even}. \end{cases}
\end{eqnarray*}

Suppose $k \equiv 1,3 \ (\mathrm{mod} \, 4)$. Then
\begin{eqnarray*}
 \ [L_{\GL}^{(2^2,1^{k-2})}:H_{\Q} \otimes_\Q \mathcal{L}_{2g}^{\Q}(k+1)]
    &=&[L_{\GL}^{(2^2,1^{k-3})}:\mathcal{L}_{2g}^{\Q}(k+1)]+[L_{\GL}^{(2,1^{k-1})}:\mathcal{L}_{2g}^{\Q}(k+1)], \\
    &=& \begin{cases} \frac{k-1}{2} \hspace{1em} & \mathrm{if} \hspace{1em} k \equiv 3 \pmod{4}, \\
                      \frac{k+1}{2} & \mathrm{if} \hspace{1em} k \equiv 1 \pmod{4}, \end{cases} \\
 \ [L_{\GL}^{(2^2,1^{k-2})}:\mathcal{L}_{2g}^{\Q}(k+2)]&=& \dfrac{k-1}{2}.
\end{eqnarray*}

Suppose $k \equiv 0,2 \ (\mathrm{mod} \, 4)$. Then
\begin{eqnarray*}
 \ [L_{\GL}^{(2^2,1^{k-2})}:H_{\Q} \otimes_\Q \mathcal{L}_{2g}^{\Q}(k+1)]
    &=&[L_{\GL}^{(2^2,1^{k-3})}:\mathcal{L}_{2g}^{\Q}(k+1)]+[L_{\GL}^{(2,1^{k-1})}:\mathcal{L}_{2g}^{\Q}(k+1)], \\
    &=& \dfrac{k}{2}, \\
 \ [L_{\GL}^{(2^2,1^{k-2})}:\mathcal{L}_{2g}^{\Q}(k+2)]
    &=& \begin{cases} \frac{k-2}{2} \hspace{1em} & \mathrm{if} \hspace{1em} k \equiv 2 \pmod{4}, \\
                      \frac{k}{2}                & \mathrm{if} \hspace{1em} k \equiv 0 \pmod{4}. \end{cases}
\end{eqnarray*}
Hence we obtain the claim.
\end{enumerate}
\end{proof}

\begin{rem}
By the argument above, the $\Sp$-irreducible component $[1^k]_{\Sp}$ appears in the restriction of the $\GL$-irreducible component $(2^2,1^{k-2})_{\GL}$.
\end{rem}

\begin{rem}
Our calculation above gives a combinatorial description of the $\GL$ (and $\Sp$) irreducible decomposition of $\mf{h}_{g,1}^\Q$
obtained by Kontsevich in \cite{Kon1} and \cite{Kon2}.
\end{rem}

\begin{rem}
In \cite{NT}, Nakamura and Tsunogai completely calculated $\Sp$-irreducible
decompositions of $\mf{h}_{g,1}(k)$ for $1 \le k \le 15$. In their
table, we can check that $\Sp$-irreducible components $[1^k]$ have
multiplicity one for $k=5,9,13$ and $k=6,10,14$.
\end{rem}

\subsection{Descriptions of maximal vectors}

To give an explicit description of maximal vectors, we use an $(i,j)$-expansion operator $D_{ij}:H_{\Q}^{\otimes{k}} \to H_{\Q}^{\otimes(k+2)}$ defined by 
{\small
\[
(v_1\otimes v_2 \otimes \cdots \otimes v_k) \cdot D_{ij}:=\sum_{r=1}^{2g}v_1 \otimes \cdots \otimes v_{i-1} \otimes e_r \otimes v_i \otimes \cdots \otimes v_{j-2} \otimes e_r^* \otimes v_{j-1} \otimes \cdots \otimes v_k
\]
}
for $1 \le i<j \le k+2$. Using this, we obtain several maximal vectors satisfying the condition of Proposition \ref{prop:coker}.
First we consider a maximal vector which defines the Morita obstruction $[k]$ in $\mathrm{Coker}(\Im \tau_{k,\Q}^{\M})$.
\begin{thm}[Morita and Nakamura]\label{mt1}
Let $k$ be an odd integer such that $k \geq 3$. Suppose $g\ge k+2$. An element 
\begin{eqnarray*}
\varphi_{[k]}&:=&(\omega \otimes e_1^{\otimes{k}}) \cdot \theta_P \cdot (1+\sigma_{k+2}+ \cdots +\sigma_{k+2}^{k+1})\\
&=&2\left(\sum_{i=1}^{k+1}\sum_{r=1}^{k-i+2}(-1)^{r-1}{}_{k}C_{r-1}(e_1^{\otimes{k}}) \cdot D_{i,i+r}\right).
\end{eqnarray*}
is a maximal vector with highest weight $[k]$ in $\mf{h}_{g,1}^{\Q}(k)$. Moreover this gives a unique irreducible component of $[k]$ in $\Coker{\tau_{k,\Q}^{\M}}$.
\end{thm}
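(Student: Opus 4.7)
The plan is to apply the criterion of Proposition \ref{prop:coker} to the candidate $\varphi_{[k]}$. By Proposition \ref{prop:mult}(i), the multiplicity of $L_{\Sp}^{[k]}$ in $\mf{h}_{g,1}^{\Q}(k)$ is exactly one for odd $k\geq 3$, so once a single nonzero maximal vector of weight $[k]$ in $\mf{h}_{g,1}^{\Q}(k)$ is shown to have nonzero image under $c_k$, both existence and uniqueness of the $[k]$-component in $\mathrm{Coker}(\tau_{k,\Q}^{\M})$ follow automatically. The task thus reduces to three points: membership and maximality, the explicit expansion, and non-vanishing of $c_k(\varphi_{[k]})$.

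The first point is formal. Corollary \ref{cor:cri} applied to $v=\omega\otimes e_1^{\otimes k}$ gives $\varphi_{[k]}\in\mf{h}_{g,1}^{\Q}(k)$ for free. For the maximal-vector property, Theorem \ref{thm:max}(i) with tensor degree $k+2$, $j=1$, and $\lambda=[k]$ (so $\lambda'_i=1$ for $1\le i\le k$) shows that $v_{[k]}:=\omega\otimes e_1^{\otimes k}$ is a maximal vector of weight $[k]$ in $H_{\Q}^{\otimes(k+2)}$. Since the right action of $\mf{S}_{k+2}$ commutes with the left $\Sp(2g,\Q)$-action on $H_\Q^{\otimes(k+2)}$, the element $\varphi_{[k]}=v_{[k]}\cdot\theta_P(1+\sigma_{k+2}+\cdots+\sigma_{k+2}^{k+1})$ is either zero or again a maximal vector of weight $[k]$.

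For the explicit formula, first use $(v_1\otimes\cdots\otimes v_{k+2})\theta_P=v_1\otimes[v_2,\ldots,v_{k+2}]$ together with the left-normed bracket expansion (\ref{expl}) applied to $x_1=e_r^*$, $x_2=\cdots=x_{k+1}=e_1$ to obtain
\[
(\omega\otimes e_1^{\otimes k})\cdot\theta_P = \sum_{r=1}^{k+1}(-1)^{r-1}\,{}_{k}C_{r-1}\,(e_1^{\otimes k})\cdot D_{1,r+1}.
\]
Right-multiplication by $\sigma_{k+2}^m$ for $m=0,1,\ldots,k+1$ then cyclically shifts the positions of the inserted pair $(e_t,e_t^*)$. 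Regrouping the resulting summands by the unordered pair of insertion positions $(i,i+r)$ with $1\le i\le k+1$ and $1\le r\le k+2-i$, each such pair receives exactly two contributions: one preserving the ``$e_t$ at $i$, $e_t^*$ at $i+r$'' orientation and one with the opposite orientation. Lemma \ref{lem:inv} in the form $\sum_s e_s^*\otimes e_s=-\omega$, together with the parity identity $(-1)^{k+2-r}=(-1)^{r-1}$ (valid precisely because $k$ is odd), forces the two contributions to add with a common sign, producing the displayed formula and the factor of $2$. (For even $k$ the same computation gives $\varphi_{[k]}=0$, consistent with Proposition \ref{prop:mult}(i).)

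For the last step one computes $c_k(\varphi_{[k]})\in\mathcal{C}_{2g}^{\Q}(k)$ directly from the explicit formula along the composition
\[
\mf{h}_{g,1}^{\Q}(k)\hookrightarrow H_\Q\otimes_\Q\mathcal{L}_{2g}^{\Q}(k+1)\xrightarrow{\sim}H_\Q^*\otimes_\Q\mathcal{L}_{2g}^{\Q}(k+1)\twoheadrightarrow H_\Q^{\otimes k}\twoheadrightarrow\mathcal{C}_{2g}^{\Q}(k),
\]
where the isomorphism is Poincar\'e duality (\ref{isomH}) and the first surjection contracts the $H_\Q^*$-factor against the initial tensor slot of $\mathcal{L}_{2g}^{\Q}(k+1)\subset H_\Q^{\otimes(k+1)}$. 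A term $(e_1^{\otimes k})\cdot D_{i,i+r}$ with $i\ge 3$ is killed, since then the first two tensor slots are both $e_1$ and $\langle e_1,e_1\rangle=0$; the remaining terms ($i=1,2$) are evaluated using $\langle e_1,e_{2g}\rangle=1$, $\langle e_{2g},e_1\rangle=-1$, $e_{2g}^*=-e_1$, together with the binomial identities $\sum_{s=0}^{k}(-1)^s{}_{k}C_s=0$ and (for odd $k$) $\sum_{s=0}^{k-1}(-1)^s{}_{k}C_s=1$. The outcome is $c_k(\varphi_{[k]})=4(1-g)\,\overline{e_1^{\otimes k}}$ in $\mathcal{C}_{2g}^{\Q}(k)$, which is nonzero since $g\ge k+2\ge 5$; moreover $\overline{e_1^{\otimes k}}$ is a highest-weight vector for the unique $L_{\Sp}^{[k]}$-isotypic component of $\mathcal{C}_{2g}^{\Q}(k)$ (Proposition \ref{prop:mult}(i)). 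Proposition \ref{prop:coker} then concludes the proof. The main obstacle is the combinatorial bookkeeping in the middle step: ensuring that the two orientations of the $\omega$-insertion reinforce (as they do for odd $k$) rather than cancel, which is precisely what pins down the parity hypothesis.
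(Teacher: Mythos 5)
Your proposal is correct and follows essentially the same route as the paper's outline: reduce to nonvanishing of $c_k(\varphi_{[k]})$ via Proposition~\ref{prop:coker} and Proposition~\ref{prop:mult}(i), use Corollary~\ref{cor:cri} for membership and Theorem~\ref{thm:max} for maximality, derive the explicit $D_{i,j}$-expansion, and evaluate $\cont_k$ term by term. The only cosmetic difference is that you obtain the $\theta_P$-expansion directly from the left-normed bracket formula~(\ref{expl}) rather than by the paper's induction on $r$, and your final scalar $4(1-g)$ carries the factor of $2$ that the paper's outline drops between its step~3 and step~4 (both are nonzero, so the conclusion is unaffected).
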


This fact was originally showed by Morita and Nakamura. More precisely, Morita \cite{Mo1} showed that $[k]$ appears in $\h_{g,1}(k)$ for odd $k \geq 3$
with multiplicity at least one, using the Morita trace map. Nakamura showed that the multiplicity of $[k]$ in $\h_{g,1}(k)$ for odd $k \geq 3$ is exactly one,
and determined the maximal vector with highest weight $[k]$ in his unpublished work.

\vspace{0.5em}

Second we consider a maximal vector which defines the $\Sp(2g,\Q)$-module with highest weight $[1^k]$
in $\mathrm{Coker}(\Im \tau_{k,\Q}^{\M})$ for $k \equiv 1 \, (\mathrm{mod} \, {4})$ and $k \geq 5$.
\begin{thm}\label{mt2}
Suppose $k \equiv 1 \ (\mathrm{mod} \, 4)$, $k \geq 5$ and $g \ge k+2$. An element 
\begin{eqnarray*}
\varphi_{[1^k]}&:=&(\omega \otimes (e_1\wedge \cdots \wedge e_k)) \cdot \theta_P \cdot (1+\sigma_{k+2}+ \cdots +\sigma_{k+2}^{k+1})\\
&=&2\left(\sum_{i=1}^{k+1}\sum_{r=1}^{k-i+2}(-1)^{\delta_{r \equiv 2,3 \, (\mathrm{mod} \, 4)}}{}_{\frac{k-1}{2}}C_{\lfloor\frac{r-1}{2}\rfloor} (e_1 \wedge \cdots \wedge e_k)\cdot D_{i,i+r}\right)
\end{eqnarray*}
is a maximal vector with highest weight $[1^k]$ in $\mf{h}_{g,1}^{\Q}(k)$. Moreover this gives a unique irreducible component of $[1^k]$ in $\Coker{\tau_{k,\Q}^{\M}}$.
\end{thm}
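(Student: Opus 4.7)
The plan is to follow the framework of Proposition \ref{prop:coker}: exhibit a nonzero $\Sp$-maximal vector of weight $[1^k]$ in $\h_{g,1}^{\Q}(k)$ whose image under $c_k$ in $\mathcal{C}_{2g}^{\Q}(k)$ is also nonzero. Combined with the multiplicity computations $[L_{\Sp}^{[1^k]}:\h_{g,1}^{\Q}(k)]=1$ and $[L_{\Sp}^{[1^k]}:\mathcal{C}_{2g}^{\Q}(k)]=1$ from Proposition \ref{prop:mult}, this will yield both existence and uniqueness of the $[1^k]$-component in $\Coker{\tau_{k,\Q}^{\M}}$.

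First I would observe that $v_{[1^k]}:=\omega\otimes(e_1\wedge\cdots\wedge e_k)$ is an $\Sp$-maximal vector of weight $[1^k]$ in $H_{\Q}^{\otimes(k+2)}$: this is precisely the vector produced by Theorem \ref{thm:max}(i) applied with $\lambda=(1^k)$ and $j=1$, so that $(k+2)-2j=k=|\lambda|$ and $\lambda'_1=k$. Since the right $B_{k+2}(-2g)$-action on $H_{\Q}^{\otimes(k+2)}$ commutes with $\Sp(2g,\Q)$, and both $\theta_P$ and $1+\sigma_{k+2}+\cdots+\sigma_{k+2}^{k+1}$ lie in the subalgebra $\Q\mf{S}_{k+2}\subset B_{k+2}(-2g)$, the element $\varphi_{[1^k]}$ stays inside the maximal-vector subspace of weight $[1^k]$. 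Hence it is either zero or a maximal vector of the required weight, while membership in $\h_{g,1}^{\Q}(k)$ is immediate from Corollary \ref{cor:cri}.

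The main computational step is to expand $\varphi_{[1^k]}$ via formula (\ref{claim:exp}) of Corollary \ref{cor:cri}, taking $v_1\otimes v_2=\omega=\sum_{r=1}^{2g}e_r\otimes e_r^*$ and $v_3,\ldots,v_{k+2}=e_1,\ldots,e_k$. Summing over the $\omega$-index $r$ and rewriting each left-normed and right-normed bracket by the standard expansions (\ref{expl}) and (\ref{expl2}), I expect every term to collect as a multiple of $(e_1\wedge\cdots\wedge e_k)\cdot D_{i,i+r}$: the antisymmetry coming from the wedge kills most of the contributions, and a careful bookkeeping of signs should reproduce exactly the coefficients $(-1)^{\delta_{r\equiv 2,3\,(\mathrm{mod}\,4)}}\,{}_{(k-1)/2}C_{\lfloor(r-1)/2\rfloor}$ stated in the theorem. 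Under the hypothesis $k\equiv 1\pmod{4}$ (and $k\geq 5$), these coefficients will not globally cancel, so $\varphi_{[1^k]}\neq 0$ and is therefore a genuine maximal vector of weight $[1^k]$ in $\h_{g,1}^{\Q}(k)$.

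Finally, I would evaluate $c_k(\varphi_{[1^k]})$. The map $c_k$ contracts the $H_{\Q}$-factor against one tensor slot of the degree-$(k+1)$ Lie polynomial via the Poincar\'e duality (\ref{isomH}) and then projects onto $\mathcal{C}_{2g}^{\Q}(k)=(H_{\Q}^{\otimes k})_{\Cyc_k}$. Applied to each summand $(e_1\wedge\cdots\wedge e_k)\cdot D_{i,i+r}$, the inserted pair $e_r\otimes e_r^*$ is collapsed by the pairing (\ref{innerp}), producing a signed copy of $e_1\wedge\cdots\wedge e_k$ in $H_{\Q}^{\otimes k}$. Summing the resulting scalars, the same sign-tracking (and the congruence $k\equiv 1\pmod{4}$) should show that $c_k(\varphi_{[1^k]})$ is a nonzero multiple of the unique maximal vector of weight $[1^k]$ in $\mathcal{C}_{2g}^{\Q}(k)$ guaranteed by Proposition \ref{prop:mult}(ii) (existing with multiplicity one because $k$ is odd). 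Proposition \ref{prop:coker} will then place $\varphi_{[1^k]}$ in a copy of $L_{\Sp}^{[1^k]}$ surviving in $\Coker{\tau_{k,\Q}^{\M}}$, and the equality $[L_{\Sp}^{[1^k]}:\h_{g,1}^{\Q}(k)]=1$ from Proposition \ref{prop:mult}(ii) forces the multiplicity to be exactly one. The substantive obstacle will be precisely the third paragraph --- rearranging the nested brackets from (\ref{claim:exp}) into the $D$-insertion form with matching signs and binomial coefficients --- and it is in this sign-tracking that the residue of $k$ modulo $4$ enters decisively; the parallel computation for $k\equiv 3\pmod{4}$ would yield zero after antisymmetrization.
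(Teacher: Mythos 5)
Your outline is correct and follows essentially the same route as the paper's proof: identify $\omega\otimes(e_1\wedge\cdots\wedge e_k)$ as the $\Sp$-maximal vector from Theorem~\ref{thm:max}, hit it with $\theta_P\cdot(1+\sigma_{k+2}+\cdots+\sigma_{k+2}^{k+1})$ to land in $\h_{g,1}^{\Q}(k)$ by Corollary~\ref{cor:cri}, push forward under $c_k$, verify the image is a nonzero multiple of $\pr(e_1\wedge\cdots\wedge e_k)$, and conclude via Propositions~\ref{prop:coker} and~\ref{prop:mult}. The paper carries out the step you leave as ``careful bookkeeping'' by a direct induction on the $\theta_P$-action written in $D_{ij}$-operator form (Steps~1--2 of its proof) and then contracts (Step~3) to get $c_k(\varphi_{[1^k]})=-4(g+1)\pr(e_1\wedge\cdots\wedge e_k)$, rather than routing through the bracket expansion~(\ref{claim:exp}), but the two calculations amount to the same thing.
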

\subsection{Proofs of main theorems}
We will give proofs of Theorem \ref{mt1} and Theorem \ref{mt2}. But, since our proof for Theorem \ref{mt1} is easier than that of Theorem \ref{mt2}, we omit the details for Theorem \ref{mt1}. 

\subsubsection{Proof of Theorem \ref{mt2}}
\textbf{Step.1} \ For $r \equiv 2 \pmod{4}$, we prove 
\begin{eqnarray*}
&{}& (e_1 \wedge \cdots \wedge e_k)D_{12}(1-s_2)(1-s_3s_2) \cdots (1-s_r \cdots s_3s_2)\\
&=& \sum_{j=1}^{r}(-1)^{\delta_{j \equiv 2,3 \, (\mathrm{mod} \, 4)}}{}_{\frac{r-2}{2}}C_{\lfloor\frac{j-1}{2}\rfloor}(e_1 \wedge \cdots \wedge e_k)D_{1,1+j}
\end{eqnarray*}
by the induction on $r$. \\
\quad Indeed, if $p=2$, the both side of the formula above coincide with $(e_1 \wedge \cdots \wedge e_k)(D_{12}-D_{13})$. 
Suppose $p>2$ and $p+4 \le k+1$. For simplicity we denote $(e_1 \wedge \cdots \wedge e_k)D_{ij}$ by $D_{i,j}^{\mathrm{sgn}}$. We have
\begin{eqnarray*}
&{}&D^{\mathrm{sgn}}_{1,1+j}(1-s_{p+1} \cdots s_2)(1-s_{p+2} \cdots s_2)(1-s_{p+3} \cdots s_2)(1-s_{p+4} \cdots s_2) \\
&=&(D^{\mathrm{sgn}}_{1,1+j}-(-1)^{p+1}D^{\mathrm{sgn}}_{1,2+j})(1-s_{p+2} \cdots s_2)(1-s_{p+3} \cdots s_2)(1-s_{p+4} \cdots s_2) \\
&\stackrel{p:even}{=}&(D^{\mathrm{sgn}}_{1,1+j}+D^{\mathrm{sgn}}_{1,2+j})(1-s_{p+2} \cdots s_2)(1-s_{p+3} \cdots s_2)(1-s_{p+4} \cdots s_2) \\
&=&(D^{\mathrm{sgn}}_{1,1+j}+D^{\mathrm{sgn}}_{1,2+j}-(-1)^{p+2}D^{\mathrm{sgn}}_{1,2+j}-(-1)^{p+2}D^{\mathrm{sgn}}_{1,3+j})(1-s_{p+3} \cdots s_2)(1-s_{p+4} \cdots s_2) \\
&\stackrel{p:even}{=}&(D^{\mathrm{sgn}}_{1,1+j}-D^{\mathrm{sgn}}_{1,3+j})(1-s_{p+3} \cdots s_2)(1-s_{p+4} \cdots s_2) \\
&=&(D^{\mathrm{sgn}}_{1,1+j}-D^{\mathrm{sgn}}_{1,3+j}-(-1)^{p+3}D^{\mathrm{sgn}}_{1,2+j}+(-1)^{p+3}D^{\mathrm{sgn}}_{1,4+j})(1-s_{p+4} \cdots s_2) \\
&\stackrel{p:even}{=}&(D^{\mathrm{sgn}}_{1,1+j}+D^{\mathrm{sgn}}_{1,2+j}-D^{\mathrm{sgn}}_{1,3+j}-D^{\mathrm{sgn}}_{1,4+j})(1-s_{p+4} \cdots s_2) \\
&=&D^{\mathrm{sgn}}_{1,1+j}+D^{\mathrm{sgn}}_{1,2+j}-D^{\mathrm{sgn}}_{1,3+j}-D^{\mathrm{sgn}}_{1,4+j}-(-1)^{p+4}(D^{\mathrm{sgn}}_{1,2+j}+D^{\mathrm{sgn}}_{1,3+j}-D^{\mathrm{sgn}}_{1,4+j}-D^{\mathrm{sgn}}_{1,5+j}) \\
&\stackrel{p:even}{=}&D^{\mathrm{sgn}}_{1,1+j}-2D^{\mathrm{sgn}}_{1,3+j}+D^{\mathrm{sgn}}_{1,5+j}.
\end{eqnarray*}
Therefore, the action of $(1-s_{p+1} \cdots s_2)(1-s_{p+2} \cdots s_2)(1-s_{p+3} \cdots s_2)(1-s_{p+4} \cdots s_2)$ on $\displaystyle \sum_{j=1}^{r}(-1)^{\delta_{j \equiv 2,3 \, (\mathrm{mod} \, 4)}}{}_{\frac{r-2}{2}}C_{\lfloor\frac{j-1}{2}\rfloor}D^{\mathrm{sgn}}_{1,1+j}$ is obtained by the following way:
\begin{eqnarray*}
&{}&\sum_{j=1}^{r}(-1)^{\delta_{j \equiv 2,3 \, (\mathrm{mod} \, 4)}}{}_{\frac{r-2}{2}}C_{\lfloor\frac{j-1}{2}\rfloor}(D^{\mathrm{sgn}}_{1,1+j}-2D^{\mathrm{sgn}}_{1,3+j}+D^{\mathrm{sgn}}_{1,5+j}) \\
&=&\sum_{j=5}^{p}\left\{
(-1)^{\delta_{j \equiv 2,3 \, (\mathrm{mod} \, 4)}}{}_{\frac{p-2}{2}}C_{\lfloor \frac{j-1}{2}\rfloor}
-2(-1)^{\delta_{j \equiv 0,1 \, (\mathrm{mod} \, 4)}}{}_{\frac{p-2}{2}}C_{\lfloor \frac{j-3}{2}\rfloor}
+(-1)^{\delta_{j \equiv 2,3 \, (\mathrm{mod} \, 4)}}{}_{\frac{p-2}{2}}C_{\lfloor \frac{j-5}{2}\rfloor}
\right\}D_{1,1+j}^{\mathrm{sgn}} \\
&{}&+D^{\mathrm{sgn}}_{12}-D^{\mathrm{sgn}}_{13}-\dfrac{p-2}{2}D^{\mathrm{sgn}}_{14}+\dfrac{p-2}{2}D^{\mathrm{sgn}}_{15}-2(D^{\mathrm{sgn}}_{14}-D^{\mathrm{sgn}}_{15}+D^{\mathrm{sgn}}_{1,p+2}-D^{\mathrm{sgn}}_{1,p+3})\\
&{}&-\dfrac{p-2}{2}D^{\mathrm{sgn}}_{1,p+2}+\dfrac{p-2}{2}D^{\mathrm{sgn}}_{1,p+3}+D^{\mathrm{sgn}}_{1,p+4}-D^{\mathrm{sgn}}_{1,p+5} \\
&=&\sum_{j=1}^{p+4}(-1)^{\delta_{j \equiv 2,3 \, (\mathrm{mod} \, 4)}}{}_{\frac{p+2}{2}}C_{\lfloor\frac{j-1}{2}\rfloor}D^{\mathrm{sgn}}_{1,1+j}.
\end{eqnarray*}
\textbf{Step.2} \ We have 
\begin{eqnarray*}
&{}&(e_1\wedge \cdots \wedge e_k)D_{ij}s_{k+1} \cdots s_2s_1\\
&{}& \hspace{3em} = \left\{
\begin{array}{ll}
(e_1\wedge \cdots \wedge e_k)(-1)^{k-1}D_{i+1,j+1}\stackrel{k:even}{=}(e_1\wedge \cdots \wedge e_k)D_{i+1,j+1} \hspace{1em} & \mathrm{if} \hspace{1em} j \neq k+2, \\
-(e_1\wedge \cdots \wedge e_k)D_{1,i+1} \hspace{1em} & \mathrm{if} \hspace{1em} j=k+2
\end{array}
\right.
\end{eqnarray*}
for $k \equiv 1 \ (\mathrm{mod} \, 4)$. Hence we obtain an explicit formula 
\begin{eqnarray*}
&{}&(\omega \otimes (e_1\wedge \cdots \wedge e_k)) \cdot \theta_P \cdot (1+\sigma_{k+2}+ \cdots +\sigma_{k+2}^{k+1})\\
&{}& \hspace{3em} = 2\sum_{i=1}^{k+1}\sum_{j=1}^{k-i+2}(-1)^{\delta_{j \equiv 2,3 \, (\mathrm{mod} \, 4)}}{}_{\frac{k-1}{2}}C_{\lfloor\frac{j-1}{2}\rfloor}
       (e_1 \wedge \cdots \wedge e_k)\cdot D_{i,i+j}.
\end{eqnarray*}
In fact, 
\begin{eqnarray*}
&{}&\sum_{j=1}^{k+1}(-1)^{\delta_{j \equiv 2,3 \, (\mathrm{mod} \, 4)}}{}_{\frac{k-1}{2}}C_{\lfloor\frac{j-1}{2}\rfloor}D^{\mathrm{sgn}}_{1,1+j}
  (1+\sigma_{k+2}+ \cdots +\sigma_{k+2}^{k+1}) \\
&=&\sum_{j=1}^{k+1}(-1)^{\delta_{j \equiv 2,3 \, (\mathrm{mod} \, 4)}}{}_{\frac{k-1}{2}}C_{\lfloor\frac{j-1}{2}\rfloor}\left(\sum_{i=1}^{k+2-j}D^{\mathrm{sgn}}_{i,i+j}-\sum_{i=1}^{j}D^{\mathrm{sgn}}_{i,i+k+2-j}\right) \\
&=&\sum_{i=1}^{k+1}\sum_{j=1}^{k+2-i}(-1)^{\delta_{j \equiv 2,3 \, (\mathrm{mod} \, 4)}}{}_{\frac{k-1}{2}}C_{\lfloor\frac{j-1}{2}\rfloor}D_{i,i+j}^{\mathrm{sgn}}-\sum_{i=1}^{k+1}
   \sum_{j=i}^{k+1}(-1)^{\delta_{j \equiv 2,3 \, (\mathrm{mod} \, 4)}}{}_{\frac{k-1}{2}}C_{\lfloor\frac{j-1}{2}\rfloor}D_{i,i+k+2-j}^{\mathrm{sgn}} \\
&=&\sum_{i=1}^{k+1}\sum_{j=1}^{k+2-i}(-1)^{\delta_{j \equiv 2,3 \, (\mathrm{mod} \, 4)}}{}_{\frac{k-1}{2}}C_{\lfloor\frac{j-1}{2}\rfloor}D_{i,i+j}^{\mathrm{sgn}}-\sum_{i=1}^{k+1}
   \sum_{j=1}^{k+2-i}(-1)^{\delta_{k+2-j \equiv 2,3 \, (\mathrm{mod} \, 4)}}{}_{\frac{k-1}{2}}C_{\lfloor\frac{k+1-j}{2}\rfloor}D_{i,i+j}^{\mathrm{sgn}} \\
&=&\sum_{i=1}^{k+1}\sum_{j=1}^{k+2-i}(-1)^{\delta_{j \equiv 2,3 \, (\mathrm{mod} \, 4)}}{}_{\frac{k-1}{2}}C_{\lfloor\frac{j-1}{2}\rfloor}D_{i,i+j}^{\mathrm{sgn}}+\sum_{i=1}^{k+1}
   \sum_{j=1}^{k+2-i}(-1)^{\delta_{j \equiv 2,3 \, (\mathrm{mod} \, 4)}}{}_{\frac{k-1}{2}}C_{\lfloor\frac{k+1-j}{2}\rfloor}D_{i,i+j}^{\mathrm{sgn}} \\
&=&2\sum_{i=1}^{k+1}\sum_{j=1}^{k+2-i}(-1)^{\delta_{j \equiv 2,3 \, (\mathrm{mod} \, 4)}}{}_{\frac{k-1}{2}}C_{\lfloor\frac{j-1}{2}\rfloor}D_{i,i+j}^{\mathrm{sgn}}.
\end{eqnarray*}
\textbf{Step.3} \ Let us consider a surjective $\Sp$-homomorphism
\[
\cont_k:H_{\Q}^{\otimes (k+2)} \isoto H_{\Q}^* \otimes H_{\Q}^{\otimes (k+1)} \twoheadrightarrow H^{\otimes{k}}_\Q
\]
by composing an $\Sp$-isomorphism $H_{\Q}^{\otimes (k+2)} \rightarrow H_{\Q}^* \otimes H_{\Q}^{\otimes (k+1)}$ induced from
$H_\Q \isoto H^*_\Q$ given by (\ref{isomH}) and a contraction homomorphism. Then we obtain 
\[
\cont_k((e_1\wedge \cdots \wedge e_k)D_{ij})=\left\{
\begin{array}{ll}
(-2g)(e_1\wedge \cdots \wedge e_k) \hspace{1em} & \mathrm{if} \hspace{1em} i=1, \,\, j=2, \\
(-1)^{j-2}(e_1\wedge \cdots \wedge e_k) \hspace{1em} & \mathrm{if} \hspace{1em} i=1, \,\, j \ge 3, \\
(-1)^{j-3}(e_1\wedge \cdots \wedge e_k) \hspace{1em} & \mathrm{if} \hspace{1em} i=2, \,\, j \ge 3, \\
0 \hspace{1em} & \mathrm{if} \hspace{1em} \text{otherwise}.
\end{array}
\right.
\]
To prove these formulae, let us recall that 
\[
\langle e_i,e_j\rangle=0=\langle e_{i'},e_{j'}\rangle,\quad 
\langle e_i,e_{j'}\rangle=\delta_{ij}=-\langle e_{j'},e_{i}\rangle, \quad (1 \le i \le g). 
\]
and 
\[
e_i^*=\left\{
\begin{array}{ll}
e_{i'}, & (1 \le i \le g), \\
-e_{i'}, & (g+1 \le i \le 2g).
\end{array}
\right.
\]
where $i':=2g-i+1$ for each integer $1 \le i \le 2g$. \\
\quad Then we have
\begin{eqnarray*}
\cont_k(D_{12}^{\sgn})&=&\cont_k\left(\sum_{r=1}^{2g}e_r \otimes e_r^* \otimes (e_1 \wedge \cdots \wedge e_k)\right) \\
&=&\sum_{r=1}^{2g}\langle e_r^*,e_r \rangle e_1 \wedge \cdots \wedge e_k=(-2g)e_1 \wedge \cdots \wedge e_k.
\end{eqnarray*}
Moreover, 
\begin{eqnarray*}
\cont_k(D_{1j}^{\sgn})&=&\cont_k\left(\sum_{r=1}^{2g}\sum_{\sigma \in \mf{S}_k}\sgn(\sigma)e_r \otimes e_{\sigma(1)} \otimes e_{\sigma(2)} \otimes \cdots \otimes \stackrel{\stackrel{j}{\vee}}{e_r^*} \otimes \cdots \otimes e_{\sigma(k)}\right) \\
&=&\sum_{r=1}^{2g}\sum_{\sigma \in \mf{S}_k}\sgn(\sigma)\langle e_{\sigma(1)},e_r\rangle \otimes e_{\sigma(2)} \otimes \cdots \otimes \stackrel{\stackrel{j-2}{\vee}}{e_r^*} \otimes \cdots \otimes e_{\sigma(k)} \\
&=&\sum_{\sigma \in \mf{S}_k}\sgn(\sigma)e_{\sigma(2)} \otimes \cdots \otimes \stackrel{\stackrel{j-2}{\vee}}{e_{\sigma(1)'}^*} \otimes \cdots \otimes e_{\sigma(k)}\\
&=&-\sum_{\sigma \in \mf{S}_k}\sgn(\sigma)e_{\sigma(2)} \otimes \cdots \otimes \stackrel{\stackrel{j-2}{\vee}}{e_{\sigma(1)}} \otimes \cdots \otimes e_{\sigma(k)} \\
&=&-(e_1 \wedge \cdots \wedge e_k)\cdot s_1s_2 \cdots s_{j-3} \\
&=&(-1)^{j-2}e_1 \wedge \cdots \wedge e_k,
\end{eqnarray*}
and similarly,
\begin{eqnarray*}
\cont_k(D_{2j}^{\sgn})&=&\cont_k\left(\sum_{r=1}^{2g}\sum_{\sigma \in \mf{S}_k}\sgn(\sigma)e_{\sigma(1)} \otimes e_r \otimes e_{\sigma(2)} \otimes \cdots \otimes \stackrel{\stackrel{j}{\vee}}{e_r^*} \otimes \cdots \otimes e_{\sigma(k)}\right) \\
&=&\sum_{r=1}^{2g}\sum_{\sigma \in \mf{S}_k}\sgn(\sigma)\langle e_r,e_{\sigma(1)}\rangle \otimes e_{\sigma(2)} \otimes \cdots \otimes \stackrel{\stackrel{j-2}{\vee}}{e_r^*} \otimes \cdots \otimes e_{\sigma(k)} \\
&=&\sum_{\sigma \in \mf{S}_k}\sgn(\sigma)e_{\sigma(2)} \otimes \cdots \otimes \stackrel{\stackrel{j-2}{\vee}}{e_{\sigma(1)'}^*} \otimes \cdots \otimes e_{\sigma(k)}\\
&=&\sum_{\sigma \in \mf{S}_k}\sgn(\sigma)e_{\sigma(2)} \otimes \cdots \otimes \stackrel{\stackrel{j-2}{\vee}}{e_{\sigma(1)}} \otimes \cdots \otimes e_{\sigma(k)} \\
&=&(e_1 \wedge \cdots \wedge e_k)\cdot s_1s_2 \cdots s_{j-3} \\
&=&(-1)^{j-3}e_1 \wedge \cdots \wedge e_k.
\end{eqnarray*}
For $i \ge 3$, because of $g>k$, it is clear that $\cont_k((e_1 \wedge \cdots \wedge e_k)D_{ij})=0$.\\
\quad {} \\
\textbf{Step.4} \ We obtain $c_k(\varphi_{[1^k]}) \neq 0$. \\
\quad Indeed, for the natural surjection $\pr:H^{\otimes{k}}_\Q \to \mathcal{C}_{2g}^{\Q}(k)$, we have
\begin{eqnarray*}
&{}&c(\varphi_{[1^k]})\\
&=&2\left(
\begin{array}{l}
\displaystyle \sum_{j=1}^{k+1}(-1)^{\delta_{j \equiv 2,3 \, (\mathrm{mod} \, 4)}}{}_{\frac{k-1}{2}}C_{\lfloor\frac{j-1}{2}\rfloor}c_k(e_1\wedge \cdots \wedge e_kD_{1,1+j}) \\
\displaystyle +\sum_{j=1}^{k}(-1)^{\delta_{j \equiv 2,3 \, (\mathrm{mod} \, 4)}}{}_{\frac{k-1}{2}}C_{\lfloor\frac{j-1}{2}\rfloor}c_k(e_1\wedge \cdots \wedge e_kD_{2,2+j})
\end{array}\right) \\
&=&2\left(
\begin{array}{l}
\displaystyle -2g
+\sum_{j=2}^{k+1}(-1)^{\delta_{j \equiv 2, 3 \, (\mathrm{mod} \, 4)}}{}_{\frac{k-1}{2}}C_{\lfloor\frac{j-1}{2}\rfloor}(-1)^{j-1}\\
\displaystyle +\sum_{j=1}^{k}(-1)^{\delta_{j \equiv 2,3 \, (\mathrm{mod} \, 4)}}{}_{\frac{k-1}{2}}C_{\lfloor\frac{j-1}{2}\rfloor}(-1)^{j-1}
\end{array}
\right)\pr(e_1 \wedge \cdots \wedge e_k) \\
&=&2\left(
-2g+2+2\sum_{j=2}^{k}(-1)^{j-1+\delta_{j \equiv 2,3 \, (\mathrm{mod} \, 4)}}{}_{\frac{k-1}{2}}C_{\lfloor\frac{j-1}{2}\rfloor}
\right)\pr(e_1 \wedge \cdots \wedge e_k) \\
&=&2\left(
-2g-2+2\sum_{j=1}^{k+1}(-1)^{j-1+\delta_{j \equiv 2,3 \, (\mathrm{mod} \, 4)}}{}_{\frac{k-1}{2}}C_{\lfloor\frac{j-1}{2}\rfloor}
\right)\pr(e_1 \wedge \cdots \wedge e_k).
\end{eqnarray*}
Here, we claim that 
\[
\sum_{j=1}^{k+1}(-1)^{j-1+\delta_{j \equiv 2,3 \, (\mathrm{mod} \, 4)}}{}_{\frac{k-1}{2}}C_{\lfloor\frac{j-1}{2}\rfloor}=0.
\]
In fact, by setting $k=4K+1$, we have
\begin{eqnarray*}
&{}&\sum_{j=1}^{k+1}(-1)^{j-1+\delta_{j \equiv 2,3 \, (\mathrm{mod} \, 4)}}{}_{\frac{k-1}{2}}C_{\lfloor\frac{j-1}{2}\rfloor}\\
&=&\sum_{j=1}^{k+1}(-1)^{\delta_{j \equiv 0,3 \, (\mathrm{mod} \, 4)}}{}_{\frac{k-1}{2}}C_{\lfloor\frac{j-1}{2}\rfloor} \\
&=&\sum_{\substack{1 \le j \le k+1 \\[1pt] j : \mathrm{odd}}}(-1)^{\delta_{j \equiv 3 \, (\mathrm{mod} \, 4)}}{}_{\frac{k-1}{2}}C_{\lfloor\frac{j-1}{2}\rfloor}
  +\sum_{\substack{1 \le j \le k+1 \\[1pt] j : \mathrm{even}}}(-1)^{\delta_{j \equiv 0 \, (\mathrm{mod} \, 4)}}{}_{\frac{k-1}{2}}C_{\lfloor\frac{j-1}{2}\rfloor} \\
&=&\sum_{p=0}^{2K}(-1)^{\delta_{p \equiv 1 \, (\mathrm{mod} \, 2)}}{}_{2K}C_p+\sum_{q=1}^{2K+1}(-1)^{\delta_{q \equiv 0 \, (\mathrm{mod} \, 2)}}{}_{2K}C_{q-1} \\
&=&2\sum_{p=0}^{2K}(-1)^{\delta_{p \equiv 1 \, (\mathrm{mod} \, 2)}}{}_{2K}C_p=2(1-1)^{2K}=0.
\end{eqnarray*}
Hence, we conclude $c_k(\varphi_{[1^k]})=-4(g+1)\pr(e_1 \wedge \cdots \wedge e_k)$. \\
\quad Since $[L^{[1^k]}:H^{\otimes{k}}_{\Q}]=[L^{[1^k]}:\mathcal{C}_{2g}^{\Q}(k)]=1$ and $e_1 \wedge \cdots \wedge e_k$ is a maximal vector
with highest weight $(1^k)$ of $H_\Q^{\otimes{k}}$, we have $\pr(e_1 \wedge \cdots \wedge e_k) \neq 0$. \\
\quad {} \\
\textbf{Step.5} \ By Proposition \ref{prop:coker} and Proposition \ref{prop:mult}, the maximal vector $\varphi_{[1^k]}$ gives a unique irreducible component of $[1^k]$
in $\Coker{\tau_{k,\Q}^{\M}}$.

\vspace{0.5em}

This completes the proof of Theorem \ref{mt2}. \qed

\subsubsection{Outline of proof of Theorem \ref{mt1}}
To begin with, we can show
\[
(e_1^{\otimes{k}}D_{12})(1-s_2)(1-s_3s_2) \cdots (1-s_r \cdots s_3s_2)=\sum_{j=1}^{r}(-1)^{j-1}{}_{r-1}C_{j-1}(e_1^{\otimes{k}})D_{1,1+j}
\]
by using the induction on $r$.
Secondly, we have 
\[
(e_1^{\otimes{k}}D_{ij})s_{k+1}s_k \cdots s_2s_1=\left\{
\begin{array}{ll}
e_1^{\otimes{k}}D_{i+1,j+1}, \hspace{1em} & \mathrm{if} \,\,\, j \neq k+2, \\
-e_1^{\otimes{k}}D_{1,i+1}, & \mathrm{if} \,\,\, j=k+2.
\end{array}
\right.
\]
Hence we get an explicit formula
\[
(\omega \otimes e_1^{\otimes{k}}) \cdot \theta_P \cdot (1+\sigma_{k+2}+ \cdots +\sigma_{k+2}^{k+1})
=\sum_{i=1}^{k+1}\sum_{r=1}^{k-i+2}(-1)^{r-1}{}_{k}C_{r-1}(e_1^{\otimes{k}}) \cdot D_{i,i+r}.
\]
Thirdly, we have  
\[
\cont_k(e_1^{\otimes{k}}D_{ij})=\left\{
\begin{array}{ll}
(-2g)(e_1^{\otimes{k}})\hspace{1em} & \mathrm{if} \hspace{1em} i=1, \,\, j=2, \\
-(e_1^{\otimes{k}}) \hspace{1em} & \mathrm{if} \hspace{1em} i=1, \,\, j \ge 3, \\
(e_1^{\otimes{k}}) \hspace{1em} & \mathrm{if} \hspace{1em} i=2, \,\, j \ge 3, \\
0 \hspace{1em} & \mathrm{if} \hspace{1em} \text{otherwise},
\end{array}
\right.
\]
and $\pr(e_1^{\otimes{k}}) \neq 0$. Thus we obtain
\begin{eqnarray*}
c_k(\varphi_{[k]})&=&\sum_{j=1}^{k+1}(-1)^{j-1}{}_kC_{j-1} \ c_k(e_1^{\otimes{k}}D_{1j})+\sum_{j=1}^{k}(-1)^{j-1}{}_kC_{j-1} \ c_k(e_1^{\otimes{k}}D_{2j}) \\
&=&\left(-2g-\sum_{j=2}^{k+1}(-1)^{j-1}{}_kC_{j-1}+\sum_{j=1}^{k}(-1)^{j-1}{}_kC_{j-1}\right)\pr(e_1^{\otimes{k}}) \\
&=&\left(-2g+(-1)^{k+1}+\sum_{j=2}^k\left\{(-1)^j{}_kC_{j}+(-1)^{j-1}{}_kC_{j-1}\right\}+1\right)\pr(e_1^{\otimes{k}}) \\
&=&(2-2g)\pr(e_1^{\otimes{k}}) \neq 0.
\end{eqnarray*}
Therefore, by Proposition \ref{prop:coker} and Proposition \ref{prop:mult}, the maximal vector $\varphi_{[k]}$ gives
a unique irreducible component of $[k]$ in $\Coker{\tau_{k,\Q}^{\M}}$.

\vspace{0.5em}

This completes the proof of Theorem \ref{mt1}. \qed

\subsection{Problems for the Johnson cokernels}

Finally, we conclude by suggesting a problem for the Johnson cokernels of the mapping class group.

\vspace{0.5em}

By observing the table of $\mathrm{Coker}(\tau_{k,\Q}^{\M})$ for $1 \leq k \leq 4$ in Subsection {\rmfamily \ref{Ss-John}},
we see that $\mathrm{Coker}(\tau_{k,\Q}^{\M}) \cong \mathrm{Im}(c_k)$ for $1 \leq k \leq 4$ as an $\Sp(2g,\Q)$-module, where
$c_k : \h_{g,1}^{\Q}(k) \rightarrow \mathcal{C}_{2g}^{\Q}(k)$ is an $\Sp(2g,\Q)$-equivariant homomorphism defined in Subsection {\rmfamily \ref{sec:str}}.

\vspace{0.5em}

These facts let us motivate to determine whether $\mathrm{Coker}(\tau_{k,\Q}^{\M}) \cong \mathrm{Im}(c_k)$ for any $k \geq 1$, or not.
This is, however, not correct in general. In fact, for $k=6$
according to the description in \cite{Mo}, the Sp-invariant part of 
$\h_{g,1}(6)/\mathrm{Im}(\tau_{6,\Q}^{\M})$ is $\Q^3$.
On the other hand, that of $\mathcal{C}_{2g}^{\Q}(6)$ is $\Q^{\oplus 2}$. Hence we can not detect all of the $\Sp$-invariant part of $\h_{g,1}(6)$
using the map $c_6$. We have heard from Morita about these facts in his thourhtful e-mail. 

\vspace{0.5em}

Here we suggest a problem to determine the Sp-component of $\h_{g,1}(k)$ which can be detect the map $c_k$.
Namely,
\begin{prob}\label{Prob-1}
For any $k \geq 1$, determine the image $\mathrm{Im}(c_k)$ of $c_k$.
\end{prob}

\vspace{0.5em}

Let us consider a sequence of Sp-submodules of $\h_{g,1}^{\Q}$:
\[ \mathrm{Im}(\tau_{k,\Q}^{\M}) \subset \mathrm{Ker}(c_k) \subset \h_{g,1}^{\Q} \]
for each $k \geq 2$.
Problem {\rmfamily \ref{Prob-1}} is equivalent to a problem to detremine the Sp-module structure of the quotient $\h_{g,1}^{\Q}/\mathrm{Ker}(c_k)$.
We remark that from the description in \cite{Mo} as above,
for $k=6$, an irreducible module $[0]$ appears in $\mathrm{Ker}(c_6)/\mathrm{Im}(\tau_{6,\Q}^{\M})$ with multiplicity at least one.
(Morita told us this fact in his e-mail to us.) This shows $\mathrm{Im}(\tau_{k,\Q}^{\M}) \neq \mathrm{Ker}(c_k)$ in general.

\vspace{0.5em}

Let $(\h_{g,1}^{\Q})^{\mathrm{ab}}$ be the abelianization of $\h_{g,1}^{\Q}$ as a Lie algebra, and 
$[\h_{g,1}^{\Q}, \h_{g,1}^{\Q}]$ the kernel of the abelianization $\h_{g,1}^{\Q} \rightarrow (\h_{g,1}^{\Q})^{\mathrm{ab}}$.
We write $[\h_{g,1}^{\Q}, \h_{g,1}^{\Q}](k)$ for the degree $k$-part of $[\h_{g,1}^{\Q}, \h_{g,1}^{\Q}]$.
It is still open problem to determine the $\Sp$-module structure of $(\h_{g,1}^{\Q})^{\mathrm{ab}}$.
From Hain's result, see Theorem {\rmfamily \ref{T-Hain}}, we have
\[ \mathrm{Im}(\tau_{k,\Q}^{\M}) \subset [\h_{g,1}^{\Q}, \h_{g,1}^{\Q}](k) \subset \h_{g,1}^{\Q} \]
for each $k \geq 2$.
In \cite{Mo}, Morita constructed a surjective Lie algebra homomorphism
\[ \tau_{1,\Q}^{\M} \oplus \bigoplus_{k \geq 1} \mathrm{Tr}_{2k+1} : \h_{g,1}^{\Q} \rightarrow \Lambda^3 H_{\Q} \oplus \bigoplus_{k \geq 1} S^{2k+1} H_{\Q}  \]
using the Morita trace maps $\mathrm{Tr}_{2k+1}$, where the target is considered as an abelian Lie algebra.
Hence, the Morita obstructions can be detected by $\h_{g,1}^{\Q}(k)/\mathrm{Ker}(c_k)$ and $(\h_{g,1}^{\Q})^{\mathrm{ab}}$. 
Recently, J. Conant, M. Kassabov and K. Vogtmann announced there are new series in $(\h_{g,1}^{\Q})^{\mathrm{ab}}$
other than the Morita obstructions. 

\vspace{0.5em}

Then we have a problem:
\begin{prob}
Does there exist an irreducible Sp-module $L \subset \mathrm{Ker}(c_k)$ such that $L \not\subset [\h_{g,1}^{\Q}, \h_{g,1}^{\Q}](k)$?
For example, clarify whether or not the Conant-Kassabov-Vogtmann obstruction is contained in $\mathrm{Ker}(c_k)$.
\end{prob}

\subsection*{Acknowledgements}
\quad Both authors would like to thank Professor Shigeyuki Morita and Takuya Sakasai for valuable discussions about our results
and related topics and sincere encouragement for our research.
They would also like to thank J. Conant and M. Kassabov for the discussion about their recent works. \\
\quad They are supported by JSPS Research Fellowship for Young Scientists and the Global COE program at Kyoto University. \\
\quad In November 2004, at Okayama University, Professor Hiroaki Nakamura showed the second author (T. S.) his explicit calculation [NT], and
they discussed the multiplicities of $[1^k]$.
They checked that the multiplicities of $[1^{4k+1}]$ and
$[1^{4k+2}]$ in $\mf{h}_{g,1}(k)$ are exactly one for $1 \leq k \leq 3$. Nakamura communicated to him 
the possibilities that the multiplicities of $[1^{4k+1}]$ in $\mf{h}_{g,1}(k)$ is
exactly one for general $k$, and that they survive in the Johnson cokernels. The
second author would like to thank Professor Nakamura for these suggestions which motivated him to study the mapping class group of a surface.\\
\quad The first author (N. E.) would like to thank Kentaro Wada for his kindness guidance for dealing with idempotents and the Brauer algebras. He also would like to thank Yuichiro Hoshi for his comments on the arithmetic aspects of the mapping class groups.

\end{document}